\newtheorem{thm}{Theorem}[section]
\newtheorem{prop}[thm]{Proposition}
\newtheorem{lemma}[thm]{Lemma}
\newtheorem{cor}[thm]{Corollary}
\newtheorem{conj}[thm]{Conjecture}
\theoremstyle{definition}
\newtheorem{definition}[thm]{Definition}
\newtheorem{remark}[thm]{Remark}
\numberwithin{equation}{section}
\begin{document}

\title[Symplectomorphism group of $T^*(G_\mathbb{C}/B)$ and the braid group I]{Symplectomorphism group of $T^*(G_\mathbb{C}/B)$ and the braid group I: a homotopy equivalence for $G_\mathbb{C}=SL_3(\mathbb{C})$}
\author[Xin Jin]{Xin Jin}
\email{xinjin2013@gmail.com}

\address{Department of Mathematics, Northwestern University, Evanston, IL}

\subjclass[2000]{}

\date{}

\keywords{}

\begin{abstract}
For a semisimple Lie group $G_\mathbb{C}$ over $\mathbb{C}$, we study the homotopy type of the symplectomorphism group of the cotangent bundle of the flag variety and its relation to the braid group. We prove a homotopy equivalence between the two groups in the case of $G_\mathbb{C}=SL_3(\mathbb{C})$, under the $SU(3)$-equivariance condition on symplectomorphisms.
\end{abstract}

\maketitle

\tableofcontents

\section{Introduction}
For a semisimple Lie group $G_\mathbb{C}$ over $\mathbb{C}$, the cotangent bundle of the flag variety $T^*\mathcal{B}$ and its relation to the braid group have led to numerous active research  directions in geometric representation theory, algebraic geometry and symplectic topology.  The main driving force for these is due to the fruitful structures underlying the Springer resolutions and the adjoint quotient maps. 

This paper is an attempt to study the homotopy type of the symplectomorphism group of $T^*\mathcal{B}$ and its relation to the braid group, from a purely geometric point of view. We especially focus on the case of $G_\mathbb{C}=SL_3(\mathbb{C})$.

\subsection{Motivation and set-up}
The motivation is from the (strong) categorical braid group action on $D(\mathcal{B})$, the derived category of constructible sheaves on $\mathcal{B}$, by Deligne \cite{Deligne} and Rouquier\cite{Rouquier}. This action gives rise to $G_\mathbb{C}$-equivariant automorphisms of $D(\mathcal{B})$. One can translate the result to symplectic geometry via the Nadler-Zaslow correspondence\cite{NZ}. Recall that the Nadler-Zaslow correspondence 
gives a categorical equivalence between $D(X)$ and $DFuk(T^*X)$, the derived Fukaya category of $T^*X$, for any compact analytic manifold $X$ (see Section \ref{motiv} for more details). Since symplectomorphisms of $T^*\mathcal{B}$ with reasonable behavior near infinity induce automorphisms of $DFuk(T^*\mathcal{B})$, it is natural to form the following conjecture.
\begin{conj}\label{conj1}
The ``$G_\mathbb{C}$-equivariant" symplectomorphism group of $T^*\mathcal{B}$ is homotopy equivalent to the braid group. 
\end{conj}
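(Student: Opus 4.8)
\bigskip
\noindent\textbf{Proof strategy.}
Write $G=SU(3)$, $\mathcal B=SL_3(\mathbb C)/B=G/T$, and let $\mu\colon T^*\mathcal B\to\mathfrak g^*$ be the moment map of the canonical Hamiltonian $G$-action. Because $\mathfrak{su}(3)$ is semisimple there are no shifts, so every $\phi\in\mathrm{Symp}^{G}(T^*\mathcal B)$ satisfies $\mu\circ\phi=\mu$; hence $\phi$ preserves every fibre of $\mu$ — in particular the zero section $\mathcal B=\mu^{-1}(0)$ — and descends to the symplectic reductions $M_{\mathcal O}:=\mu^{-1}(\mathcal O)/G$ over coadjoint orbits $\mathcal O$. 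The plan is to exploit these constraints, together with the categorical braid action, to identify $\mathrm{Symp}^{G}(T^*\mathcal B)$ with the discrete group $B_3$ in three stages: (i) construct a homomorphism $\beta\colon B_3\to\pi_0\mathrm{Symp}^{G}(T^*\mathcal B)$; (ii) show $\beta$ is bijective; (iii) show every path component of $\mathrm{Symp}^{G}(T^*\mathcal B)$ is weakly contractible.

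\emph{Stage (i).} For $i=1,2$ let $p_i\colon\mathcal B\to\mathcal P_i$ be the $\mathbb{CP}^1$-fibration onto the maximal partial flag variety. Fixing a $G$-invariant metric on $\mathcal B$ whose $p_i$-fibres are totally geodesic round $2$-spheres, the fibrewise model Dehn twist built from the normalized geodesic flow defines a $G$-equivariant symplectomorphism $\tau_i$ of $T^*\mathcal B$ — a fibered Dehn twist along $p_i$ — which extends smoothly over the zero section, where it induces the simple reflection $s_i$ on $\mathcal B=G/T$. Since the $p_1$- and $p_2$-fibres through a common point of $\mathcal B$ meet only there, $\tau_1,\tau_2$ form the $A_2$-configuration of vanishing $\mathbb{CP}^1$'s, and I would verify $\tau_1\tau_2\tau_1\simeq\tau_2\tau_1\tau_2$ through $G$-equivariant symplectomorphisms by an explicit equivariant isotopy read off from the local geometry along the zero section — the symplectic counterpart of Seidel's braid relation for Dehn twists. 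This yields $\beta$, compatible with the surjection $B_3\to S_3=W$ recovered by restriction to the zero section.

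\emph{Stage (ii).} Injectivity: under the Nadler--Zaslow equivalence $DFuk(T^*\mathcal B)\simeq D(\mathcal B)$ each $\tau_i$ induces the Dehn/spherical twist functor, which on the constructible side generates the Deligne--Rouquier $B_3$-action, and the latter is faithful; since elements of the identity component act trivially on $DFuk$, the composition $B_3\to\pi_0\mathrm{Symp}^{G}(T^*\mathcal B)\to\mathrm{Auteq}(DFuk(T^*\mathcal B))$ is injective, hence so is $\beta$. Surjectivity: given $\phi$, its restriction to the zero section lies in $W$, so after composing with a suitable braid I may assume $\phi|_{\mathcal B}=\mathrm{id}$; it then remains to see that any such $\phi$ lies, up to equivariant-symplectic isotopy, in the image of the pure braid group $P_3=\ker(B_3\to W)=\langle\tau_1^2,\tau_2^2,\text{centre}\rangle$. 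For this I would compute the reductions explicitly — for $\mathcal O$ regular, $M_{\mathcal O}$ is a $2$-sphere of area depending on $\mathcal O$, degenerating over the non-regular locus along the exceptional $\mathbb{CP}^1$'s — and follow the induced data $\bar\phi_{\mathcal O}\in\mathrm{Symp}(M_{\mathcal O})$ together with its degenerations; a bookkeeping there pins $[\phi]$ down to the $\tau_i^2$'s and the centre. Combined with injectivity this gives $\pi_0\mathrm{Symp}^{G}(T^*\mathcal B)\cong B_3$.

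\emph{Stage (iii), and the expected obstacle.} Restriction to the zero section maps $\mathrm{Symp}^{G}(T^*\mathcal B)$ onto the discrete group $W$ with all point-inverses homeomorphic (via composition with a $\tau$) to $K:=\{\phi:\phi|_{\mathcal B}=\mathrm{id}\}$, so it suffices to show every path component of $K$ is weakly contractible. Away from the zero section $\phi\in K$ is governed by a contactomorphism of $ST^*\mathcal B$, and I would run a $G$-equivariant, $\mu$-compatible parametrized Moser/Weinstein argument to prove that the inclusion into $K$ of the ``conical'' subgroup generated by the fibrewise normalized geodesic flows on $T^*(G/T)$ is a homotopy equivalence, reducing the claim to an explicit computation for those flows. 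This last point — together with surjectivity in Stage (ii) — is where I expect the real difficulty to concentrate: unlike the $SL_2$ case of $T^*S^2$ there is no off-the-shelf description of $\mathrm{Symp}(T^*\mathcal B)$ to lean on, so one must genuinely control all higher homotopy, in particular ruling out contributions from $\mathrm{Symp}(M_{\mathcal O})$ (the $SO(3)$'s of the reduced $2$-spheres) once full $G$-equivariance and compatibility across the family of orbits are imposed. It is precisely here that the special structure of $SL_3$ enters, and I do not expect the argument, as stated, to extend to general $G_{\mathbb C}$.
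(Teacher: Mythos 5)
There is a genuine gap at the very outset: the group you call $\mathrm{Symp}^{G}(T^*\mathcal B)$ is not adequately specified. Lemma~\ref{inv_moment} gives $\mu\circ\phi=\mu$ for $G$-equivariant $\phi$, but this alone does not constrain the behavior of $\phi$ at infinity, and without such a constraint the group is far too large for the conjecture to be true. For instance, the time-one flow of any $G$-invariant Hamiltonian $h(|\xi|)$ is $G$-equivariant and $\mu$-preserving, yet for generic $h$ the graph $L_\phi$ has no isotropic boundary at $T^\infty(\mathcal B\times\mathcal B)$ and does not limit onto the Steinberg variety. The paper's definition requires $L_\phi^\infty\subset\mathcal Z^\infty$ (the $G_{\mathbb C}$-equivariance at infinity), and this is what forces $\phi$ to asymptotically preserve Springer fibers and makes the braid group appear. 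Your Stage (iii) alludes to ``a contactomorphism of $ST^*\mathcal B$'' but never imposes the Steinberg condition, which is the engine of the whole construction; in particular the entire asymptotic analysis in Section~3 of the paper, where the image $\beta_G(\phi)$ is read off from how $\phi_s$ acts on the triangle of subregular Springer fibers in the reduced space $M_{s\cdot p_3}$ as $s\to\infty$, has no analogue in your setup.

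Your Stage (ii) injectivity argument --- pass to $DFuk\simeq D(\mathcal B)$ via Nadler--Zaslow and invoke faithfulness of the Deligne--Rouquier $B_3$-action, together with the claim that the identity component of the symplectomorphism group acts trivially on $DFuk$ --- is exactly the ``alternative categorical construction'' that Remark~\ref{cat_braid} mentions and that the paper deliberately avoids. It is a legitimate route in principle, but it outsources two serious inputs: faithfulness of the categorical braid action, and a continuation argument showing that isotopic symplectomorphisms in the admissible class induce the same autoequivalence (the paper addresses the latter point via the $C^1$-topology of \eqref{C^k-top} and non-characteristic deformation). The paper instead builds the homomorphism in the opposite direction, $\beta_G:\mathrm{Sympl}_{\mathcal Z}^{G}(T^*\mathcal B)\to B_{\mathbf W}$, purely geometrically, so that injectivity on $\pi_0$ becomes part of proving $\ker\beta_G$ contractible, with no categorical input. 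Your Stage (ii) surjectivity (``a bookkeeping there pins $[\phi]$ down'') and Stage (iii) contractibility (``a $G$-equivariant, $\mu$-compatible parametrized Moser/Weinstein argument'') are precisely where all the technical content lives: Section~\ref{sec_triv} trivializes $\mu^{-1}(\overset{\circ}{W})$ in charts via Duistermaat--Heckman, handles the singular value $\mathbb R_+\cdot w_1$ by real blow-up and the explicit computation of $Sp(4)^{S^1}$ in Lemma~\ref{l.dvarphi}, and Section~\ref{main_pf} then reduces $\ker\beta_G^\sharp$ to a path space in $\Omega(\mathrm{Sympl}(S^2))$ whose contractibility uses $\mathrm{Sympl}(S^2)\simeq SO(3)$. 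None of that is derivable from the sketch as written; you have correctly identified where the difficulty concentrates, but you have not supplied the argument that the paper supplies there, and your surjectivity step additionally assumes without proof that once $\phi|_{\mathcal B}=\mathrm{id}$ the class $[\phi]$ lies in the image of $P_3$, which is exactly what requires the reduced-space analysis.

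Two smaller remarks. First, Conjecture~\ref{conj1} as stated concerns the full group $\mathrm{Sympl}_{\mathcal Z}(T^*\mathcal B)$; the paper's Theorem~\ref{intr_thm1} only establishes the homotopy equivalence for the subgroup $\mathrm{Sympl}_{\mathcal Z}^{G}(T^*\mathcal B)$ of genuinely $G$-equivariant symplectomorphisms, and the passage from the former to the latter (the first equivalence in Conjecture~\ref{conjG}) remains open; your proposal works with the $G$-equivariant group throughout, so it addresses the same reduced statement. Second, your fibered Dehn twists $\tau_i$ do agree with the paper's $\tau_\alpha$ of \eqref{Dehn} in spirit, but the paper additionally verifies the crucial Lemma that $\tau_\alpha$ is $G_{\mathbb C}$-equivariant at infinity; this is not automatic from $G$-equivariance and is again invisible without the Steinberg condition in your definition.
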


To rigorously state the conjecture, one has to give a definition of ``$G_\mathbb{C}$-equivariance" on symplectomorphisms. The global $G_{\mathbb{C}}$-equivariance condition on a symplectomorphism would force it to be the identity. The reason is the following.  
The Springer resolution (see (\ref{Springer}) for the definition)
$$\mu_\mathbb{C}: T^*\mathcal{B}\rightarrow \mathcal{N},$$
gives a $G_\mathbb{C}$-equivariant isomorphism from the dense $G_\mathbb{C}$-orbit in $T^*\mathcal{B}$ to $\mathcal{N}_{reg}$, the orbit of regular nilpotent elements in $\mathcal{N}$. Suppose $\varphi$ is a $G_\mathbb{C}$-equivariant symplectomorphism, then the graph of $\varphi|_{\mu_\mathbb{C}^{-1}(\mathcal{N}_{reg})}$ is a complex Lagrangian submanifold in $T^*\mathcal{B}^-\times T^*\mathcal{B}$, hence the graph of $\varphi$ is a closed complex Lagrangian. Therefore, $\varphi$ preserves the holomorphic symplectic form and then preserves $\mu_\mathbb{C}$ (see Lemma \ref{inv_moment}), so we can conclude that $\varphi=\mathrm{id}$. 

A natural replacement of the global $G_\mathbb{C}$-equivariance condition is to require $\varphi$ to be \emph{$G_\mathbb{C}$-equivariant at infinity}. It can be formulated via the Lagrangian correspondence $L_\varphi$, i.e. the graph of $\varphi$, in $T^*\mathcal{B}^-\times T^*\mathcal{B}\simeq T^*(\mathcal{B}\times\mathcal{B})$ and its relation to the Steinberg variety $\mathcal{Z}$. Recall that the Steinberg variety $\mathcal{Z}$ is the union of the conormal varieties to the $G_\mathbb{C}$-orbits in $\mathcal{B}\times \mathcal{B}$ under the diagonal action. Using the $\mathbb{R}_+$-action on $T^*(\mathcal{B}\times\mathcal{B})$, one can projectivize the cotangent bundle and get a compact symplectic manifold with a contact boundary. We denote the boundary by 
 $T^\infty(\mathcal{B}\times\mathcal{B})$, and for any Lagrangian $L$ in the cotangent bundle, we use $L^\infty$ to denote for $\overline{L}\cap T^\infty(\mathcal{B}\times\mathcal{B})$. Then we make the following definition (see Section \ref{G_Sympl} for more discussions).
\begin{definition}\label{Def1}
A symplectomorphism $\varphi$ of $T^*\mathcal{B}$ is \emph{$G_\mathbb{C}$-equivariant at infinity} if $L_\varphi^\infty\subset \mathcal{Z}^\infty$. We denote by 
$\mathrm{Sympl}_\mathcal{Z}(T^*\mathcal{B})$ for the group of symplectomorphisms that are  $G_\mathbb{C}$-equivariant at infinity.
\end{definition}
We are content with this definition since the Steinberg variety is one of the key players in geometric representation theory, and  this definition builds a natural bridge between geometric representation theory and symplectic geometry. 

For example, if $G_\mathbb{C}=SL_2(\mathbb{C})$, then the symplectomorphisms that we are considering are the compactly supported ones. For general $G_\mathbb{C}$, $\varphi$ has to preserve the Springer fibers, i.e. fibers of $\mu_\mathbb{C}$, at infinity. If we fix a maximal compact subgroup $G$ in $G_\mathbb{C}$ (e.g. $SU(n)$ inside $SL_n(\mathbb{C})$ and identify $\mathcal{B}$ with $G/T$), then we can consider the subgroup $\mathrm{Sympl}_\mathcal{Z}^G(T^*\mathcal{B})$ of (genuine) $G$-equivariant symplectomorphisms. We make the following conjecture.
\begin{conj}\label{conjG}
There is a sequence of homotopy equivalences 
$$\mathrm{Sympl}_\mathcal{Z}(T^*\mathcal{B})\simeq \mathrm{Sympl}_\mathcal{Z}^G(T^*\mathcal{B})\simeq B_\mathbf{W}.$$
\end{conj}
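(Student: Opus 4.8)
The plan is to establish Conjecture \ref{conjG} for $G_\mathbb{C}=SL_3(\mathbb{C})$ under the $SU(3)$-equivariance hypothesis, i.e.\ to prove $\mathrm{Sympl}_\mathcal{Z}^{SU(3)}(T^*\mathcal{B})\simeq B_\mathbf{W}$, where $\mathbf{W}=S_3$ is the Weyl group and $B_\mathbf{W}=\langle\sigma_1,\sigma_2\mid\sigma_1\sigma_2\sigma_1=\sigma_2\sigma_1\sigma_2\rangle$ is the $3$-strand braid group; the comparison with the non-equivariant group is addressed at the end. Since $B_\mathbf{W}$ is discrete, the target is equivalent to: (i) the identity component of $\mathrm{Sympl}_\mathcal{Z}^{SU(3)}(T^*\mathcal{B})$ has vanishing homotopy groups, and (ii) there is an isomorphism $\rho\colon B_\mathbf{W}\xrightarrow{\ \sim\ }\pi_0\,\mathrm{Sympl}_\mathcal{Z}^{SU(3)}(T^*\mathcal{B})$. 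To build $\rho$: the two $SU(3)$-invariant $\mathbb{P}^1$-fibrations $\pi_i\colon\mathcal{B}=SU(3)/T\to\mathcal{P}_i$ ($i=1,2$) onto the partial flag varieties give $SU(3)$-invariant coisotropic submanifolds $\pi_i^*T^*\mathcal{P}_i\subset T^*\mathcal{B}$ whose null-foliation reductions are $\mathbb{P}^1$-bundles with Lagrangian-sphere fibres; the associated fibred generalized Dehn twists $\tau_1,\tau_2$ are $SU(3)$-equivariant, and inspecting their graphs gives $L_{\tau_i}^\infty\subset\mathcal{Z}^\infty$, so $\tau_i\in\mathrm{Sympl}_\mathcal{Z}^{SU(3)}(T^*\mathcal{B})$. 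One then verifies the braid relation $\tau_1\tau_2\tau_1\simeq\tau_2\tau_1\tau_2$ inside this group, using the local model over a rank-one Levi that underlies the $A_2$ case in the work of Seidel and Khovanov--Seidel, carried out $SU(3)$-equivariantly; this defines $\rho$ by $\sigma_i\mapsto[\tau_i]$.

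For injectivity of $\rho$, pass to the Fukaya category: by the Nadler--Zaslow correspondence \cite{NZ} each $\varphi\in\mathrm{Sympl}_\mathcal{Z}^{SU(3)}(T^*\mathcal{B})$ induces an autoequivalence of $DFuk(T^*\mathcal{B})\simeq D(\mathcal{B})$, intertwined with the categorical braid action of \cite{Deligne,Rouquier}, and $\rho$ is compatible with the standard map from $B_\mathbf{W}$ to autoequivalences; since that map is faithful (Khovanov--Seidel in type $A$, Rouquier in general), so is $\rho$ — concretely, distinct braid words send a cotangent fibre to Lagrangians with distinct Floer cohomology. For surjectivity of $\rho$, study behaviour at infinity: $L_\varphi^\infty$ is a connected Legendrian inside the reducible Legendrian $\mathcal{Z}^\infty$, whose strata are indexed by $\mathbf{W}$ with incidences governed by the Bruhat order, and the way $L_\varphi^\infty$ threads this stratification determines an ``asymptotic word'' $\beta(\varphi)\in B_\mathbf{W}$; after replacing $\varphi$ by $\tau_{\beta(\varphi)}^{-1}\circ\varphi$ one arranges $L_\varphi^\infty=\mathcal{Z}_e^\infty$, i.e.\ $\varphi=\mathrm{id}$ near $T^\infty(\mathcal{B}\times\mathcal{B})$. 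In this way both surjectivity of $\rho$ and statement (i) are reduced to the single assertion that the group $\mathrm{Sympl}_c^{SU(3)}(T^*\mathcal{B})$ of compactly supported $SU(3)$-equivariant symplectomorphisms has vanishing homotopy groups.

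That assertion I would prove by an $SU(3)$-equivariant parametrized Moser argument, whose ingredients are: contractibility of the space of $SU(3)$-invariant compatible almost complex structures on $T^*\mathcal{B}$; the rigidity of the complex moment map $\mu_\mathbb{C}$ at infinity (Lemma \ref{inv_moment} and its relatives), which pins down an equivariant symplectomorphism on the dense $SU(3)$-orbit up to the $SU(3)$-action; and a cut-off controlled by the real $SU(3)$-moment map $\mu\colon T^*\mathcal{B}\to\mathfrak{su}(3)^*$ together with Weinstein's symplectic cross-section normal form near coadjoint orbits, which promotes a pointwise contraction to a contraction of the group. The vanishing of $\pi_0$ here reflects the fact that $T^*(SL_3(\mathbb{C})/B)$ carries no $SU(3)$-invariant compact Lagrangian sphere, so no Dehn twist beyond the fibred ones already built into the $\tau_i$ can occur.

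The step I expect to be the main obstacle is exactly this control of $\mathrm{Sympl}_c^{SU(3)}(T^*\mathcal{B})$, together with making the asymptotic word $\beta(\varphi)$ rigorous: both demand a hands-on analysis of $SU(3)$-equivariant symplectic topology near infinity in the $12$-dimensional manifold $T^*(SL_3(\mathbb{C})/B)$ — the geometry of $\mathcal{Z}^\infty$, the cross-section structure of $\mu$, and their interaction — and I do not expect a shortcut. Finally, the first equivalence in Conjecture \ref{conjG}, $\mathrm{Sympl}_\mathcal{Z}(T^*\mathcal{B})\simeq\mathrm{Sympl}_\mathcal{Z}^{SU(3)}(T^*\mathcal{B})$, would additionally require showing this inclusion is a weak equivalence; I would approach it via an equivariant tubular-neighbourhood and averaging argument over the contractible space of $SU(3)$-invariant data, but that lies outside the present reduction and is deferred.
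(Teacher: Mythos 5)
Your high-level architecture — fibred Dehn twists generate, identity component is contractible — matches the paper's, but both the detailed route and the completeness differ, and one step as stated would fail. The paper builds the geometric map in the opposite direction, $\beta_G\colon\mathrm{Sympl}_\mathcal{Z}^{SU(3)}(T^*\mathcal{B})\to B_\mathbf{W}$, defined on the \emph{whole group} (not just $\pi_0$) by slicing $\mu^{-1}$ over a ray in the Weyl chamber, restricting the induced symplectomorphism of the reduced space $M_{s\cdot p_n}$ to a disc around the projection of a subregular Springer fibre, and reading off the braid class of the resulting disc automorphism as $s\to\infty$. Injectivity is never argued through the Fukaya category: the paper deliberately stays geometric (Remark \ref{cat_braid}) and instead proves $\ker\beta_G$ is contractible, which simultaneously gives injectivity on $\pi_0$ \emph{and} vanishing of all higher homotopy of the identity component. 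Your appeal to faithfulness of the categorical braid action on $D(\mathcal{B})$ would only settle $\pi_0$-injectivity, and is logically heavier than what the paper needs. Also, extracting an ``asymptotic word'' $\beta(\varphi)\in B_\mathbf{W}$ from the position of $L_\varphi^\infty$ inside $\mathcal{Z}^\infty$ is not available as stated: $\mathcal{Z}^\infty$ is stratified by $\mathbf{W}$, not by $B_\mathbf{W}$, so you cannot read off a braid word from the static Legendrian $L_\varphi^\infty$ — what produces the braid is precisely the dynamics of $\varphi_s$ on the reduced spaces as $s\to\infty$, which is what $\beta_G$ packages.

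The more serious gap is the reduction to $\mathrm{Sympl}^{SU(3)}_c(T^*\mathcal{B})$. The condition $L_\varphi^\infty=\mathcal{Z}_e^\infty$ says $\varphi$ is \emph{asymptotic} to the identity, not that $\varphi$ equals the identity outside a compact set, and in fact $\ker\beta_G$ contains non-compactly-supported maps (e.g.\ $G$-equivariant Hamiltonian rotations along $T$-orbits whose amplitude decays at infinity). Moreover, the proposed equivariant parametrized Moser/Gromov argument does not engage with the two places where the homotopy-theoretic content actually lives: the walls of the Weyl chamber, where the isotropy jumps to $U(2)$ (handled in the paper via the Weinstein normal form of Lemma \ref{w_0} and the trivialization of Corollary \ref{triv_bdry}); and the singular ray $\mathbb{R}_{>0}\cdot w_1$ in the interior, where the $T$-action has stabilizers (handled via the real blow-up construction in Section \ref{blup} and the analysis of $Sp(4)^{S^1}$). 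The paper's contraction of $\ker\beta_G$ hinges on trivializing reduced spaces over the regular cones by Duistermaat--Heckman, desingularizing along $\mathbb{R}_{>0}\cdot w_1$, and reducing to explicit path spaces into $\mathrm{Sympl}(S^2)\simeq SO(3)$ with prescribed boundary behaviour — none of which is captured by contractibility of invariant almost complex structures or Weinstein cross-sections. Finally, as in the paper, the first equivalence $\mathrm{Sympl}_\mathcal{Z}\simeq\mathrm{Sympl}_\mathcal{Z}^{SU(3)}$ remains open: the paper states Conjecture \ref{conjG} and proves only the second equivalence (Theorem \ref{intr_thm1}(2)), so your deferral of that step is consistent with the source.
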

Our main results provide evidence for this conjecture.

\subsection{Main Theorem}

\begin{thm}\label{intr_thm1}
(1) There is a natural surjective group homomorphism $$\beta_G: \mathrm{Sympl}_\mathcal{Z}^G(T^*\mathcal{B})\rightarrow B_{\mathbf{W}}, \text{ for }G=SU(n).$$\\
(2) $\beta_G$ is a homotopy equivalence for $G=SU(2), SU(3)$.
\end{thm}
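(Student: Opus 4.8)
I would treat the two assertions separately: (1) is largely formal given the Nadler--Zaslow dictionary together with a supply of symplectomorphisms realizing the braid generators, while (2) is the substantive part and would be proved by reduction to lower rank. For (1), the plan is to produce, for each simple reflection $s_i$ of $\mathbf{W}$, a $G$-equivariant symplectomorphism $\sigma_i\in\mathrm{Sympl}_\mathcal{Z}^G(T^*\mathcal{B})$ as a \emph{fibered Dehn twist}: the minimal parabolic $P_i\supset B$ gives a $\mathbb{P}^1$-fibration $\pi_i\colon\mathcal{B}\to G/P_i$, the associated partial Springer resolution exhibits $T^*\mathcal{B}$, near the conormal of the non-diagonal part of $\mathcal{B}\times_{G/P_i}\mathcal{B}$, as a family of resolved $A_1$-surfaces over $G/P_i$, and $\sigma_i$ is taken to be the monodromy of this family (equivalently, the fiberwise generalized Dehn twist built from a $G$-invariant metric on the $\mathbb{P}^1$-fibers). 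Since the construction uses only $G$-invariant data, $\sigma_i$ is $G$-equivariant; since the twisting region is the conormal of the $s_i$-orbit closure, $L_{\sigma_i}^\infty\subseteq\mathcal{Z}_e^\infty\cup\mathcal{Z}_{s_i}^\infty\subseteq\mathcal{Z}^\infty$, so $\sigma_i\in\mathrm{Sympl}_\mathcal{Z}^G$. One then checks, up to isotopy inside $\mathrm{Sympl}_\mathcal{Z}^G$, the Artin relations $\sigma_i\sigma_j\sigma_i\simeq\sigma_j\sigma_i\sigma_j$ (and $\sigma_i\sigma_j\simeq\sigma_j\sigma_i$ for non-adjacent $i,j$), most cleanly by recognizing both sides as monodromies in a common family over the complexified Weyl arrangement. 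In the reverse direction $\beta_G$ is defined via Nadler--Zaslow: $\varphi\in\mathrm{Sympl}_\mathcal{Z}^G$ induces an autoequivalence $\varphi_*$ of $DFuk(T^*\mathcal{B})$, and under $DFuk(T^*\mathcal{B})\simeq D(\mathcal{B})$ the condition $L_\varphi^\infty\subseteq\mathcal{Z}^\infty$ forces $\varphi_*$ to preserve the filtration of singular supports by the $\mathcal{Z}_w$, hence to lie in the image of the Deligne--Rouquier action; identifying this image with $B_\mathbf{W}$ and setting $\beta_G(\varphi):=\varphi_*$ gives a group homomorphism (continuity makes it factor through $\pi_0$), and since $\beta_G(\sigma_i)$ is the Rouquier/spherical twist $T_{s_i}$ and the $T_{s_i}$ generate $B_\mathbf{W}$, it is surjective.

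For (2) with $G=SU(2)$: here $\mathcal{B}=\mathbb{P}^1$ and, as noted in the introduction, $\mathrm{Sympl}_\mathcal{Z}^{SU(2)}(T^*\mathbb{P}^1)=\mathrm{Symp}_c^{SU(2)}(T^*S^2)$, with $SU(2)$ acting through $PSU(2)=SO(3)$ in cohomogeneity one (orbit types: the zero section $S^2$ and the cosphere bundles $\cong\mathbb{RP}^3$). Because $(\mathfrak{su}(2)^*)^{SU(2)}=0$, every equivariant symplectomorphism $\varphi$ preserves the moment map $\mu$ and hence the radial function $r=|\mu|$; moreover every symplectic reduced space $\mu^{-1}(\xi)/\mathrm{Stab}(\xi)$ is a single point, so $\varphi$ acts on each level $\mu^{-1}(\xi)$ by translation along the residual $SO(2)$, i.e. by a normalized-geodesic rotation through an angle $\theta(r)$ depending only on $r$. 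Compact support forces $\theta\equiv0$ for $r\gg0$, and smoothness across the zero section forces $\theta(0^+)\in2\pi\mathbb{Z}$, so the space of admissible functions $\theta$ deformation retracts onto $\mathbb{Z}$ (via $\theta(0^+)/2\pi$). Hence $\mathrm{Sympl}_\mathcal{Z}^{SU(2)}(T^*\mathbb{P}^1)$ is weakly equivalent to $\mathbb{Z}=B_\mathbf{W}$, generated by $\sigma_1$ (the Dehn twist), and $\beta_{SU(2)}$ records exactly this integer, so it is a homotopy equivalence.

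For (2) with $G=SU(3)$ --- the main content --- $\mathcal{B}=SL_3(\mathbb{C})/B$ is the flag $3$-fold and $B_\mathbf{W}$ is the $3$-strand braid group, and the plan is to peel off one $\mathbb{P}^1$-direction at a time. Here too $SU(3)$-equivariance forces every $\varphi$ to preserve $\mu_\mathbb{R}\colon T^*\mathcal{B}\to\mathfrak{su}(3)^*$; an orbit-dimension count shows the only $SU(3)$-orbit of dimension $\le\dim_\mathbb{R}\mathcal{B}$ is the zero section $\mathcal{B}_0$, so $\varphi(\mathcal{B}_0)=\mathcal{B}_0$, and the behavior of $L_\varphi^\infty$ near $\mathcal{B}_0$ together with the $\mathcal{Z}^\infty$-condition pins down $\varphi|_{\mathcal{B}_0}$. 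Composing with a suitable word $\sigma_w$ in the generators of Part (1) reduces to the case $L_\varphi^\infty=\mathcal{Z}_e^\infty$; at that point $\pi_0$ already surjects onto $B_\mathbf{W}$, with the residual ambiguity controlled by $SU(3)$-equivariant symplectomorphisms that fix $\mathcal{B}_0$ and are trivial near infinity. For those I would: (i) show that passing to the germ along $\mathcal{B}_0$ (equivalently, the action on $SU(3)$-equivariant Lagrangians near $\mathcal{B}_0$) is a fibration over a weakly contractible base, using equivariant Weinstein and Moser theorems together with rigidity coming from the $J$-holomorphic spheres of the two $\mathbb{P}^1$-rulings of $\mathcal{B}$, the subregular Springer fibers, and the moment-map constraint; and (ii) identify the fiber as a bundle over $G/P_i$ whose fiberwise structure is of the $\mathrm{Symp}_c^{SU(2)}(T^*S^2)$-type treated above, so that its homotopy type follows from the rank-one computation together with the braid relations.

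The hard part will be the contractibility statements behind (i) and (ii): proving that the relevant spaces of $SU(3)$-equivariant exact Lagrangians in $T^*(SL_3(\mathbb{C})/B)$ --- those isotopic to $\mathcal{B}_0$ and equivariant at infinity --- are connected and weakly contractible, equivalently that no exotic $SU(3)$-equivariant symplectomorphisms exist beyond those detected by $\beta_{SU(3)}$. I expect this to demand a delicate, geometry-specific analysis: an equivariant Lagrangian $h$-principle or explicit equivariant isotopies adapted to this flag geometry; Gromov-compactness and automatic-transversality input for the holomorphic spheres of the two rulings of $\mathcal{B}$; and the rigidity of the nilpotent-orbit stratification of $T^*\mathcal{B}$ under $\mu_\mathbb{C}$. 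One also needs injectivity of $\beta_{SU(3)}$ on $\pi_0$ --- that $\varphi_*=\mathrm{id}$ forces $\varphi$ into the identity component --- which I would obtain from the Floer-theoretic faithfulness of the categorical braid action combined with the contractibility results. I expect the overwhelming bulk of the paper to consist of exactly this $SU(3)$ analysis.
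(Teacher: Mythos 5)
Your proposal diverges from the paper in two substantive ways, and both divergences open genuine gaps.

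For part (1), you define $\beta_G$ by routing through Nadler--Zaslow: $\varphi\mapsto\varphi_*\in\mathrm{Aut}(DFuk(T^*\mathcal{B}))\cong\mathrm{Aut}(D(\mathcal{B}))$, then argue that the constraint $L_\varphi^\infty\subseteq\mathcal{Z}^\infty$ forces $\varphi_*$ into the image of the Deligne--Rouquier action. Two serious issues here. First, knowing the kernel of $\varphi_*$ has singular support in $\mathcal{Z}$ does not by itself place $\varphi_*$ in the braid group's image --- one would need a classification of autoequivalences whose integral kernels have $\mathcal{Z}$-supported singular support, which is not available off the shelf. Second, even granting that, the Deligne--Rouquier action gives a homomorphism from $B_\mathbf{W}$; identifying its image with $B_\mathbf{W}$ is precisely faithfulness of the categorical braid action, a nontrivial theorem you are silently importing. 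The paper instead constructs $\beta_G$ entirely geometrically: one restricts $\varphi$ to reduced spaces $M_{s\cdot p_n}$ over the ray through $p_n=\mathrm{diag}(1,-1,0,\dots,0)$, cuts a $2$-dimensional surface $\Sigma$ transverse to the family of projected subregular Springer fibers, and observes that as $s\to\infty$ the induced map on $\Sigma$ is a compactly supported mapping class of a disc with $n$ marked points, i.e.\ directly an element of $B_n$. The paper's Remark~3.4 explicitly flags the comparison with the categorical construction as conditional on Conjecture~1.3, so it is not a route it takes.

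For part (2), your $SU(2)$ argument is essentially the paper's. But your $SU(3)$ plan is where the real gap lies: the tools you invoke --- equivariant Lagrangian $h$-principle, Gromov compactness, automatic transversality for the $\mathbb{P}^1$-rulings --- are not present in the paper and there is no indication they would close the argument. What the paper actually does, and what your sketch does not anticipate, is: (i) decompose the Weyl chamber $W$ into subcones and use the Duistermaat--Heckman normal form plus equivariant Weinstein/Moser to trivialize the reduced spaces over these subcones as principal $T$-bundles with explicit symplectic forms (Lemma~4.6, Corollary~4.7, Proposition~4.8); (ii) handle the singular wall $\mathbb{R}_{>0}\cdot w_1$ of $\mu$ by a real blow-up that desingularizes the residual $S^1$-action, and analyze the linearized equivariant symplectic group $Sp(4)^{S^1}$ at the fixed points (Section~4.2.3); (iii) identify symplectomorphisms of the reduced spaces fixing the three marked points with a space of parametrized triangles in $\mathbb{P}^1$, and show that space is contractible (Lemma~4.12); (iv) after a deformation retraction to maps supported away from the zero section (Lemma~4.15), exhibit $\ker\beta_G^\sharp$ through a chain of fibrations with contractible fibers/base, ultimately reducing to a path space of $\Omega(\mathrm{Sympl}(S^2))$ which is contractible via Smale's $\mathrm{Sympl}(S^2)\simeq SO(3)$. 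The flavor is purely soft-symplectic and moment-map-theoretic; the holomorphic-curve and $h$-principle machinery you propose is never needed, and I do not see how it would by itself produce the required contractibility of $\ker\beta_G$. Your plan for $SU(3)$ is, at this stage, a statement of hopes rather than a proof.
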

The construction of $\beta_G$ is purely geometric as apposed to the alternative categorical construction (see Remark \ref{cat_braid} below). As mentioned before, every $\varphi\in\mathrm{Sympl}^G_\mathcal{Z}(T^*\mathcal{B})$ must preserve each reduced space of the Hamiltonian $G$-action. So the problem of studying the (weak) homotopy type of $\mathrm{Sympl}^G_\mathcal{Z}(T^*\mathcal{B})$ can be roughly reduced to the study of homotopy classes (and homotopy between homotopies and so on) of the symplectomorphisms on the Hamiltonian reductions over a Weyl chamber $W$ in the dual of the Cartan subalgebra $\mathfrak{t}^*\cong i\mathfrak{t}$, with some further restrictions at infinity.  

For $n=2$, the reduced space over each element $p\in W$ is a point. However, we have to divide them into two cases. If $p\neq 0$, then $\mu^{-1}(p)$ is an orbit of the $T$-action, so $\varphi|_{\mu^{-1}}(p)$ is a rotation and corresponds to an element in $S^1$. If $p=0$, then the restriction of $\varphi$ on $\mu^{-1}(0)=T^*_\mathcal{B}\mathcal{B}$ is a $G$-equivariant automorphism of $G/T$. Since $\mathrm{Aut}^G(G/T)\cong \mathbf{W}$, $\varphi|_{\mu^{-1}(0)}$ corresponds to an element in $\mathbf{W}\cong\mathbb{Z}_2$. Note that the circles over the interior of $W$ approach the zero section to a big circle, we see that $\varphi$ corresponds to a path in $S^1$ starting from $\pm 1$ and ending at $1$, and that $\varphi$ is a (iterated) Dehn twist (see Figure \ref{Dehn_twist_SL_2}). It is then easy to see that $\mathrm{Sympl}_\mathcal{Z}^G(T^*\mathcal{B})$ is homotopy equivalent to $B_2=\mathbb{Z}$, which is homotopy equivalent to $\mathrm{Sympl}^c(T^*S^2)$, by the result of Seidel\cite{Seidel}.

For $n\geq 3$, things are more interesting and we will not have all $\varphi$ being compactly supported. The picture in the case of $G=SU(3)$ is very illustrating. Let $\mu: T^*\mathcal{B}\rightarrow i\mathfrak{su}(3)\cong \mathfrak{su}(3)^*$ be the moment map. Along the ray generated by $p=\mathrm{diag}(1,0,-1)\in i\mathfrak{t}$, the reduced spaces are all $S^2$ with three distinguished points corresponding to the singular loci of $\mu$. There are exactly two types Springer fibers contained in $\mu^{-1}(p)$: one is the Springer fiber over a regular nilpotent element (a $3\times 3$-nilpotent matrix having one single Jordan block in its Jordan normal form), which is just a point; the other is the Springer fiber over a subregular nilpotent element (a $3\times 3$-nilpotent matrix having two Jordan blocks), which is the wedge of two $2$-spheres. The union of subregular Springer fibers in $\mu^{-1}(p)$ projects to two line segments connecting the three special points in the reduced space $M_p$ (see Figure \ref{Dehn_twist_SL_3}). Now we draw a small disc $\mathcal{U}_s$ around these line segments in $\mu^{-1}(s\cdot p)$ for each $s>0$, which forms a  $\mathbb{R}_+$-invariant family. Let $\varphi_s$ be the induced map on $M_{s\cdot p}$ by $\varphi$. As $s\rightarrow\infty$, $\varphi_s$ tends to fix all the points outside of $\mathcal{U}_s$, hence after a small homotopy near $\partial (\mathcal{U}_s)$, $\varphi_s|_{\mathcal{U}_s}$ becomes a symplectomorphism of $\mathcal{U}_s$, which permutes the three marked points and fixes each point on the boundary. Therefore, it gives rise to an element in $B_3$, the braid group of three strands.
\begin{figure}
\centering
  \begin{overpic}[width=3.5in]{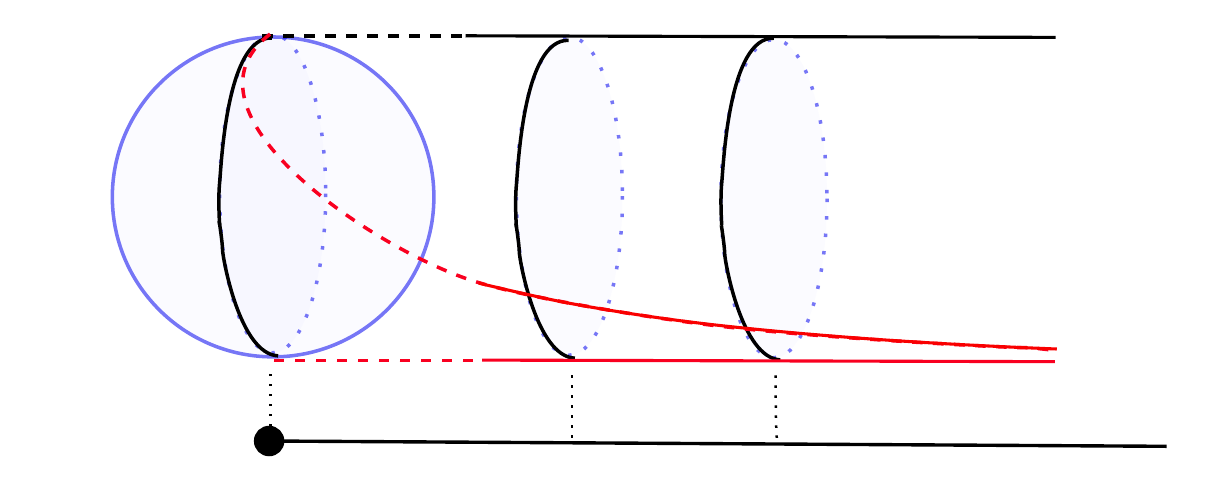}
   \put(100,0){$W$}
\end{overpic}
\caption{The fibers of $\mu$ over a Weyl chamber $W$ for $G=SU(2)$ and the Dehn twist. The Dehn twist moves the lower straight red line to the upper red curve. }\label{Dehn_twist_SL_2}
\end{figure}

\begin{figure}
\centering
  \begin{overpic}[width=3.5in]{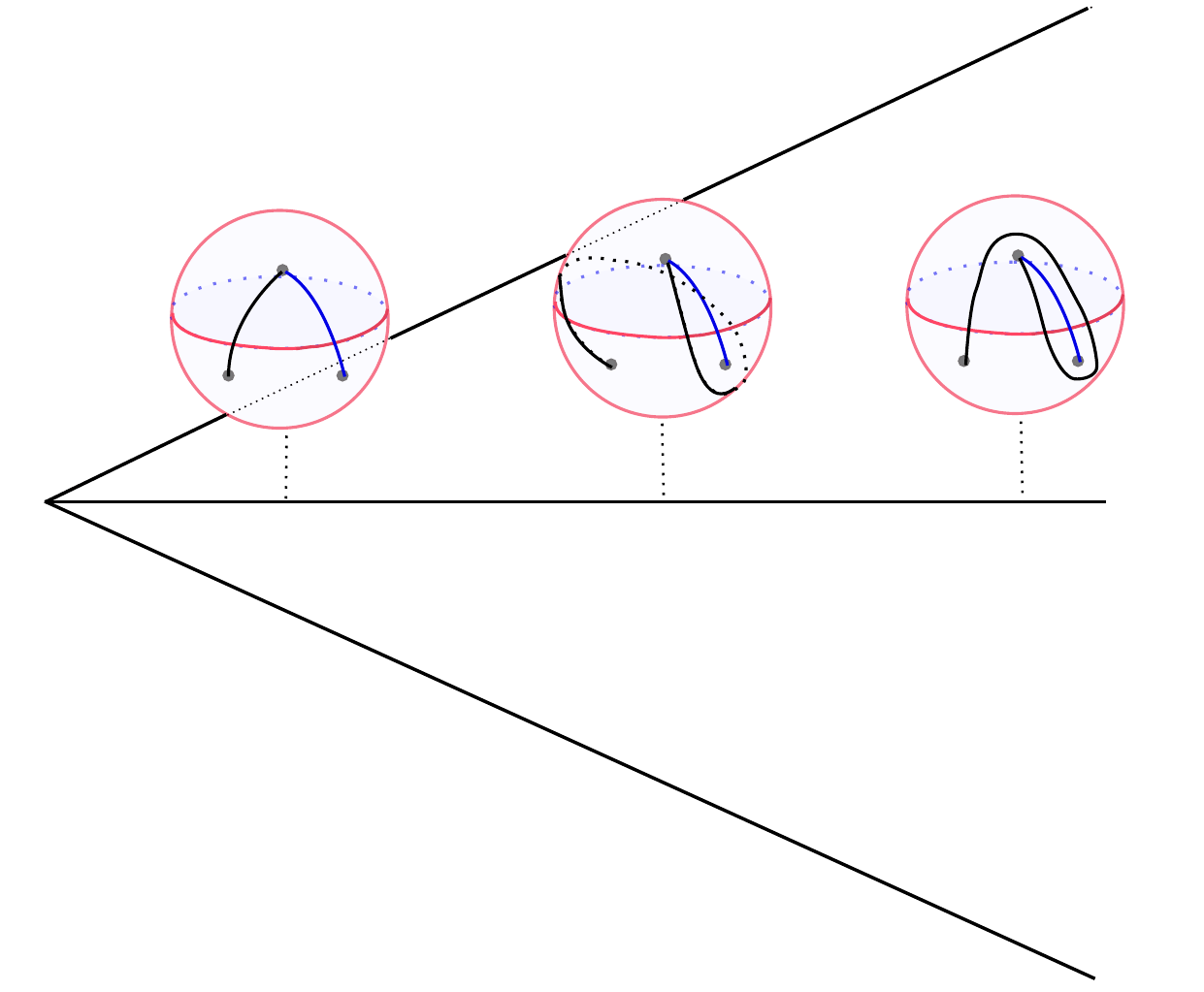}
   \put(100,40){$\mathbb{R}_+\cdot p$}
    \put(68,27){$W$}
\end{overpic}
\caption{The reduced spaces over $\mathbb{R}_+\cdot p, p=\mathrm{diag}(1,0,-1)\in i\mathfrak{t}$, and an illustration of one symplectomorphism for $G=SU(3)$. The reduced spaces have been rescaled to be of the same size. The union of the two arcs in the leftmost reduced space is the projection of the subregular Springer fibers. The symplectomophism restricts to the identity near the zero section of $T^*\mathcal{B}$, so fixes every point in the reduced spaces near the vertex of $W$. The arcs in the two reduced spaces on the right illustrate how the symplectomorphism moves the Springer fibers.}\label{Dehn_twist_SL_3}
\end{figure}

For $G=SU(n)$, we focus on certain region in $\mu^{-1}(p_n)$, where $p_n=\mathrm{diag}(1,-1,0,...,0)\in i\mathfrak{su}(n)$, and use similar argument. To prove surjectivity of $\beta_G$, we explicitly construct fiberwise Dehn twists associated to each simple root $\alpha$ (see Remark \ref{relate} below), and we show that their image under $\beta_G$ generates  $B_\mathbf{W}$. 
 
\begin{remark}\label{cat_braid}
One could compare the map in Theorem \ref{intr_thm1} with the composition $\mathrm{Sympl}_\mathcal{Z}^G(T^*\mathcal{B})\rightarrow \mathrm{Aut}(DFuk(T^*\mathcal{B}))\cong \mathrm{Aut}(D(\mathcal{B}))$ through the categorical action of $\mathrm{Sympl}_\mathcal{Z}^G(T^*\mathcal{B})$. Conjecture \ref{conjG} implies that this construction gives $\beta_G$ as well. The reason is that the Lagrangian correspondences for the fiberwise Dehn twists in $T^*(\mathcal{B}\times\mathcal{B})$ represent exactly the integral kernels for the braid group action on $D(\mathcal{B})$. 
\end{remark}

For part (2) of Theorem \ref{intr_thm1}, we have seen the proof when $G=SU(2)$. The proof for $G=SU(3)$ consists of two steps. The first step is to construct local symplectic charts for $\mu^{-1}(W)$ and ``trivialize" each chart by certain reduced spaces. The main techniques are the Duistermaat-Heckman theorem on the normal form of a moment map near a regular value (see \cite{GuSt89}), and Weinstein's Lagrangian tubular neighborhood theorem. The second step is to find the homotopy type of the symplectomorphism groups over the local charts by constructing various fibrations, and then realize $\ker\beta_G$ as the fiber product of these spaces. 
One of the difficulties along the way is to take special care for the singular loci of the moment map. 

\begin{remark}\label{relate}
 This is a remark on some related result by Seidel-Smith and Thomas.
Seidel-Smith \cite{SeSm} considered symplectic fibrations that naturally arise in the adjoint quotient maps in Lie theory, and constructed link invariants by the symplectic monodromies.  It is described in \cite{Tho} that the braid group actions are exactly the ``family Dehn twists" about the family of isotropic spheres over $T^*(G_\mathbb{C}/P)$, which are the image of the left map in the standard correspondence
$$T^*(G_\mathbb{C}/B)\leftarrow G_\mathbb{C}/B\times_{G_\mathbb{C}/P} T^*(G_\mathbb{C}/P)\rightarrow T^*(G_\mathbb{C}/P),$$
associated to the $\mathbb{P}^1$-fibration $G_\mathbb{C}/B\rightarrow G_\mathbb{C}/P$, for a minimal parabolic subgroup $P$. This is essentially the same as the fiberwise Dehn twists that we consider here, though we identify $T^*\mathcal{B}$ as a symplectic fiber bundle over $T^*(G_\mathbb{C}/P)$ using the Killing form on $\mathfrak{g}$ (rather than $\mathfrak{g}_\mathbb{C}$), and we explicitly make the fiberwise Dehn twists all $G$-equivariant.  
\end{remark}

\subsection{Acknowledgement} 
I am very grateful to my PhD advisor Prof. David Nadler for guiding me to this topic, and for invaluable discussions and consistent encouragement. I have benefited a lot from the discussions with Prof. Allen Knutson. I would also like to thank Prof. Denis Auroux, Ivan Losev, Vivek Shende, David Treumann, Zhiwei Yun, Eric Zaslow, and Dr. Long Jin, Penghui Li for their interest in this work and helpful conversations.  Special thanks go to the anonymous referee for many useful comments and suggestions, which improved the paper significantly. This work formed part of my PhD thesis at the University of California at Berkeley. 

\section{Preliminaries and Set-ups}
\emph{Notations:} Throughout this paper, we will use $G_\mathbb{C}$ to denote a semisimple Lie group over $\mathbb{C}$, with Lie algebra $\mathfrak{g}_\mathbb{C}$, and $G$ to denote for a maximal compact subgroup in $G_\mathbb{C}$ with Lie algebra $\mathfrak{g}$. We will mostly focus on type $A$, e.g. $G_\mathbb{C}=SL_n(\mathbb{C})$ and $G=SU(n)$. Fix a Borel subgroup $B$ in $G_{\mathbb{C}}$ with Lie algebra $\mathfrak{b}$ and nilradical $\mathfrak{n}$, and let $\mathcal{B}$ denote for $G_\mathbb{C}/B$. Then $T:=B\cap G$ is a maximal torus in $G$, with Lie algebra $\mathfrak{t}$, and we have the canonical identification $\mathcal{B}\cong G/T$. For $G_\mathbb{C}=SL_n(\mathbb{C})$, we will mostly take $B$ to be the subgroup of upper triangular matrices, then $T$ consists of diagonal matrices in $SU(n)$.

\subsection{Set-up for the symplectomorphism group}
We consider $T^*\mathcal{B}$ as a \emph{real} symplectic manifold, and would like to study the homotopy type of its symplectomorphism group. Since $T^*\mathcal{B}$ is noncompact, we must put some restrictions on the behavior of the symplectomorphisms near the infinity of $T^*\mathcal{B}$, so that the resulting group has ``nice" structures. A typical restriction is to make the  symplectomorphisms compactly supported, which will turn out to be too small (see the discussion below). Instead we pose the condition that the symplectomorphisms are $G_\mathbb{C}$-equivariant at infinity, where the $G_\mathbb{C}$-action is the standard Hamiltonian action induced from the left action of $G_\mathbb{C}$ on $\mathcal{B}$. We will make the restriction more precise after a brief discussion of the motivation.

\subsubsection{Motivation for the definition}\label{motiv} Let $D(\mathcal{B})$ be the constructible derived category of sheaves on $\mathcal{B}$, and let $DFuk(T^*\mathcal{B})$ be the derived Fukaya category of $T^*\mathcal{B}$. There is a categorical equivalence (the Nadler-Zaslow correspondence) between $D(M)$ and $DFuk(T^*M)$, for any real analytic manifold $M$. Motivated by the results of \cite{Deligne} and \cite{Rouquier} on the braid group action on $D(\mathcal{B})$, which are $G_\mathbb{C}$-equivariant automorphisms of the category, and the Nadler-Zaslow correspondence between $D(\mathcal{B})$ and $DFuk(T^*\mathcal{B})$, we would like to study the group of  ``$G_\mathbb{C}$-equivariant" symplectomorphisms of $T^*\mathcal{B}$, and to see its relation to the braid group. As discussed in the Introduction, the most natural interpretation of  ``$G_\mathbb{C}$-equivariancy" is to impose that $\varphi$ is $G_\mathbb{C}$-equivariant at infinity. 

\subsubsection{Definition of $\mathrm{Sympl}_{\mathcal{Z}}^{G}(T^*\mathcal{B})$}\label{G_Sympl} Let $\varphi$ be any symplectomorphism of $T^*\mathcal{B}$, then its graph $L_\varphi$ is a Lagrangian correspondence in $(T^*\mathcal{B})^-\times T^*\mathcal{B}\cong T^*(\mathcal{B}\times\mathcal{B})$. Using the $\mathbb{R}_+$-action on the cotangent fibers of $T^*(\mathcal{B}\times\mathcal{B})$, we can projectivize the space with the boundary divisor $T^\infty(\mathcal{B}\times\mathcal{B})$ being a contact manifold, with contact form $\theta^\infty$. We require $\varphi$ to be well-behaved near the infinity divisor, in the sense that $L_\varphi^\infty:=\overline{L_\varphi}\cap T^\infty(\mathcal{B}\times\mathcal{B})$ is $\theta^\infty$-isotropic.

As discussed in the Introduction, global $G_\mathbb{C}$-equivariancy on a symplectomorphism $\varphi$ forces $\varphi$ to preserve each Springer fiber, which implies that $\varphi$ must be the identity. However, if we only require the $G_\mathbb{C}$-equivariancy condition ``at infinity", this would give a reasonable constraint on $\varphi$ by 
$$L_\varphi^\infty\subset (\widetilde{\mathcal{N}}\times_\mathcal{N} \widetilde{\mathcal{N}})^\infty,$$
where $\widetilde{\mathcal{N}}=T^*\mathcal{B}$ and the fiber product is taken for the Springer resolution $\mu_\mathbb{C}$. Note that $\widetilde{\mathcal{N}}\times_\mathcal{N} \widetilde{\mathcal{N}}$  is just the \emph{Steinberg variety} $\mathcal{Z}$, which is a Lagrangian subvariety by an alternative description as the union of conormal varieties to the diagonal  $G_\mathbb{C}$-orbits $\mathcal{O}_w, w\in\mathbf{W}$ in $\mathcal{B}\times\mathcal{B}$ (here $\mathcal{O}_{\mathbf{1}}=\Delta_{\mathcal{B}}$). Thus, we make the following definition, which adds  to Definition \ref{Def1} a partially compactly supported requirement for $\varphi$. 

\begin{definition}\label{def G_C}
A symplectomorphism $\varphi$ of $T^*\mathcal{B}$ is \emph{$G_\mathbb{C}$-equivariant at infinity} if \\
(1) $L_\varphi^\infty\subset \mathcal{Z}^\infty$, \\
(2) (Partially compactly supported) There is an open neighborhood of $\mathcal{Z}^\infty-\bigcup\limits_{w\in\mathbf{W}-\{\mathbf{1}\}}
\overline{T^\infty_{\mathcal{O}_w}(\mathcal{B}\times\mathcal{B})}$ in $\overline{L}_{\varphi}$ that is contained in $\overline{T}^*_{\mathcal{O}_{\mathbf{1}}}(\mathcal{B}\times\mathcal{B})$;\\

We denote by 
$\mathrm{Sympl}_\mathcal{Z}(T^*\mathcal{B})$ for the group of symplectomorphisms with $G_\mathbb{C}$-equivariancy at infinity. 
\end{definition}

We define the $C^\infty$-topology on $\mathrm{Sympl}_\mathcal{Z}(T^*\mathcal{B})$ as follows.
\begin{align}\label{C^k-top}
&\lim\limits_{n\rightarrow\infty} f_n=f\in \mathrm{Sympl}_\mathcal{Z}(T^*\mathcal{B})\Leftrightarrow\\ \nonumber &\text{(a)} \lim\limits_{n\rightarrow\infty} f_n|_K=f|_K\text{ in }C^\infty(K, T^*\mathcal{B})\text{ for all compact subdomain } K;\\ \nonumber
&\text{(b) for any sequence of points }y_n\in L_{f_n}, \text{ if }\lim\limits_{n\rightarrow\infty}y_n \text{ exists in }T^\infty(\mathcal{B}\times\mathcal{B}), \text{ then it lies in }\mathcal{Z}^\infty. \nonumber
\end{align}

It is easy to see that $\mathrm{Sympl}_\mathcal{Z}(T^*\mathcal{B})$ endowed with this topology is a topological group. The main concern about the topology of symplectomorphisms on a non-compact symplectic manifold $M$ is that the induced automorphisms on the Fukaya category $Fuk(M)$ of a continuous family of symplectomorphisms should remain the same, i.e. there should be a well defined map $\pi_0(\mathrm{Sympl}(M))\rightarrow \mathrm{Aut}(Fuk(M))$. In our setting, we view each $\varphi\in \mathrm{Sympl}_\mathcal{Z}(T^*\mathcal{B})$ as a Lagrangian correspondence $L_\varphi$ with $L_\varphi^\infty\subset\mathcal{Z}^\infty$, and it corresponds to a sheaf (or an integral kernel) $\mathcal{F}_\varphi$ in $Sh_\mathcal{Z}(\mathcal{B}\times \mathcal{B})$, the full subcategory of sheaves with singular support contained in $\mathcal{Z}$ (cf. \cite{NZ}, \cite{Nadler2}).  Now if we have a continuous family $\{\varphi_s\}_{0\leq s\leq 1}$ in the $C^1$-topology defined by (\ref{C^k-top}), then the family of sheaves $\mathcal{F}_{\varphi_s}$ remain the same. This can be argued using the test branes representing the micolocal stalk functors in $Fuk(T^*\mathcal{B})$ and the fact that the isotopy of the branes $L_{\varphi_s}$ is non-characteristic with respect to any fixed finite set of test branes; for more details see \cite{Jin15} and \cite{Nadler2}.

We also consider the subgroup of $G$-equivariant symplectomorphisms,  denoted as $\mathrm{Sympl}^G_\mathcal{Z}(T^*\mathcal{B})$. As stated in the Introduction, we conjecture  that 
$$\mathrm{Sympl}_\mathcal{Z}(T^*\mathcal{B})\simeq \mathrm{Sympl}_\mathcal{Z}^G(T^*\mathcal{B})\simeq B_\mathbf{W}.$$

\subsection{Moment maps}
\subsubsection{Moment maps for the $G_\mathbb{C}$-action and $G$-action on $T^*\mathcal{B}$}
For any element $x\in G_\mathbb{C}$, let $L_x$ (resp. $R_x$) denote the action of left (resp. right) multiplication by $x$ on $G_\mathbb{C}$. We will use the left action to identify $G_\mathbb{C}\times\mathfrak{g}_\mathbb{C}^*$ with $T^*G_\mathbb{C}$: 
$$\begin{array}{ccc}
G_\mathbb{C}\times\mathfrak{g}_\mathbb{C}^*&\rightarrow& T^*G_\mathbb{C}\\
(x, \xi)&\mapsto&(x, L_{x^{-1}}^*\xi)
\end{array}.$$
Using the Killing form to identify $\mathfrak{g}_\mathbb{C}^*$ with $\mathfrak{g}_\mathbb{C}$, the moment maps for the left and right $G_\mathbb{C}$-action (with respect to the holomorphic symplectic form) under the above identification are given by 
$$\begin{array}{cccc}
\mu_L:& G_\mathbb{C}\times \mathfrak{g}_\mathbb{C}&\rightarrow& \mathfrak{g}_\mathbb{C}\\
 &(x,\xi)&\mapsto& \mathrm{Ad}_x\xi
\end{array}, \text{ and }\begin{array}{cccc}
\mu_R:& G_\mathbb{C}\times \mathfrak{g}_\mathbb{C}&\rightarrow& \mathfrak{g}_\mathbb{C}\\
 &(x,\xi)&\mapsto& \xi
\end{array}, \text{ respectively.}$$

For the right Hamiltonian $B$-action on $T^*G_\mathbb{C}$ induced from the right $G_\mathbb{C}$-action,  the moment map is given by
$$\begin{array}{cccc}
\mu_{R,B}:& G_\mathbb{C}\times \mathfrak{g}_\mathbb{C}&\rightarrow& \mathfrak{b}^*\cong \mathfrak{g}_\mathbb{C}/\mathfrak{n}\\
 &(x,\xi)&\mapsto& \overline{\xi}
\end{array},$$
where $\overline{\xi}$ means the image of $\xi$ under the quotient map $\mathfrak{g}_\mathbb{C}\rightarrow\mathfrak{g}_\mathbb{C}/\mathfrak{n}$. Then we have 
$T^*\mathcal{B}=\mu_{R,B}^{-1}(0)/B=G_\mathbb{C}\times_B \mathfrak{n}$, where $B$ acts on the right on $G_\mathbb{C}$ and acts adjointly on $\mathfrak{n}$ in the last twisted product. In the following, we will also use $(x,\xi), \xi\in\mathfrak{n}$, to denote a point in $T^*\mathcal{B}$, though it should be understood as a representative in the equivalence class under the relation $(x,\xi)\sim (xb, \mathrm{Ad}_{b^{-1}}\xi)$. Now the moment map for the left $G_\mathbb{C}$-action on $T^*\mathcal{B}$ is given by
\begin{equation}\label{Springer}
\begin{array}{cccc}
\mu_\mathbb{C}:& T^*\mathcal{B}&\rightarrow& \mathfrak{g}_\mathbb{C}\\
 &(x,\xi)&\mapsto& \mathrm{Ad}_x\xi
\end{array}.
\end{equation}
Since the image of $\mu_\mathbb{C}$ is the nilpotent cone $\mathcal{N}\subset\mathfrak{g}$, we will sometimes write the codomain of $\mu_\mathbb{C}$ as $\mathcal{N}$, and then it becomes the \emph{Springer resolution}. The fiber of the Springer resolution  over $u\in \mathcal{N}$ is called a \emph{Springer fiber}, and is denoted by $\mathcal{B}_u$. Here we recall some basic facts about the Springer resolutions.

The nilpotent cone $\mathcal{N}$ is stratified by $G_\mathbb{C}$-orbits, and they form a partially ordered set. The greatest one in the poset is the open dense orbit consisting of regular nilpotent elements, and is denoted by $\mathcal{N}_{reg}$. $\mathcal{N}_{reg}$ covers a unique orbit called the \emph{subregular} orbit and is denoted by $\mathcal{N}_{sub}$. The least element in the poset is the zero orbit and it is covered by a unique orbit called the \emph{minimal orbit}, denoted by $\mathcal{N}_{min}$. For $G_\mathbb{C}=SL_n(\mathbb{C})$, $\mathcal{N}$ is the set of all nilpotent matrices, the orbits are determined by the Jordan normal form, and are classified by partitions of $n$. We will use the notation $(n_1^{k_1}, n_2^{k_2},..., n_\ell^{k_\ell})$ to denote the partition of $n$ by $k_i$ copies of $n_i$, for $i=1,...,\ell$ and $n_1>n_2>\cdots>n_\ell\geq 1$.

The Springer fibers $\mathcal{B}_u$ have irreducible components indexed by Young tableaux, and over the above mentioned orbits, the geometry is well-known: if $u\in\mathcal{N}_{reg}$, then $\mathcal{B}_u$ is a point; for $u\in \mathcal{N}_{sub}$, $\mathcal{B}_u$ is the Dynkin curve determined by the root system; for $u=0$, $\mathcal{B}_u=\mathcal{B}$; for $u\in\mathcal{N}_{min}$, if $G_\mathbb{C}=SL_n(\mathbb{C})$, then each component is a fiber bundle over the Grassmannian of $k$-planes in $\ker u$ with fiber a product of flag varieties determined by the $k$-plane, $0\leq k<n-1$. Except for some specific types, the geometry and topology of Springer fibers (mostly about their singularities) are largely unknown. The celebrated Springer correspondence gives a correspondence between the irreducible representations of the Weyl group and the Weyl group action on the top homology of the Springer fibers.

Similar formulas for the above moment maps  apply to the left and right $G$-action on $T^*G$ and $T^*\mathcal{B}\cong T^*(G/T)$, with respect to the real symplectic forms. In particular, we have the identification $T^*\mathcal{B}\cong G\times_T\mathfrak{t}^\perp$, and we will use $(x,\xi), \xi\in\mathfrak{t}^\perp$ to denote a point in $T^*\mathcal{B}$, and the moment map for the left $G$-action, after identifying $\mathfrak{g}^*$ with $i\mathfrak{g}$, is given by
\begin{equation}
\begin{array}{cccc}
\mu:& T^*\mathcal{B}&\rightarrow& i\mathfrak{g}\\
 &(x,\xi)&\mapsto& \mathrm{Ad}_{x}\xi
\end{array}.
\end{equation}

\begin{lemma}\label{sing_val}
For $G=SU(n)$, the singular values of $\mu$ in a (open) Weyl chamber of $i\mathfrak{t}$ are exactly those $p$ such that $p$ has a proper subset of eigenvalues that sum up to zero.  
\end{lemma}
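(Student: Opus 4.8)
The plan is to analyze the moment map $\mu: T^*\mathcal{B} \to i\mathfrak{g}$ fiber-by-fiber over a point $p$ in the open Weyl chamber, and to identify the singular locus with the failure of the $G$-action (equivalently the stabilizer torus action) to be locally free on the level set $\mu^{-1}(p)$. Recall $T^*\mathcal{B} \cong G \times_T \mathfrak{t}^\perp$, so a point is a class $(x,\xi)$ with $\xi \in \mathfrak{t}^\perp$, and $\mu(x,\xi) = \mathrm{Ad}_x \xi$. A value $p$ in the open Weyl chamber is singular precisely when $d\mu$ fails to be surjective somewhere on $\mu^{-1}(p)$; by the standard theory of moment maps, this happens exactly at points whose $G$-stabilizer is positive-dimensional. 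Since $p$ is regular in $i\mathfrak{t}$, its $G$-stabilizer (conjugated appropriately) is the maximal torus $T$ itself, so the question becomes: for which $p$ does $\mu^{-1}(p)$ contain a point with positive-dimensional $T$-stabilizer?

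First I would reduce to a concrete linear-algebra computation. Writing $u = \mathrm{Ad}_x\xi = p$, we need $u \in \mathcal{N}$ (the nilpotent cone) with $u$ conjugate — via some $x$ — to an element $\xi$ lying in $\mathfrak{t}^\perp$; but since $p$ is already fixed, the relevant data is the Springer fiber $\mathcal{B}_p$... except $p$ is not nilpotent, so one must be careful. The correct statement: a point $(x,\xi) \in \mu^{-1}(p)$ has $\mathrm{Ad}_x\xi = p$ with $\xi \in \mathfrak{t}^\perp$, and the stabilizer of $(x,\xi)$ under the left $G$-action is $\{g : gx \cdot b = x \text{ for some } b \in T, \mathrm{Ad}_{b^{-1}}\xi = \xi\}$; this conjugates to the subtorus of $T$ commuting with $\xi$. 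So the stabilizer is positive-dimensional iff there is a subtorus $S \subset T$ of positive dimension with $\mathrm{Ad}_{S}\xi = \xi$, i.e. $\xi$ (a nilpotent element with $\xi = \mathrm{Ad}_{x^{-1}}p$, conjugate to $p$... no — $p$ is semisimple!).

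Let me correct the framework: $p = \mathrm{diag}(\lambda_1,\dots,\lambda_n)$ with $\sum \lambda_i = 0$ is a semisimple element in $i\mathfrak{t}$, regular meaning all $\lambda_i$ distinct. The fiber $\mu^{-1}(p)$ consists of pairs $(x,\xi)$, $\xi \in \mathfrak{n}$ (the nilradical, using the complex description) with $\mathrm{Ad}_x \xi = p$ — but a nilpotent $\xi$ cannot be conjugate to a nonzero semisimple $p$! The resolution is that we are working with the \emph{real} moment map $\mu$ on $T^*\mathcal{B} \cong G \times_T \mathfrak{t}^\perp$ where $\mathfrak{t}^\perp \subset i\mathfrak{g}$ under the identification $\mathfrak{g}^* \cong i\mathfrak{g}$; here $\xi$ ranges over all of $\mathfrak{t}^\perp$, not just nilpotents, and $\mathrm{Ad}_x\xi = p$ has solutions precisely when $p$ has zero diagonal in some basis adapted to a flag, which is automatic. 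Concretely, I would show: $(x,\xi) \in \mu^{-1}(p)$ corresponds to choosing a complete flag $F_\bullet = x \cdot (\text{standard flag})$ such that $p$ restricted to $F_k/F_{k-1}$ has the right trace pattern; the stabilizer in $T$ is positive-dimensional iff some $\mathrm{Ad}_x$-translate of a coordinate subtorus fixes $\xi$, which — after unwinding — happens iff the flag $F_\bullet$ is compatible with a nontrivial eigenspace decomposition summing to a $p$-invariant subspace of trace zero. This is exactly the condition that a proper sub-multiset of $\{\lambda_1,\dots,\lambda_n\}$ sums to $0$.

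The key steps, in order: (1) identify $\mu^{-1}(p)$ and the left-$G$ action, reducing the singularity condition to the existence of a point with positive-dimensional stabilizer, using that regularity of $d\mu$ at a point is equivalent to discreteness of the stabilizer (standard moment-map fact — the symplectic slice theorem or direct computation that $\mathrm{coker}\, d\mu_{(x,\xi)} \cong (\mathfrak{g}_{(x,\xi)})^*$); (2) conjugate so $x = e$, so the stabilizer is $Z_T(\xi)$ for $\xi \in \mathfrak{t}^\perp$ with a constraint relating $\xi$ and $p = \mathrm{Ad}_x\xi$; (3) do the explicit $SU(n)$ linear algebra: block-decompose according to which eigenvalues of $p$ the subtorus $S$ must permute-fix, and observe that $\mathrm{Ad}_S\xi = \xi$ with $\xi$ "off-diagonal" forces $\xi$ to intertwine distinct eigenspaces of $p$ only when those eigenspaces' weights agree under $S$, constraining the block structure of $p$; (4) check both directions — if such a zero-sum subset exists, exhibit an explicit $(x,\xi)$ with $1$-dimensional stabilizer (take $\xi$ nilpotent within the corresponding parabolic block-decomposition); if no such subset exists, show every fiber point has discrete stabilizer. \textbf{The main obstacle} I anticipate is step (3)–(4): correctly bookkeeping the interaction between the nilradical constraint $\xi \in \mathfrak{n}$ (or $\mathfrak{t}^\perp$), the conjugating element $x$, and the subtorus $S$, so that "positive-dimensional $Z_T$-stabilizer somewhere on the fiber" cleanly translates to "a proper zero-sum sub-multiset of eigenvalues" — in particular ruling out spurious stabilizers and confirming that the zero-sum condition is both necessary and sufficient rather than merely necessary.
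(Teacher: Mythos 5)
Your plan is essentially the paper's own argument: you correctly reduce ``$p$ is a singular value'' to the existence of a point in $\mu^{-1}(p)$ with positive-dimensional stabilizer, identify that stabilizer (after conjugating by $x$) with $Z_T(\xi)$ for $\xi\in\mathfrak t^\perp$ conjugate to $p$, and then read off the block-diagonal structure of $\xi$ forced by a positive-dimensional subtorus, which is exactly the zero-sum-subset condition since each block has zero trace. The paper's proof is the same reduction followed by the same $SU(n)$ linear algebra (it also quotients by the center to make the $T$-action quasi-free, a point you sidestep by phrasing things in terms of positive-dimensional rather than nontrivial stabilizers, which is equivalent).
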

\begin{proof}
Let $\overset{\circ}{W}$ be a (open) Weyl chamber in $i\mathfrak{t}$. We first show that $\mu^{-1}(\overset{\circ}{W})$ is a symplectic manifold with a Hamiltonian $T$-action, and the restriction of $\mu$ is just the moment map for the $T$-action. For this to hold, we need the composition
\begin{equation}\label{eq: sing_val}
T^*\mathcal{B}\overset{\mu}{\rightarrow} i\mathfrak{g}\rightarrow i\mathfrak{g}/i\mathfrak{t}\cong \mathfrak{t}^\perp
\end{equation}
to be a submersion along $\mu^{-1}(\overset{\circ}{W})$, and we need to specify the symplectic complement to each tangent space of $\mu^{-1}(\overset{\circ}{W})$. For any point $(x,\xi)\in T^*\mathcal{B}$, 
we have the relation $d\mu_{(x,\xi)}(L_\eta)=[\eta, \mu(x,\xi)]$. Since for any $p\in \overset{\circ}{W}$, we have $[i\mathfrak{t}^\perp, p]=\mathfrak{t}^\perp$, by the regularity of $p$ as an element in $i\mathfrak{t}$, 
we see that (\ref{eq: sing_val}) is a submersion along $\mu^{-1}(\overset{\circ}{W})$ and $\{L_\eta: \eta\in i\mathfrak{t}^\perp\}$ naturally gives a complement to the tangent spaces of $\mu^{-1}(\overset{\circ}{W})$. 
Now we just need to show that $H_\eta, \eta\in i\mathfrak{t}^\perp$ is constant on $\mu^{-1}(\overset{\circ}{W})$ and $\{L_\eta: \eta\in i\mathfrak{t}^\perp\}$ at any point $(x,\xi)\in T^*\mathcal{B}$ is a symplectic subspace of $T_{(x,\xi)}(T^*\mathcal{B})$. The first one follows from the fact that 
$$H_\eta(\mu^{-1}(\overset{\circ}{W}))=\langle\mu, \eta\rangle(\mu^{-1}(\overset{\circ}{W}))=\langle \overset{\circ}{W}, \eta\rangle=\{0\}.$$
For the second one, because of the equality
$$\omega_{(x,\xi)}(L_{\eta_1}, L_{\eta_2})=\langle [\eta_1,\eta_2], \mu(x,\xi)\rangle=\langle\eta_1,[\eta_2, \mu(x,\xi)]\rangle,$$
if $\mu(x,\xi)\in \overset{\circ}{W}$ and $\eta_2\in i\mathfrak{t}^\perp-\{0\}$, then the 1-form $\langle -,[\eta_2, \mu(x,\xi)]\rangle$ is nonzero on $i\mathfrak{t}^\perp$. 


Since there is a nontrivial center in $G$, to make the $T$-action quasi-free (i.e. the stabilizer of any point is a connected subgroup of $T$), we quotient out the center in $G$ and consider the action by the adjoint group.  Then $(x,\xi)\in \mu^{-1}(\overset{\circ}{W})$ is a singular point of $\mu$ if and only if $(x,\xi)$ has a nontrivial stabilizer by the $T$-action. This is exactly when $\xi$ has a nontrivial stabilizer in $T$. 

If $p$ has a proper subset of eigenvalues that sum up to zero, then $p=\begin{bmatrix}p_1& \\
 &p_2
\end{bmatrix}$ up to conjugation, for some $p_1\in i\mathfrak{su}(k)$ and $p_2\in i\mathfrak{su}(n-k)$, then we can find $\xi\in \mu^{-1}(p)$ of the form $\begin{bmatrix}\xi_1 & \\ &\xi_2\end{bmatrix}$ (up to conjugation) with $\xi_i\in \mu^{-1}(p_i)$, $i=1,2$. Then $\xi$ has nontrivial stabilizers containing $\mathrm{diag}(e^{i\theta},...,e^{i\theta}, e^{i\rho},..., e^{i\rho})$ with $k\theta+(n-k)\rho\in 2\mathbb{Z}\pi.$
Conversely, assume $\xi$ is fixed by an element of the form $\mathrm{diag}(e^{i\theta},...,e^{i\theta}, e^{i\theta_1},...,e^{i\theta_{n-k}})$ (up to conjugation), where the first $0<k<n$ entries are all $e^{i\theta}$, and $\theta_j-\theta\notin 2\mathbb{Z}\pi$ for $j=1,...,n-k$. Then we have $\xi_{j\ell}=0$ for $j\in\{1,...,k\}, \ell\in\{k+1,...,n\}$, thus $\xi$ is of the form $\begin{bmatrix}\xi_1&  \\  &\xi_2 \end{bmatrix}$ (up to conjugation), where $\xi_i\in \mu^{-1}(p_i), i=1,2$ for some $p_1\in i\mathfrak{su}(k)$ and $p_2\in i\mathfrak{su}(n-k)$.
\end{proof}

For any $p\in i\mathfrak{g}$, we will use $G_p$ to denote the stabilizer of $p$ in the coadjoint action by $G$, and $M_p$ to denote for the (possibly singular) reduced space $\mu^{-1}(p)/G_p$.
\begin{prop}
For any $p\in i\mathfrak{t}$, $M_p$ is naturally identified with the reduced space at zero of the $T$-action on the coadjoint orbit $\mathcal{O}(p)$. 
\end{prop}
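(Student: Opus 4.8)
\emph{Strategy.} Both spaces arise from $T^*G$ by symplectic reduction under the commuting Hamiltonian actions of left translation by $G$ and right translation by $T$; the plan is to reduce at $(p,0)\in i\mathfrak{g}\times i\mathfrak{t}$ and read off the two descriptions by performing the two reductions in the two possible orders. Using the left trivialization $T^*G\cong G\times i\mathfrak{g}$ and the moment-map formulas recalled above, $\mu_L(x,\xi)=\mathrm{Ad}_x\xi$, $\mu_R(x,\xi)=\xi$, and the right $T$-moment map is $\mu_{R,T}=\mathrm{pr}_{i\mathfrak{t}}\circ\mu_R$.

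I would use two standard facts. First, the right $T$-action is free and $\mu_{R,T}^{-1}(0)/T=T^*(G/T)=T^*\mathcal{B}$, with residual left $G$-action having moment map $\mu$; hence $(T^*\mathcal{B})/\!\!/_p G=\mu^{-1}(p)/G_p=M_p$. Second, $\mu_L^{-1}(p)=\{(x,\mathrm{Ad}_{x^{-1}}p)\}\cong G$ with $G_p$ acting freely by left translation, and $\mu_R$ descends to a symplectomorphism $\mu_L^{-1}(p)/G_p\xrightarrow{\ \sim\ }\mathcal{O}(p)$, $x\mapsto\mathrm{Ad}_{x^{-1}}p$, onto the coadjoint orbit with its Kirillov--Kostant--Souriau form; under this the residual right $T$-action becomes the restriction to $T$ of the coadjoint action (up to the immaterial automorphism $t\mapsto t^{-1}$ of $T$), whose moment map is $\nu\colon\mathcal{O}(p)\hookrightarrow i\mathfrak{g}\xrightarrow{\ \mathrm{pr}_{i\mathfrak{t}}\ }i\mathfrak{t}$, so that $\mathcal{O}(p)/\!\!/_0 T=\nu^{-1}(0)/T$ is precisely the reduced space in the statement.

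Reducing $T^*G$ by the full $G\times T$-action at $(p,0)$ --- equivalently forming $\bigl(\mu_L^{-1}(p)\cap\mu_{R,T}^{-1}(0)\bigr)/(G_p\times T)$ --- and doing the $T$-reduction first gives $M_p$ by the first fact, while doing the $G$-reduction first gives $\nu^{-1}(0)/T$ by the second; hence $M_p\cong\nu^{-1}(0)/T$. Tracing through, the underlying map is induced by inversion $x\mapsto x^{-1}$ on $G$ and is the canonical bijection $G_p\backslash G/T\cong T\backslash G/G_p$ restricted to the loci where $\mathrm{pr}_{i\mathfrak{t}}(\mathrm{Ad}_{x^{-1}}p)=0$, so the identification is natural.

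The delicate point --- the main obstacle --- is that by Lemma \ref{sing_val} the value $p$ may be singular for $\mu$, equivalently $0$ singular for $\nu$ on $\mathcal{O}(p)$, so $M_p$ and $\nu^{-1}(0)/T$ are in general only stratified symplectic spaces; one must therefore run the above as a statement about singular (Sjamaar--Lerman) reductions, using reduction by stages in the singular category and checking that the orbit-type stratifications of the two presentations match. That $G_p$ is connected for $G=SU(n)$ removes any residual finite-group ambiguity, and the proposition is then an isomorphism of stratified symplectic spaces. One can also avoid the stages theorem by taking the explicit bijection above as the definition and verifying directly that it intertwines the reduced symplectic forms on the principal stratum, which determines the identification on all strata.
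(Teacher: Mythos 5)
Your proof takes essentially the same approach as the paper: both realize $M_p$ and $\nu^{-1}(0)/T$ as two-step symplectic reductions of $T^*G$ by the commuting left $G$-action and right $T$-action, performed in the two possible orders. You supply considerably more detail than the paper's two-sentence proof --- the explicit moment maps, the inversion giving naturality, and the caveat about singular (Sjamaar--Lerman) reduction when $p$ is a singular value, which the paper leaves implicit --- but the underlying argument is the same.
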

\begin{proof}
Note that $\mathcal{O}(p)$ is the reduced space at $p$ of the left $G$-action on $T^*G$. Since the left and right $G$-actions on $T^*G$ commute, taking 2-step Hamiltonian reductions in both orders are the same. 
\end{proof}

\begin{lemma}\label{inv_moment}
Any $G$-equivariant symplectomorphism $\varphi$ of $T^*\mathcal{B}$ must preserve $\mu$, i.e. $\mu\circ \varphi=\mu.$
\end{lemma}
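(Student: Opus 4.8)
The plan is to use the defining property of the moment map together with the $G$-equivariance of $\varphi$ to show that $\mu\circ\varphi$ and $\mu$ differ by a constant element of $\mathfrak{g}^*$, and then to rule out a nonzero constant by invoking equivariance once more.

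First I would recall that for each $X\in\mathfrak{g}$, writing $\mu^X:=\langle\mu,X\rangle\in C^\infty(T^*\mathcal{B})$ and $\xi_X$ for the fundamental vector field generating the $G$-action, the moment map condition reads $\iota_{\xi_X}\omega=d\mu^X$, where $\omega$ is the real canonical symplectic form. Since $\varphi$ is $G$-equivariant, its flow commutes with the flow of $\xi_X$, so $(\varphi^{-1})_*\xi_X=\xi_X$. Combining this with $\varphi^*\omega=\omega$ and the naturality identity $\varphi^*(\iota_V\alpha)=\iota_{(\varphi^{-1})_*V}(\varphi^*\alpha)$, one computes
\begin{equation*}
d(\mu^X\circ\varphi)=\varphi^*(d\mu^X)=\varphi^*(\iota_{\xi_X}\omega)=\iota_{(\varphi^{-1})_*\xi_X}(\varphi^*\omega)=\iota_{\xi_X}\omega=d\mu^X.
\end{equation*}
Hence $\mu^X\circ\varphi-\mu^X$ is locally constant, and since $T^*\mathcal{B}$ is connected (because $\mathcal{B}=G/T$ is) it equals a constant $c(X)$. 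As $\mu^X$ and $\mu^X\circ\varphi$ are both linear in $X$, the assignment $X\mapsto c(X)$ defines an element $c\in\mathfrak{g}^*$, so that $\mu\circ\varphi=\mu+c$.

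It then remains to show $c=0$. For this I would use that $\mu(x,\xi)=\mathrm{Ad}_x\xi$ is $\mathrm{Ad}^*$-equivariant (under $\mathfrak{g}^*\cong i\mathfrak{g}$), i.e. $\mu(g\cdot y)=\mathrm{Ad}^*_g\mu(y)$ for all $g\in G$, together with the $G$-equivariance $\varphi(g\cdot y)=g\cdot\varphi(y)$. Evaluating $\mu$ on $\varphi(g\cdot y)$ in two ways gives $\mathrm{Ad}^*_g(\mu(y)+c)=\mathrm{Ad}^*_g\mu(y)+c$ for all $y$ and all $g\in G$, whence $\mathrm{Ad}^*_g c=c$ for every $g\in G$. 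Since $\mathfrak{g}$ is a compact semisimple Lie algebra (the Lie algebra of a maximal compact subgroup of the semisimple group $G_\mathbb{C}$), it has trivial center, so the coadjoint $G$-action has no nonzero fixed vector; therefore $c=0$ and $\mu\circ\varphi=\mu$.

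I do not expect a serious obstacle here: the only points needing minor care are the direction convention in the naturality identity for the interior product under pullback by a diffeomorphism, and the use of connectedness of $T^*\mathcal{B}$. The closing step genuinely relies on semisimplicity of $\mathfrak{g}$ — for a reductive $G$ with nontrivial center one would only conclude that $c$ lies in the dual of the center, and the statement could fail — so this hypothesis is essential rather than cosmetic.
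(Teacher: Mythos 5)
Your argument is correct and follows essentially the same route as the paper's: both show $\mu\circ\varphi$ is another (equivariant) moment map, hence differs from $\mu$ by an $\mathrm{Ad}^*$-invariant constant, and both kill that constant using semisimplicity (your ``trivial center of $\mathfrak g$'' is equivalent, for a compact Lie algebra, to the paper's ``$[\mathfrak g,\mathfrak g]=\mathfrak g$''). The only difference is that you spell out the computation $d(\mu^X\circ\varphi)=d\mu^X$ and the derivation of $\mathrm{Ad}^*_g c=c$ directly, where the paper simply cites the uniqueness statement from McDuff--Salamon.
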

\begin{proof}
Since $\varphi$ is $G$-equivariant, $\mu\circ\varphi$ is also a moment map for the $G$-action on $T^*\mathcal{B}$. Note the dual of the moment map $\mathfrak{g}\rightarrow C^\infty(T^*\mathcal{B})$ is unique up to a functional $\sigma\in \mathfrak{g}^*$ such that $\sigma$ vanishes on $[\mathfrak{g},\mathfrak{g}]$ (see 5.2 in \cite{McDuff}). By semisimplicity of $\mathfrak{g}$, $[\mathfrak{g},\mathfrak{g}]=\mathfrak{g}$, so $\sigma=0$. Therefore, $\mu\circ \varphi=\mu.$

\end{proof}

\section{Construction of the surjective homomorphism $\beta_G:\mathrm{Sympl}_\mathcal{Z}^{G}(T^*\mathcal{B})\rightarrow B_{\mathbf{W}}, G=SU(n)$}
Since the moment map $\mu: T^*\mathcal{B}\rightarrow i\mathfrak{g}$ factors through
$\mu_\mathbb{C}: T^*\mathcal{B}\rightarrow\mathcal{N}$, every Spinger fiber is contained in $\mu^{-1}(p)$ for some $p$. For $G=SU(n)$, $\mu$ is the composition of $\mu_\mathbb{C}$ with the map $\mathcal{N}\rightarrow i\mathfrak{s}\mathfrak{u}(n), u\mapsto \frac{i}{2}(u-u^*)=\frac{1}{2}((iu)+(iu)^*)$. Also the two descriptions of $T^*\mathcal{B}$ by $G_{\mathbb{C}}\times_B\mathfrak{n}$ and $G\times_Ti\mathfrak{t}$ are identified by $(x,u)\mapsto (x,\frac{i}{2}(u-u^*))$, where we only choose $x\in G$. In the following, we will call a Springer fiber \emph{nontrivial} if it is not a point, and we will denote its type by the type of the nilpotent orbit it corresponds to. 

\begin{prop}
If $p\in i\mathfrak{s}\mathfrak{u}(n)$ has $n-1$ positive eigenvalues or $n-1$ negative eigenvalues, then $\mu^{-1}(p)$ does not contain any nontrivial Springer fibers. 
\end{prop}
\begin{proof}
For an element $u=[a_{ij}]\in\mathfrak{n}$,  $u$ is nonregular exactly when $a_{i,i+1}=0$ for some $i$. Then by conjugation of some permutation matrix, $\frac{i}{2}(u-u^*)$ has the $2\times 2$ submatrix on the upperleft corner to be zero. If $\frac{i}{2}(u-u^*)$ has $n-1$ positive eigenvalues or $n-1$ negative eigenvalues, then the top $2\times 2$ submatrix must have one positive eigenvalue and one negative eigenvalue. So the lemma follows. 
\end{proof}

\subsection{A study of certain loci in $\mu^{-1}(\mathrm{diag}(1,-1,0,...,0))$}
Let $p_n=\mathrm{diag}(1,-1,0,...,0)\in i\mathfrak{su}(n)$. Given $(x,\xi)\in \mu^{-1}(p_n)$, 
let $[\xi]_i$ denote for the matrix obtained by deleting the $i$-th row and column of $\xi$. 
Then $[\xi]_{n-1}$ lies in $\mathcal{O}(\epsilon p_{n-1})$ for some $\epsilon\geq 0$, by the Gelfand-Tsetlin pattern or basic facts about Hermitian matrices. Therefore, $\xi$ can be conjugated to the matrix $z_n$ in (\ref{char}) below, by a matrix $y_{n-1}\in SU(n-1)$ under the obvious embedding $SU(n-1)\hookrightarrow SU(n)$ (taking $y_{n-1}$ to $\begin{bmatrix}y_{n-1}& \\ &1\end{bmatrix}$). Now we calculate the characteristic polynomial of $z_n$ and see the possible values for $a_1,...,a_{n-1}$ in (\ref{char}).

\begin{align}\label{char}
&\det (z_n-\lambda I)=\det \begin{bmatrix}\epsilon-\lambda& & & & &a_1 \\
 &-\epsilon-\lambda& & & &a_2 \\
 & &-\lambda& & &a_3 \\
 & & &\ddots& &\vdots\\
 & & & &-\lambda&a_{n-1} \\
 \bar{a}_1&\bar{a}_2 &\bar{a}_3 &\cdots &\bar{a}_{n-1} &-\lambda
\end{bmatrix}\\
\nonumber=&\det  \begin{bmatrix}\epsilon-\lambda& & & & &a_1 \\
 &-\epsilon-\lambda& & & &a_2 \\
 & &-\lambda& & &a_3 \\
 & & &\ddots& &\vdots\\
 & & & &-\lambda&a_{n-1} \\
 0&0&0 &\cdots &0 &-\lambda+\frac{|a_1|^2}{\lambda-\epsilon}+\frac{|a_2|^2}{\lambda+\epsilon}+\frac{1}{\lambda}\sum\limits_{i=3}^{n-1}|a_i|^2
\end{bmatrix}\\
\nonumber=&(-1)^n(\lambda-\epsilon)(\lambda+\epsilon)\lambda^{n-3}(\lambda-\frac{|a_1|^2}{\lambda-\epsilon}-\frac{|a_2|^2}{\lambda+\epsilon}-\frac{1}{\lambda}\sum\limits_{i=3}^{n-1}|a_i|^2).
\end{align}

There are three cases. \\
(1) If $\epsilon\neq 0,1$, then we must have
\begin{align*}
&1-\frac{|a_1|^2}{1-\epsilon}-\frac{|a_2|^2}{1+\epsilon}-\sum\limits_{i=3}^{n-1}|a_i|^2=0,\ -1-\frac{|a_1|^2}{-1-\epsilon}-\frac{|a_2|^2}{-1+\epsilon}+\sum\limits_{i=3}^{n-1}|a_i|^2=0\\
&\frac{1}{\epsilon}|a_1|^2-\frac{1}{\epsilon}|a_2|^2=0,\ \sum\limits_{i=3}^{n-1}|a_i|^2=0.
\end{align*}
These are equivalent to 
$$|a_1|^2=|a_2|^2=\frac{1}{2}(1-\epsilon^2), a_3=\cdots=a_{n-1}=0.$$ Since we only care about $\xi$ up to the adjoint $T$-action, we can quotient out the adjoint actions by $\{\mathrm{diag}(e^{i\alpha}, e^{i\alpha},\cdots, e^{i\alpha}, e^{-i(n-1)\alpha}), \alpha\in [0,2\pi)\}$ on $z_n$, which commute with the image of $SU(n-1)$ in $SU(n)$, and assume that $a_1=\sqrt{\frac{1}{2}(1-\epsilon^2)}$ and $a_2=\sqrt{\frac{1}{2}(1-\epsilon^2)}e^{i\theta}.$\\
(2) If $\epsilon=0$, then we have
$$ \sum\limits_{i=1}^{n-1}|a_i|^2=1.$$\\
(3) If $\epsilon=1$, then 
$$a_1=\cdots =a_{n-1}=0.$$
In summary, if $\epsilon\neq 0,1$, then 
\begin{equation}\label{z_n}
z_n=\begin{bmatrix}\epsilon& & & & & \sqrt{\frac{1}{2}(1-\epsilon^2)}\\
 &-\epsilon& & & &\sqrt{\frac{1}{2}(1-\epsilon^2)}e^{i\theta}\\
 & &0& & &0 \\
 & & &\ddots& &\vdots\\
 & & & &0&0 \\
\sqrt{\frac{1}{2}(1-\epsilon^2)}&\sqrt{\frac{1}{2}(1-\epsilon^2)}e^{-i\theta}&0&\cdots &0&0
\end{bmatrix};
\end{equation}
if $\epsilon=0$, then $[\xi]_{n-1}=\mathbf{0}$ and the last column of $\xi$ has length square equal to 1; if $\epsilon=1$, then $[\xi]_{n-1}=p_{n-1}$ and the last column and row of $\xi$ are zero. In addition, if $\epsilon\neq 0,1$, by a direct calculation, we see that if $\xi=y_{n-1}z_n y_{n-1}^*$ and $y_{n-1}=[b_{jk}]_{1\leq j,k\leq n-1}$, then $|b_{j1}|=|b_{j2}|$ for all $j$. Furthermore, one can check that if $[\xi]_{n-1}$ is regular, then $(x,\xi)$ is subregular if and only if $\epsilon=1$ or $0<\epsilon<1$ and $\bar{b}_{n-1,1}+e^{i\theta}\bar{b}_{n-1,2}=0$ with $b_{n-1,1}\neq 0$, and $(x,\xi)$ is fixed by a nontrivial $S^1$-action if and only if $\epsilon=1$. 

\begin{lemma}\label{codim1}
There is a small neighborhood $\mathcal{U}$ of the union of projection of all the subregular Springer fibers in $M_{p_n}$ (inside the open set of the projection of all the regular and subregular Springer fibers), which is topologically the product of $\mathcal{Y}=\mu^{-1}(p_{n-1})^\mathrm{reg}/G_{p_{n-1}}$ with a disc $\Sigma$ on which a  subregular Springer fiber projects down to $n$ ordered points $Q_i, i=1,\cdots, n$ with a line segment connecting each pair of consecutive points. 
\end{lemma}
\begin{proof}
For any subregular element $(x,\xi)$, we have $[\xi]_i$ must be regular for some $1\leq i\leq n$. 
Let $\mu^{-1}(p_n)_i$ denote for the sublocus in $\mu^{-1}(p_n)$ where $[\xi]_i$ is regular. Then the projection $\pi_i: \mu^{-1}(p_n)_i\rightarrow \mu^{-1}(p_{n-1})^{\mathrm{reg}}/G_{p_{n-1}}, (x,\xi)\mapsto \hat{[\xi]}_{i}$ (modulo the adjoint $T$-action) is a submersion, where $\hat{[\xi]}_{i}$ means the rescaling of $[\xi]_i$ by a positive number so that it has eigenvalues $1,-1,0,\cdots,0$. By the calculations above, one gets that each fiber of $\pi_i$ quotient out by the $G_{p_n}$-action is a disc with polar coordinate $((1-\epsilon),\theta)$, where $0<\epsilon\leq 1$. The center $\epsilon=1$ corresponds to the $i$-th fixed point of an $S^1$-action on a subregular Springer fiber, and there is a ray (resp. two rays) when $i=1,n$ (resp. $i\neq 1,n$) in the disc that is the projection of (a portion of) subregular Springer fibers in $\mu^{-1}(p_n)$.

Since every subregular Springer fiber is fixed by an $S^1$-action, we know the family of subregular Springer fibers in $\mu^{-1}(p_n)$ modulo the $G_{p_n}$-action is precisely parametrized by $\mu^{-1}(p_{n-1})^\mathrm{reg}/G_{p_{n-1}}$. Now the lemma easily follows. 

\end{proof}

\subsection{Construction of $\beta_G: \mathrm{Sympl}^G_\mathcal{Z}(T^*\mathcal{B})\rightarrow B_{\mathbf{W}}$}\label{constr beta_G}

Let us continue on using the notations from Lemma \ref{codim1}. Fix a slice of the family of discs $\Sigma_0=\{y_0\}\times\Sigma$, choose two distinct points $Q_0, Q_{n+1}$ on the boundary and draw line segments connecting $Q_0$ (resp. $Q_n$) with $Q_1$ (resp. $Q_{n+1}$).

Now let $\mathcal{U}_s$ be a family of open sets in $M_{s\cdot p_n}, s>0$, which are identified under the $\mathbb{R}_+$-action, and which has $\mathcal{U}_1=\mathcal{U}$. We denote the image of $\Sigma_0\subset \mathcal{U}$ in $\mathcal{U}_s$ under the $\mathbb{R}_+$-action also by $\Sigma_0$. Given any $\varphi\in \mathrm{Sympl}_{\mathcal{Z}}^G(T^*\mathcal{B})$, we look at $\varphi_s|_{\mathcal{U}_{s}}$ as $s\rightarrow\infty$, where $\varphi_s$ is the induced automorphism on $M_{s\cdot p_n}$.  Since $\varphi$ has to preserve each Springer fiber at infinity and has to preserve the isotropy group of each point, we see that for $s$ very large,  $\varphi_s$ fixes every point on the vertical boundary $\partial\Sigma_0\times \mathcal{Y}$, and $\varphi_s(\Sigma_0)$ is contained in a neighborhood $\Sigma\times B_\epsilon(y_0)$, where $ B_\epsilon(y_0)$ is a small ball in $\mathcal{Y}$ centered at $y_0$. Note that the projection of the image of the paths $\varphi_s(\overset{\longrightarrow}{Q_iQ_{i+1}}), 0\leq i\leq n$ to $\Sigma_0$ can be isotoped to be disjoint except at the endpoints, relative to $\partial \Sigma_0$ and $\{Q_i\}_{i=1}^n$, otherwise $\varphi_s(\overline{Q_iQ_{i+1}}\times B_\epsilon(y_0))$ will intersect $\varphi_s(\overline{Q_jQ_{j+1}}\times B_\epsilon(y_0))$ for some $i\neq j$ away from $\{Q_i\}_{i=1}^n\times \mathcal{Y}$. 
Therefore, as $s$ becomes sufficiently large, the isotopy classes of the paths $\varphi_s(\overset{\longrightarrow}{Q_iQ_{i+1}}), 0\leq i\leq n$ within that neighborhood relative to the boundary $\partial(\Sigma\times B_\epsilon(y_0))$  and $\bigcup\limits_{i=1}^n\{Q_i\}\times B_\epsilon(y_0)$ corresponds to an element in $B_n$, the braid group of $n$-strands, and this gives the desired homomorphism for $G=SU(n)$:
\begin{equation}\label{beta_G}
\begin{array}{ccc}
\beta_G: \mathrm{Sympl}^G_\mathcal{Z}(T^*\mathcal{B})&\longrightarrow &B_{\mathbf{W}}
\end{array}
\end{equation}

\subsection{Fiberwise Dehn twists and the surjectivity of $\beta_G$}\label{sec.fd}
Fix a Borel subgroup $B$ in $G_\mathbb{C}$. Let $\alpha$ be a simple root, and $P^\mathbb{C}_\alpha$ be the corresponding minimal parabolic subgroup. Let $P_\alpha=P^\mathbb{C}_\alpha\cap G$. Since $T^*(G/T)\cong G\times_T\mathfrak{t}^\perp$ and $T^*(G/P_\alpha)\cong G\times_{P_\alpha}\mathfrak{p_\alpha}^\perp$ by the Killing form, we have a natural smooth fibration 
\begin{equation}\label{fib}
\xymatrix{&T^*\mathbb{P}^1\ar[r] &T^*(G/T)\ar[d]\\
& &T^*(G/P_\alpha),
}
\end{equation}
where the vertical arrow is given by the orthogonal projection 
$$p_\alpha: \mathfrak{t}^\perp\rightarrow \mathfrak{p}_\alpha^\perp.$$

\begin{lemma}
The fibration (\ref{fib}) is a symplectic fibration. 
\end{lemma}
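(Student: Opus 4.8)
The plan is to exhibit an explicit fiberwise symplectic structure on the total space $T^*(G/T)$ relative to the base $T^*(G/P_\alpha)$ and verify the two conditions in the definition of a symplectic fibration: that the restriction of the ambient symplectic form to each fiber is nondegenerate, and that the structure group reduces to the symplectomorphism group of the fiber $T^*\mathbb{P}^1$. The key observation is that the orthogonal projection $\mathfrak{t}^\perp \to \mathfrak{p}_\alpha^\perp$ (with respect to the Killing form restricted to the compact real form $\mathfrak{g}$) is $P_\alpha$-equivariant, so it descends to the map $G\times_T \mathfrak{t}^\perp \to G\times_{P_\alpha}\mathfrak{p}_\alpha^\perp$ in (\ref{fib}), and the fiber over the class of $(x,\eta)$ is identified with $x\cdot\bigl(P_\alpha\times_T(\mathfrak{t}^\perp\cap(\mathfrak{p}_\alpha^\perp)^{\perp_{\mathfrak{g}}})\bigr)$-translates, i.e. with $T^*(P_\alpha/T)\cong T^*\mathbb{P}^1$ after the appropriate affine shift by $\eta$.

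First I would set up coordinates: write $\mathfrak{t}^\perp = \mathfrak{m}_\alpha \oplus \mathfrak{p}_\alpha^\perp$ where $\mathfrak{m}_\alpha := \mathfrak{t}^\perp\cap\mathfrak{p}_\alpha$ is the Killing-orthogonal complement, a $2$-real-dimensional $T$-invariant subspace ($\mathfrak{p}_\alpha = \mathfrak{t}\oplus\mathfrak{g}_\alpha\oplus\mathfrak{g}_{-\alpha}$ on the complexification, so $\mathfrak{m}_\alpha$ is the compact $\mathfrak{su}(2)_\alpha$-part transverse to $\mathfrak{t}$). Then I would recall the standard description of the cotangent symplectic form on $G\times_T\mathfrak{t}^\perp$ via the connection one-form coming from the bi-invariant metric (equivalently, realize $T^*(G/T)$ as the Hamiltonian reduction $\mu_{R,T}^{-1}(0)/T$ of $T^*G$, as in the excerpt). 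In those terms the projection (\ref{fib}) is precisely the further reduction by the residual $P_\alpha/T \cong \mathbb{P}^1$-directions, and Marsden–Weinstein reduction in stages shows both that the fibers are the symplectic reductions $T^*(P_\alpha/T)$ and that $\omega$ restricts nondegenerately to each fiber. For the structure-group statement, $G$ acts transitively on the base with stabilizer $P_\alpha$, so (\ref{fib}) is the associated bundle $G\times_{P_\alpha} T^*(P_\alpha/T)$ with $P_\alpha$ acting on the fiber by cotangent lifts of left translation — these are genuine symplectomorphisms, giving the required structural reduction; I would note this also makes the fibration manifestly $G$-equivariant, which is what is needed downstream for the fiberwise Dehn twists.

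The main obstacle I anticipate is not any single computation but getting the affine/linear bookkeeping exactly right: the fiber of (\ref{fib}) over a nonzero covector $\eta\in\mathfrak{p}_\alpha^\perp$ is an \emph{affine} translate $\mathfrak{m}_\alpha + \eta$ inside $\mathfrak{t}^\perp$ sitting over a point of $G/P_\alpha$, and one must check that the ambient symplectic form restricted there still equals the standard $T^*\mathbb{P}^1$ form (up to the symplectomorphism twisting by $\eta$), rather than some $\eta$-dependent deformation that fails to be a cotangent form. This is handled by the observation that the orthogonal splitting $\mathfrak{t}^\perp = \mathfrak{m}_\alpha\oplus\mathfrak{p}_\alpha^\perp$ is $T$-invariant and Killing-orthogonal, so the mixed terms in $\omega$ along the fiber direction vanish and the fiberwise form is exactly the canonical one on $T^*(P_\alpha/T)$ with zero-section shifted — in particular independent of $\eta$ up to a fiber-preserving symplectomorphism. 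A clean way to package all of this is simply to invoke reduction in stages for the commuting Hamiltonian actions of the (right) $T$ and the residual $P_\alpha/T$ on $T^*G$, which simultaneously yields the fiber bundle structure, the fiberwise nondegeneracy, and the symplectomorphism structure group, so I would present the argument in that language and relegate the linear-algebra check of $\mathfrak{t}^\perp = \mathfrak{m}_\alpha\oplus\mathfrak{p}_\alpha^\perp$ to a line or two.
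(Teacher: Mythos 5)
Your core geometric observation is the right one and matches what the paper relies on when it writes ``it is easy to check'': the Killing-orthogonal, $T$-invariant splitting $\mathfrak{t}^\perp = \mathfrak{m}_\alpha\oplus\mathfrak{p}_\alpha^\perp$ (with $\mathfrak{m}_\alpha=\mathfrak{t}^\perp\cap\mathfrak{p}_\alpha$) kills the mixed terms and shows each fiber is a symplectic submanifold isomorphic to $T^*\mathbb{P}^1$. The trouble is the packaging you then reach for.

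\emph{Reduction in stages does not apply.} You invoke ``Marsden--Weinstein reduction in stages for the commuting Hamiltonian actions of $T$ and the residual $P_\alpha/T$,'' but $T$ is \emph{not} normal in $P_\alpha$: $P_\alpha/T\cong\mathbb{P}^1$ is a homogeneous space, not a group, and there is no residual Hamiltonian action on $T^*(G/T)$ that would make the fibers of (\ref{fib}) into Marsden--Weinstein quotients. A dimension count also signals this: the fibers are codimension-$4$ symplectic submanifolds, not quotients. So the sentence ``the fibers are the symplectic reductions $T^*(P_\alpha/T)$'' is false as stated, and the ``commuting actions'' framing cannot be repaired. Relatedly, the associated bundle $G\times_{P_\alpha}T^*(P_\alpha/T)$ is a bundle over $G/P_\alpha$, not over $T^*(G/P_\alpha)$, so it does not by itself identify the structure group of the fibration (\ref{fib}), whose base is $T^*(G/P_\alpha)$.

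\emph{The non-compactness issue is not addressed.} The paper's route is different and cleaner: it cites the standard result from Guillemin--Lerman--Sternberg that if the total space is symplectic and every fiber is a symplectic submanifold, one gets a symplectic connection; the only remaining issue is that on a non-compact total space the horizontal lifts might fail to integrate, and the paper dispatches this with the $\mathbb{R}_+$-contraction of $T^*(G/T)$ onto a compact region. Your proposal never confronts this completeness question, and once the (invalid) associated-bundle/reduction shortcut is removed, you need some argument of this kind to conclude that parallel transport is defined and lands in $\mathrm{Sympl}(T^*\mathbb{P}^1)$. So: keep the orthogonal-splitting computation as the verification that fibers are symplectic, drop the reduction-in-stages language, and add the observation that the conical $\mathbb{R}_+$-action makes horizontal vector fields complete.
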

\begin{proof}
For any smooth fibration $Y\rightarrow B$, when $Y$ is a symplectic manifold and each fiber is a symplectic submanifold, then there is a unique symplectic connection on the fibration defined by the symplectic complement to each fiber. If for any smooth curve $\gamma: [0,1]\rightarrow B$, the integration along the horizontal liftings of the tangent vector field of $\gamma$ exists for all time, then the fibration is a symplectic fibation. 

Let's check that each fiber of (\ref{fib}) is a symplectic submanifold and is isomorphic to $T^*\mathbb{P}^1$. Since the fibration is $G$-equivariant, we just need to check for the fiber over a point of the form $(e, s)\in G\times_{P_\alpha}\mathfrak{p}_\alpha^\perp$, where $e$ is the identity in $G$. The fiber is 
$$\{(p, \mathrm{Ad}_{p^{-1}}s+\xi)\in G\times_T\mathfrak{t}^\perp: p\in P_\alpha, \xi\in \ker p_\alpha\}.$$
The fiber can be canonically identified with $P_\alpha\times_T \ker p_\alpha$, the fiber at $(e,0)$, by forgetting the term $\mathrm{Ad}_{p^{-1}}s$, which preserves the respective restriction of the ambient symplectic form. Using the identity $(\mathfrak{p}_\alpha/\mathfrak{t})^*\cong \ker p_\alpha$, we see that $P_\alpha\times_T \ker p_\alpha$ is symplectically $T^*\mathbb{P}^1$.  

Lastly, for any smooth curve $\gamma: [0,1]\rightarrow T^*(G/P_\alpha)$ (without loss of generality, we assume $\gamma'(t)\neq 0, t\in[0,1]$), suppose there is a curve $\tilde{\gamma}: [0, t_1)\rightarrow T^*(G/T)$ that is an integral of the horizontal liftings of $\gamma'(t)$, but only exists up to $t_1<1$. Under the dilating  $\mathbb{R}_+$-action $\delta_s, s>0$, which preserves the symplectic fibration (up to scaling of the symplectic form) and contracts both $T^*(G/T)$ and $T^*(G/P_\alpha)$ towards their zero sections, the curve $\delta_s(\gamma)$ will be eventually very close to the zero section of $T^*(G/P_\alpha)$, and there exist $0<t_s^0<t_s^1<t_1$ for $0<s\ll 1$ such that
\begin{itemize} 
\item[(1)] $\delta_s(\tilde{\gamma})([0,t_s^0])\subset (T^*(G/T))|_{|\xi|\leq \epsilon}$ for a fixed very small $\epsilon>0$,
\item[(2)] $\delta_s(\tilde{\gamma})([0,t_s^1])\subset (T^*(G/T))|_{|\xi|\leq 1}$, and $\delta_s(\tilde{\gamma})(t_s^1)\in (T^*(G/T))|_{|\xi|=1}$,
\item[(3)] $t_s^1-t_s^0\rightarrow 0\text{ and } t_s^1\rightarrow t_1$ as $s\rightarrow 0$. 	  
\end{itemize}
Let $t_s^{M}\in [t_s^0, t_s^1]$ be a moment where the ratio $\frac{|(\delta_s(\tilde{\gamma}))'(t)|}{|(\delta_s(\gamma))'(t)|}$ reaches its maximum in $[t_s^0, t_s^1]$. Then there exists a sequence $s_n, s_n\rightarrow 0$ such that both
$$\lim\limits_{n\rightarrow \infty}\delta_s(\tilde{\gamma})(t_{s_n}^M), \lim\limits_{n\rightarrow \infty}\frac{(\delta_s(\gamma))'(t_{s_n}^M)}{|(\delta_s(\gamma))'(t_{s_n}^M)|}$$
exist. This would imply that 
$$\lim\limits_{n\rightarrow\infty} \frac{(\delta_s(\tilde{\gamma}))'(t_{s_n}^M)}{|(\delta_s(\gamma))'(t_{s_n}^M)|}$$
doesn't exist, for its length has limit $\infty$, but it should because it is the horizontal lifting of  $\lim\limits_{n\rightarrow \infty}\frac{(\delta_s(\gamma))'(t_{s_n}^M)}{|(\delta_s(\gamma))'(t_{s_n}^M)|}$. This  gives a contradiction to the existence of such a $\tilde{\gamma}$. 
\end{proof}

As before, we will use $(x,\xi), \xi\in \mathfrak{t}^\perp (\text{resp. }\mathfrak{p}_\alpha^\perp)$ to denote a point (up to equivalence relation) in $G\times_T\mathfrak{t}^\perp\cong T^*(G/T)$ (resp. $G\times_{P_\alpha}\mathfrak{p_\alpha}^\perp\cong T^*(G/P_\alpha)$). For each simple root $\alpha$ and $\xi\in\mathfrak{t}^\perp$, let $\xi_\alpha$ denote $-i(\xi-p_\alpha(\xi))$ for the projection $p_\alpha: \mathfrak{t}^\perp\rightarrow\mathfrak{p}_\alpha^\perp$. Now we can define a \emph{fiberwise Dehn twist} (the justification of the notion is included in the proof of Lemma \ref{tau_C}).
\begin{equation}\label{Dehn}
\tau_\alpha(x,\xi)=\begin{cases}&(x\exp(h(|\xi_\alpha|)\frac{\xi_\alpha}{|\xi_\alpha|}), \mathrm{Ad}_{\exp(-h(|\xi_\alpha|)\frac{\xi_\alpha}{|\xi_\alpha|})}\xi), \text{ if }\xi_\alpha\neq 0\\
&(x\exp(\frac{\pi}{2} E_\alpha),\mathrm{Ad}_{\exp(-\frac{\pi}{2} E_\alpha)}\xi),\text{ otherwise}
\end{cases},
\end{equation}
where $h:\mathbb{R}\rightarrow\mathbb{R}$ is a smooth increasing function satisfying $h(t)+h(-t)=\pi$ and $h(t)=\pi$ for $t\gg 0$, and $E_\alpha$ is any vector $v\in \mathfrak{p}_\alpha$ such that  $\exp(tv)\in T$ if and only if $t\in\mathbb{Z}\cdot\pi$. For example, if $G=SU(n)$, then $E_\alpha$ is of the form $e^{i\theta}\epsilon_{ij}-e^{-i\theta}\epsilon_{ji}$ for some $i,j$ with $i-j=1$, where $\epsilon_{ij}$ is the elementary matrix with all entries being zero except that the $(i,j)$-entry is 1. It is easy to check that $\tau_\alpha$ is well-defined, i.e. it doesn't depend on  the representative for a point in $G\times_T\mathfrak{t}^\perp$, and it preserves the fibration. The proof of the following Lemma also implies that parallel transport with respect to the canonical symplectic connection preserves $\tau_\alpha$, and in particular, $\tau_\alpha$ is smooth.

\begin{lemma}\label{tau_C}
$\tau_\alpha$ is a $G$-equivariant symplectomorphism of $T^*\mathcal{B}.$
\end{lemma}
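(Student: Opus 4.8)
The statement has three parts: $\tau_\alpha$ is a diffeomorphism, it is $G$-equivariant, and it preserves the symplectic form. The plan is to verify these in that order, reducing everything to a fiberwise computation on $T^*\mathbb{P}^1$ via the symplectic fibration (\ref{fib}).

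\emph{Well-definedness and the fiberwise model.} First I would fix notation for a single fiber: over a point $[\eta]\in T^*(G/P_\alpha)$ represented by $(x,\eta)$ with $\eta\in\mathfrak{p}_\alpha^\perp$, the fiber of (\ref{fib}) is identified with $T^*\mathbb{P}^1$, and the data $\xi_\alpha\in\mathfrak{p}_\alpha\cap\mathfrak{t}^\perp$ is exactly the ``fiber coordinate'': the assignment $\xi\mapsto\xi_\alpha$ is the composition $\mathfrak{t}^\perp\twoheadrightarrow\mathfrak{t}^\perp/\mathfrak{p}_\alpha^\perp\cong\mathfrak{p}_\alpha\cap\mathfrak{t}^\perp$. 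I would check that $\tau_\alpha$ moves $(x,\xi)$ only within its fiber, i.e. that $\mathrm{Ad}_{\exp(-h(|\xi_\alpha|)\xi_\alpha/|\xi_\alpha|)}\xi$ has the same image $\eta$ in $\mathfrak{p}_\alpha^\perp$ and that $x\exp(\cdots)$ has the same image in $G/P_\alpha$ — this follows because $\exp(h(|\xi_\alpha|)\xi_\alpha/|\xi_\alpha|)\in P_\alpha$ (it is $\exp$ of an element of $\mathfrak{p}_\alpha$). Well-definedness under $(x,\xi)\sim(xt,\mathrm{Ad}_{t^{-1}}\xi)$ for $t\in T$ is then the observation that $(\mathrm{Ad}_{t^{-1}}\xi)_\alpha=\mathrm{Ad}_{t^{-1}}\xi_\alpha$ (since $T$ normalizes both $\mathfrak{p}_\alpha$ and $\mathfrak{t}^\perp$), so the flow used in the definition gets conjugated by $t$ accordingly; the edge case $\xi_\alpha=0$ requires noting that $\exp(\pi E_\alpha)$ is well-defined up to $T$ because changing $E_\alpha$ by $\mathrm{Ad}_t$ changes $x\exp(\pi E_\alpha)$ compatibly. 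The key structural point is that on each fiber $T^*\mathbb{P}^1=T^*S^2$, the map $\tau_\alpha$ is literally the standard model Dehn twist: $(x,\xi_\alpha)\mapsto(x\exp(h(|\xi_\alpha|)\widehat{\xi_\alpha}),\mathrm{Ad}_{\exp(-h(|\xi_\alpha|)\widehat{\xi_\alpha})}\xi_\alpha)$ is the time-one map of the (normalized) geodesic/cogeodesic flow truncated by $h$, which is the standard symplectic description of the Dehn twist on $T^*S^2$; this justifies the name and immediately shows $\tau_\alpha$ is a smooth fiberwise symplectomorphism away from the zero section, with the $\xi_\alpha=0$ clause providing the (still smooth, by the $h(t)+h(-t)=\pi$ antipodal-matching condition) extension across it. That $\tau_\alpha$ is a bijection with smooth inverse follows because the model Dehn twist on $T^*S^2$ is invertible (its inverse is built from $\pi-h$), and inversion is fiberwise.

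\emph{$G$-equivariance.} The left $G$-action on $T^*(G/T)=G\times_T\mathfrak{t}^\perp$ is $g\cdot(x,\xi)=(gx,\xi)$. Since $\xi_\alpha$ depends only on $\xi$, not on $x$, the element $\exp(h(|\xi_\alpha|)\xi_\alpha/|\xi_\alpha|)$ is unchanged under $x\mapsto gx$, so $\tau_\alpha(gx,\xi)=(gx\exp(\cdots),\mathrm{Ad}_{\exp(-\cdots)}\xi)=g\cdot\tau_\alpha(x,\xi)$; the same check works on the $\xi_\alpha=0$ locus. So equivariance is essentially immediate from the chosen trivialization.

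\emph{The symplectic form.} This is the main obstacle. The cleanest route is: (i) $\tau_\alpha$ preserves the fibration (\ref{fib}) and restricts on each fiber to the standard Dehn twist, which is a symplectomorphism of $(T^*\mathbb{P}^1,\omega|_{\text{fiber}})$ — so $\tau_\alpha^*\omega$ and $\omega$ agree when restricted to any fiber tangent space; (ii) I must also control the ``horizontal'' directions. For this I would argue that $\tau_\alpha$ is in fact a composition of a fiberwise rescaled time-$t$ flow of a Hamiltonian that is fiberwise defined and globally smooth: writing $H=h'$-primitive-type function, the isotopy $\tau_{\alpha,t}$ is generated by the Hamiltonian vector field of a function built from $|\xi_\alpha|$ (more precisely, the analogue of the function generating the model Dehn twist, which is $G$-invariant and depends only on $|\xi_\alpha|$), so $\tau_\alpha=\tau_{\alpha,1}$ is the time-one map of a (time-independent) Hamiltonian flow and hence a symplectomorphism; the only care needed is smoothness of this generating function across $\xi_\alpha=0$, which again uses the $h(t)+h(-t)=\pi$ condition exactly as in the $T^*S^2$ model (Seidel's normalization). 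I would check directly that the vector field $\frac{d}{dt}\tau_{\alpha,t}$ equals the Hamiltonian vector field of this function by computing in the $(x,\xi)$-coordinates using the standard formula for $\omega$ on $G\times_T\mathfrak{t}^\perp$ in terms of the Maurer--Cartan form. The potential subtlety — whether the ``obvious'' Hamiltonian really is globally smooth and really generates $\tau_\alpha$ rather than just something fiberwise-correct — is where I expect the real work to be, and is presumably what the authors address by an explicit formula; the fallback, if the Hamiltonian description is awkward, is to verify $\tau_\alpha^*\omega=\omega$ by a direct pointwise computation of $d\tau_\alpha$ in the twisted-product coordinates, using that $\exp(-h(|\xi_\alpha|)\widehat{\xi_\alpha})$ depends on the point only through $\xi_\alpha$, which decouples the derivative into a fiber part (handled by the $T^*S^2$ model) plus a cross term that vanishes because $\omega$ pairs the fiber directions with the base directions compatibly with the projection $\mathfrak{t}^\perp\to\mathfrak{p}_\alpha^\perp$.
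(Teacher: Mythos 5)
Your plan takes essentially the same route as the paper: $G$-equivariance is immediate from $g\cdot(x,\xi)=(gx,\xi)$ since $\xi_\alpha$ depends only on $\xi$, and symplecticity is proved by realizing $\tau_\alpha$ as the time-1 map of a Hamiltonian isotopy generated by a function of $|\xi_\alpha|$. The step you flag as ``where the real work is'' --- confirming that the candidate Hamiltonian actually generates $X=\frac{d}{dt}\tau_{\alpha,t}$ in all directions, not just fiberwise --- is exactly the one the paper closes, and it does so by a short reduction rather than a direct computation: take $H=\tilde h(|\xi_\alpha|)$ with $\tilde h'=h$; then both $H$ and $X$ are $G$-invariant, and $X$ preserves $\mu$, so for every $\eta\in\mathfrak g$ one has $\omega(X,\hat\eta)=-d\langle\mu,\eta\rangle(X)=0=dH(\hat\eta)$. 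Since the $G$-orbit directions together with the vertical (cotangent-fiber) directions span $T_{(x,\xi)}T^*\mathcal B$, it only remains to verify $\iota_X\omega=dH$ on vertical $v$, where it is the one-line identity $dH(v)=h(|\xi_\alpha|)\langle v,\xi_\alpha\rangle/|\xi_\alpha|=\omega(X,v)$. So your ``fallback'' pointwise computation is not needed; the $G$-invariance plus moment-map-preservation argument is the clean way to kill the cross terms you were worried about. Your additional remarks on well-definedness under the $T$-equivalence relation and on the fiberwise Dehn-twist model are correct and consistent with the paper, though the paper leaves them to the reader.
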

\begin{proof}
The $G$-action is simply given by $g\cdot (x,\xi)=(gx,\xi)$ for $g\in G$, so it is clear that $\tau_\alpha$ is $G$-equivariant. Away from the locus where $\xi_\alpha=0$, we can add a parameter $t$ in all the parentheses of $\exp(\cdot)$ in (\ref{Dehn}) to get a one parameter family of diffeomorphism. Then it becomes the integral of some vector field $X$. We claim that $X$ is the Hamiltonian vector field of the Hamiltonian function $H=\widetilde{h}(|\xi_\alpha|)$, where $\widetilde{h}$ is an antiderivative of $h$, so $\tau_\alpha$ is the time-1 map of the Hamiltonian flow. To see this, we only need to check for every vertical vector $v$ in $T_{(x,\xi)}T^*\mathcal{B}$, because $X$ is $G$-equivariant and it preserves $\mu$, and this follows from the computation 
$$dH(v)=h(|\xi_\alpha|)\frac{\langle v,\xi_\alpha \rangle}{|\xi_\alpha|},\ \omega(X, v)=\langle h(|\xi_\alpha|)\frac{\xi_\alpha}{|\xi_\alpha|},v\rangle.$$
\end{proof}

\begin{lemma}
$\tau_\alpha$ is $G_\mathbb{C}$-equivariant at infinity.
\end{lemma}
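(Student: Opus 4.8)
The plan is to show that $L_{\tau_\alpha}^\infty \subset \mathcal{Z}^\infty$, i.e. that the graph of $\tau_\alpha$, compactified at infinity, is contained in the boundary at infinity of the Steinberg variety. The first step is to understand the behavior of $\tau_\alpha$ near the infinity of $T^*\mathcal{B}$, i.e. for $|\xi|\to\infty$. Away from the locus $\xi_\alpha = 0$, once $|\xi_\alpha|$ is large the function $h(|\xi_\alpha|)$ is identically $\pi$, so $\tau_\alpha(x,\xi) = (x\exp(\pi\tfrac{\xi_\alpha}{|\xi_\alpha|}), \mathrm{Ad}_{\exp(-\pi\frac{\xi_\alpha}{|\xi_\alpha|})}\xi)$; and on the locus $\xi_\alpha=0$ it is $(x\exp(\pi E_\alpha),\mathrm{Ad}_{\exp(-\pi E_\alpha)}\xi)$. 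I would first observe that in either case $\tau_\alpha$ preserves the moment map $\mu$ (this is already part of Lemma \ref{tau_C}) and, more importantly, that for $|\xi_\alpha|$ large $\tau_\alpha$ agrees with the \emph{monodromy of the $\mathbb{P}^1$-fibration} (\ref{fib}): moving distance $\pi$ along the geodesic generated by $\xi_\alpha/|\xi_\alpha|$ in the fiber $T^*\mathbb{P}^1$ is exactly a half-rotation of the cotangent sphere, which at infinity is the antipodal map on $T^\infty\mathbb{P}^1$. So, at infinity, $\tau_\alpha^\infty$ interchanges the two ends of each cotangent ray of the $\mathbb{P}^1$-fiber while being the identity in the $T^*(G/P_\alpha)$-directions.

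The second step is to identify $\mathcal{Z}^\infty$ concretely via the correspondence recalled in Remark \ref{relate}: $\mathcal{Z}$ is the union of conormals to the $\mathbf{W}$-many diagonal $G_\mathbb{C}$-orbits $\mathcal{O}_w$ in $\mathcal{B}\times\mathcal{B}$. The graph of the identity is the conormal to the diagonal $\mathcal{O}_e$, which sits inside $\mathcal{Z}$; and the conormal to the codimension-one orbit $\mathcal{O}_{s_\alpha}$ (the closure of which is the $\mathbb{P}^1$-bundle $G_\mathbb{C}/B\times_{G_\mathbb{C}/P_\alpha}G_\mathbb{C}/B$) is precisely the component of $\mathcal{Z}$ swept out by the family of conormal directions to the $\mathbb{P}^1$-fibers. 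I would show that the compactified graph $\overline{L_{\tau_\alpha}}$ approaches, at infinity, the union of $T^\infty_{\mathcal{O}_e}(\mathcal{B}\times\mathcal{B})$ (coming from the region where $\xi_\alpha/|\xi_\alpha|$ becomes negligible compared to the part of $\xi$ that survives the projection to $\mathfrak{p}_\alpha^\perp$, where $\tau_\alpha$ is $C^0$-close to the identity near infinity) and $T^\infty_{\mathcal{O}_{s_\alpha}}(\mathcal{B}\times\mathcal{B})$ (coming from the directions purely along the fiber, where the half-twist produces a conormal direction to the fiber $\mathbb{P}^1$, i.e. a covector annihilating $T(G_\mathbb{C}/P_\alpha)$). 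Concretely, I would take a sequence $y_n = (x_n,\xi_n, \tau_\alpha(x_n,\xi_n))\in L_{\tau_\alpha}$ with $|\xi_n|\to\infty$, pass to the limit of $[\xi_n]$ (and of the induced covector on the target copy of $\mathcal{B}$) in the projectivized cotangent bundle, and check case by case — according to whether $\lim [\xi_{n,\alpha}/|\xi_n|]$ is zero, nonzero, or the whole of $[\xi_n]$ — that the limiting point of $\overline{L_{\tau_\alpha}}$ lies in the conormal to $\mathcal{O}_e$ or to $\mathcal{O}_{s_\alpha}$. The key algebraic input is the explicit formula (\ref{Dehn}) together with the fact that the projection $\mathfrak{t}^\perp\to\mathfrak{p}_\alpha^\perp$ is $T$-equivariant and that $\exp(\pi\cdot)$ along a unit vector in the $\mathbb{P}^1$-fiber direction lands back in (a conjugate of) $T$, so that the twisted coordinate $(x\exp(h(|\xi_\alpha|)\tfrac{\xi_\alpha}{|\xi_\alpha|}),\mathrm{Ad}\,\xi)$ stays in a bounded distortion of $(x,\xi)$ modulo the fiber.

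The main obstacle, I expect, is the behavior near the locus $\xi_\alpha = 0$, i.e. along the zero section of the $\mathbb{P}^1$-fibration and its "infinity" — this is exactly the family of isotropic spheres that the Dehn twist is built from, and where $\tau_\alpha$ is only $C^1$ rather than smooth. There one has to argue that even though $h(|\xi_\alpha|)\tfrac{\xi_\alpha}{|\xi_\alpha|}$ has no limit as $\xi_\alpha\to 0$, the \emph{covector datum} at infinity still converges: one rescales by $|\xi|$ rather than by $|\xi_\alpha|$, uses that near $\xi_\alpha=0$ the surviving part $\xi-\xi_\alpha$ (the component in $\mathfrak{p}_\alpha^\perp$, which is what controls the position at infinity) is untouched by $\mathrm{Ad}_{\exp(-h(|\xi_\alpha|)\xi_\alpha/|\xi_\alpha|)}$ up to an $O(|\xi_\alpha|)$ error, and concludes that the limiting point again lands in $T^\infty_{\mathcal{O}_e}(\mathcal{B}\times\mathcal{B})$. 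A clean way to package all of this is to note that $\tau_\alpha$ is (fiberwise over $T^*(G/P_\alpha)$) isotopic \emph{at infinity} to the identity through the flow that realizes it, and that this isotopy moves every point of $T^\infty(\mathcal{B}\times\mathcal{B})$ within a single contact leaf lying in $\mathcal{Z}^\infty$; then $L_{\tau_\alpha}^\infty\subset\mathcal{Z}^\infty$ follows since $\mathcal{Z}^\infty$ is closed and contains both $T^\infty_{\mathcal{O}_e}$ and $T^\infty_{\mathcal{O}_{s_\alpha}}$. I would therefore organize the proof around the correspondence $G_\mathbb{C}/B \leftarrow G_\mathbb{C}/B\times_{G_\mathbb{C}/P_\alpha}G_\mathbb{C}/B\to G_\mathbb{C}/P_\alpha$, reducing the whole statement to the rank-one ($SL_2$) computation fiberwise, where it is the elementary fact that the compactly-supported Dehn twist on $T^*S^2$ is the identity at infinity plus the half-rotation swapping the two ends of each cotangent ray — both of which are visibly conormal to $G_\mathbb{C}$-orbits in $\mathbb{P}^1\times\mathbb{P}^1$.
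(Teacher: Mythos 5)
Your general framing is right — compactify the graph, examine limits of sequences $(x_n,\xi_n)$ with $|\xi_n|\to\infty$, and show the limits land in $\mathcal{Z}^\infty$ — but two things go wrong. First, your case decomposition is backwards. When $|\xi_\alpha|\gg 0$ the element $\exp(\pi\xi_\alpha/|\xi_\alpha|)$ lies in $T$, so $\tau_\alpha$ is literally the identity on that region (not a ``half-rotation''): the regime where $\tau_\alpha$ is trivial at infinity is where $\xi_{n,\alpha}$ is large. Conversely, as $\xi_\alpha\to 0$ one has $h(|\xi_\alpha|)\to\pi/2$ and $\tau_\alpha$ acts by (a cutoff of) the simple reflection representative $w_\alpha$, moving the base point within its $\mathbb{P}^1$-fiber — so this is exactly the regime where $\tau_\alpha$ is far from the identity and the limit leaves $T^\infty_{\mathcal{O}_e}$. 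Your sentence that the $T^\infty_{\mathcal{O}_e}$-component comes ``from the region where $\xi_\alpha$ becomes negligible \ldots where $\tau_\alpha$ is $C^0$-close to the identity'' has these two regimes swapped.

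Second, and this is the genuine gap: the claim that $\xi-\xi_\alpha$ ``is untouched by $\mathrm{Ad}_{\exp(-h(|\xi_\alpha|)\xi_\alpha/|\xi_\alpha|)}$ up to an $O(|\xi_\alpha|)$ error'' is false. As $\xi_\alpha\to 0$, $h(|\xi_\alpha|)$ tends to $\pi/2$, not to $0$; the group element $\exp(-h(|\xi_\alpha|)\xi_\alpha/|\xi_\alpha|)$ is bounded away from the identity, and its adjoint action rotates the $\beta$-root components of $\xi$ by an $O(1)$ amount (indeed $\mathrm{Ad}_{w_\alpha}$ takes $\mathfrak{g}_\beta$ to $\mathfrak{g}_{s_\alpha\beta}$, a different root space). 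So the covector itself is not preserved. What the paper actually proves is that the \emph{complex moment map} $\mu_\mathbb{C}$ is preserved up to $O(|\xi_\alpha|)$: writing $\xi^+$ for the $\mathfrak{n}$-part of $\xi$ under $\mathfrak{g}_\mathbb{C}=\mathfrak{h}_\mathbb{C}\oplus\mathfrak{n}\oplus\mathfrak{n}^-$, the key root-theoretic input is that, since $\alpha$ is simple, $\beta-n\alpha\notin\{0\}\cup\Delta^-$ for every positive root $\beta\neq\alpha$ and every $n\ge 0$, so conjugation by $\exp$ of the $\pm\alpha$-root directions commutes with $(\cdot)^+$ on the $\pm\beta$ root spaces; this collapses the difference $\mu_\mathbb{C}(\tau_\alpha(x,\xi))-\mu_\mathbb{C}(x,\xi)$ to $\mathrm{Ad}_x\bigl(\mathrm{Ad}_g\xi_\alpha^+-\xi_\alpha^+\bigr)$, which is $O(|\xi_\alpha|)$. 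Working from the fiber-product description $\mathcal{Z}=\tilde{\mathcal{N}}\times_{\mathcal{N}}\tilde{\mathcal{N}}$ (rather than the conormal-variety description you use) is precisely what makes this estimate suffice: one only has to match $\mu_\mathbb{C}$-values asymptotically, not identify which $\mathcal{O}_w$-conormal the limit sits on. Your alternative packaging via ``the isotopy moves every point within a single contact leaf lying in $\mathcal{Z}^\infty$'' would require the same estimate at every intermediate time and so does not avoid the issue; also note the statement cannot be reduced to an $SL_2$ computation fiberwise, since $\mathrm{Ad}_g$ mixes all the root components of $\xi$, not just the $\alpha$-component.
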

\begin{proof}
Let $(x_n,\xi_n)$ be a sequence of points approaching $(x_\infty,\xi_\infty)\in T^\infty\mathcal{B}$, i.e. with appropriate choices of representatives, we have $\lim\limits_{n\rightarrow\infty}x_n=x_\infty$, $\lim\limits_{n\rightarrow\infty}|\xi_n|=\infty$ and $\lim\limits_{n\rightarrow\infty}\frac{\xi_n}{|\xi_n|}=\xi_\infty$. Here we have identified $T^\infty\mathcal{B}$ with the unit co-sphere bundle.

There are two cases. The first case is $\lim\limits_{n\rightarrow\infty}\frac{|\xi_{n,\alpha}|}{|\xi_n|}\neq 0$, then $\lim\limits_{n\rightarrow\infty}\tau_\alpha(x_n,\xi_n)=(x_\infty,\xi_\infty)$, and from here it is clear that $\tau_\alpha$ satisfies the partially compactly supported condition. The other case is $\lim\limits_{n\rightarrow\infty}\frac{|\xi_{n,\alpha}|}{|\xi_n|}= 0$. Let $\Phi$ be the set of roots, $\mathfrak{g}_\mathbb{C}=\mathfrak{h}_\mathbb{C}\oplus \bigoplus\limits_{\alpha\in\Phi}\mathfrak{g}_\alpha$ be the root space decomposition, and $\Delta$(resp. $\Delta^-$) be the set of positive(resp. negative) roots. 
Using the compact form $\mathfrak{g}$, one can define an $\mathbb{R}$-linear operator on  $\mathfrak{g}_\mathbb{C}\simeq \mathfrak{g}\otimes\mathbb{C}$ sending $X+iY$ to $(X+iY)^*:=-X+iY$. 
By standard fact, one can choose a basis for $\mathfrak{g}_\mathbb{C}$ as $\{e_\alpha\in \mathfrak{g}_\alpha, f_\alpha\in \mathfrak{g}_{-\alpha}, h_\alpha=[e_\alpha, f_\alpha]\in\mathfrak{h}_\mathbb{C}\}_{\alpha\in \Delta}$, where $(e_\alpha, f_\alpha, h_\alpha)$ forms a $\mathfrak{sl}_2$-triple and $f_\alpha=e_\alpha^*$ for all $\alpha\in\Delta$. Then $\mathfrak{t}^\perp$ is generated (over $\mathbb{R}$) by $\{-\frac{1}{2}(e_\alpha+f_\alpha), \frac{i}{2}(f_\alpha-e_\alpha)\}_{\alpha\in\Delta}$. 
 
For any $\xi\in \mathfrak{t}^\perp$, let $\xi^+$ be the portion of $-i\xi$ in $\mathfrak{n}$ under the decomposition $\mathfrak{g}_\mathbb{C}=\mathfrak{h}_\mathbb{C}+\mathfrak{n}+\mathfrak{n}^-$. Recall that $\mu_\mathbb{C}(x,\xi)=-2i\mathrm{Ad}_x(\xi^+)$. Now we need to show that 
\begin{equation}\label{asymp}
|\mu_\mathbb{C}(\tau_\alpha(x,\xi_n))-\mu_\mathbb{C}(x,\xi_n)|/|\xi_n|\rightarrow 0 \text{ as }n\rightarrow \infty,
\end{equation} for any fixed norm on $\mathfrak{g}_\mathbb{C}$. 
Given any $\alpha\in S$ (the set of simple roots) and $\beta(\neq \alpha)\in \Delta$, we have 
\begin{equation}\nonumber
(\mathrm{Ad}_{\exp (ae_\alpha-\bar{a}f_\alpha)}(be_\beta+\bar{b}f_\beta))^+=\mathrm{Ad}_{\exp (ae_\alpha-\bar{a}f_\alpha)}(be_\beta).
\end{equation}
This holds by the standard formula
$$\mathrm{Ad}_{\exp(X)}Y=Y+[X,Y]+\frac{1}{2!}[X,[X,Y]]+\frac{1}{3!}[X,[X,[X,Y]]]+\cdots,$$
and the fact that $\beta-n\alpha\notin \{0\}\cup\Delta^-$ for any $n\in\mathbb{Z}_{\geq 0}$. 
Write $$\xi_n=\xi_{n,\alpha}+\sum\limits_{\beta(\neq \alpha)\in\Delta}(b_\beta e_\beta+\bar{b}_\beta f_\beta)),$$ then 
$$\mu_\mathbb{C}(\tau_\alpha(x,\xi_n))-\mu_\mathbb{C}(x,\xi_n)=\begin{cases}&-2i\mathrm{Ad}_x(\mathrm{Ad}_{\exp(h(|\xi_{n,\alpha}|)\frac{\xi_{n,\alpha}}{|\xi_{n,\alpha}|})}\xi_{n,\alpha}^+- \xi_{n,\alpha}^+),\text{ if }\xi_{n,\alpha}\neq 0,\\
&0, \text{ otherwise},
\end{cases}$$
hence (\ref{asymp}) holds. 

\end{proof}

\begin{cor}\label{surj}
$\beta_G$ is surjective for $G=SU(n)$.
\end{cor}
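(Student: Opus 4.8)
The plan is to show that the images $\beta_G(\tau_\alpha)$, as $\alpha$ ranges over the simple roots, are exactly the standard Artin generators of $B_{\mathbf{W}} = B_n$ (for $G = SU(n)$), together with a verification that they satisfy — and only satisfy — the braid relations, so that they generate the whole braid group. Since $\tau_\alpha \in \mathrm{Sympl}^G_\mathcal{Z}(T^*\mathcal{B})$ by the preceding three lemmas, this immediately yields surjectivity of the homomorphism $\beta_G$ of (\ref{beta_G}).

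First I would analyze the effect of $\tau_\alpha$ on the reduced space $M_{s\cdot p_n}$ and on the local chart $\mathcal{U} \simeq \Sigma \times B_\epsilon$ used to define $\beta_G$. Recall that $\beta_G(\varphi) = [\varphi_s|_\Sigma]$, where $\Sigma$ is a disc with $n$ marked points coming from the singular loci of $\mu$, and the marked points correspond to the $n$ fixed points of the various $S^1$-actions on a subregular Springer fiber $\sigma \subset S$. The key observation is that the fiberwise Dehn twist $\tau_\alpha$ is supported, in the relevant asymptotic region, on the $\mathbb{P}^1$-fiber of (\ref{fib}) associated to $\alpha$, and on $M_{s\cdot p_n}$ it acts (after the homotopy normalizing near $\partial\mathcal{U}_s$) as a genuine Dehn twist along an embedded circle in $\Sigma$ enclosing precisely two consecutive marked points — the two points swapped by the simple reflection $s_\alpha \in \mathbf{W}$. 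Concretely, for $\alpha = \alpha_i$ the $i$-th simple root of $\mathfrak{su}(n)$, the vanishing cycle is the isotropic $S^2$ over $T^*(G/P_{\alpha_i})$ described in Remark \ref{relate}, which projects in $\Sigma$ to a curve separating the $i$-th and $(i+1)$-st marked points from the rest. Thus $\beta_G(\tau_{\alpha_i})$ is the half-twist $\sigma_i$ interchanging strands $i$ and $i+1$, i.e. the $i$-th Artin generator of $B_n$. I would carry this out by tracking explicitly, in the coordinates $z_n$ of (\ref{z_n}) and the disc-bundle description in the proof of Lemma \ref{codim1}, which marked points lie in the image of the $\alpha$-fiber.

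Once the generators $\sigma_1, \dots, \sigma_{n-1}$ are identified as $\beta_G(\tau_{\alpha_1}), \dots, \beta_G(\tau_{\alpha_{n-1}})$, surjectivity follows because these elements generate $B_n = B_{\mathbf{W}}$. For completeness one may also note the compatibility with the braid relations: $\beta_G$ being a group homomorphism, $\beta_G(\tau_{\alpha_i}\tau_{\alpha_{i+1}}\tau_{\alpha_i}) = \beta_G(\tau_{\alpha_{i+1}}\tau_{\alpha_i}\tau_{\alpha_{i+1}})$ and $\beta_G(\tau_{\alpha_i})$ commutes with $\beta_G(\tau_{\alpha_j})$ for $|i-j|\geq 2$ automatically, since the corresponding half-twists already satisfy these relations in $B_n$ — there is nothing to check on the symplectic side for surjectivity, only the identification of a generating set.

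The main obstacle I expect is the precise geometric identification of $\beta_G(\tau_\alpha)$ with a half-twist: one must show that after restricting $\tau_{\alpha,s}$ to $\mathcal{U}_s$ and performing the small homotopy near $\partial\mathcal{U}_s$ (which is legitimate because $\tau_\alpha$ is $G_\mathbb{C}$-equivariant at infinity and hence tends to the identity outside $\mathcal{U}_s$ as $s\to\infty$), the resulting symplectomorphism of $\Sigma$ is isotopic rel boundary to the standard half-twist on two adjacent marked points — and in particular that it is \emph{not} isotopic to the identity, i.e. $\beta_G(\tau_\alpha)\neq 1$. This requires understanding how the $\mathbb{P}^1$-fiber of (\ref{fib}) meets the chosen transverse disc $\Sigma$, and that the Dehn-twist profile function $h$ in (\ref{Dehn}) produces a full half-twist of the two marked points rather than a trivial loop; this is essentially the content of the remark that $\tau_\alpha$ is a fiberwise Dehn twist, i.e. restricts on each cotangent $\mathbb{P}^1$-fiber to the model Dehn twist of $T^*S^2$. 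Modulo this identification, which is where the real work lies, the corollary is immediate.
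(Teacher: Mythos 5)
Your overall strategy — show that $\beta_G(\tau_{\alpha_i})$ are the Artin generators of $B_n$ and conclude by generation — is the same as the paper's. However, your geometric description of $\beta_G(\tau_{\alpha_i})$ is internally inconsistent, and if read literally it would break the argument. You write that on $\Sigma$ the map acts ``as a genuine Dehn twist along an embedded circle enclosing precisely two consecutive marked points,'' and in the next sentence that this ``is the half-twist $\sigma_i$.'' These are different elements of the mapping class group of the marked disc: the Dehn twist along a circle enclosing $Q_i$ and $Q_{i+1}$ fixes each marked point and equals $\sigma_i^2$, whereas the half-twist $\sigma_i$ exchanges them. If the Dehn-twist description were correct, all the images $\beta_G(\tau_{\alpha_i})$ would lie in (a subgroup of) the pure braid group $P_n\subsetneq B_n$ and surjectivity would fail. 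The correct picture — established in the paper by an explicit matrix computation — is the half-twist: $\tau_{\alpha_1}$ reverses the oriented edge from $Q_2$ to $Q_3$ (exchanging the two marked points), and the remaining edges are carried to curves realizing a half-twist, as in Figure~\ref{braid}. The ``Dehn twist'' terminology refers to the model $T^*S^2$, where the compactly supported Dehn twist acts on the zero section by the antipodal map; its shadow on $\Sigma$ is therefore a point-exchange (a half-twist), not a Dehn twist about a separating circle.

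The second issue is that you explicitly defer the verification (``Modulo this identification, which is where the real work lies, the corollary is immediate''), but that verification is precisely the content of the paper's proof. For $SU(3)$, the paper computes the action of $\tau_{\alpha_1}$ on the edges $\overline{Q_1Q_2}$, $\overline{Q_2Q_3}$, $\overline{Q_3Q_1}$ in the matrix coordinates of (\ref{z_n}) and tracks intersections with the triangle to pin down the braid (Figure~\ref{braid}); for $SU(n)$ it then reduces to the $SU(3)$ case by gluing the local transverse surfaces $\Sigma_i$ obtained from the projections $f_i: M_{p_n}\dashrightarrow M_{p_3}$ (Figure~\ref{glue}), and checks the effect of each $\tau_{\alpha_i}$ on the glued picture. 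Your plan to ``track explicitly in the coordinates $z_n$'' is in the right spirit, but you have not identified the reduction from $SU(n)$ to $SU(3)$, which is the structural device that makes the general-$n$ computation tractable.
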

\begin{proof}
First we prove for the case of $G=SU(3)$. The projection of the union of subregular Springer fibers and their image under the Weyl group action in $M_{N\cdot p_3}$ is a triangle with vertices  $Q_1=(x_1, N\begin{bmatrix}0&0&0\\ 0&0&1\\ 0&1&0\end{bmatrix})$,  $Q_2=(x_2, N\begin{bmatrix}0&0&1\\ 0&0&0\\ 1&0&0\end{bmatrix})$, and $Q_3=(x_3, N\begin{bmatrix}0&1&0\\ 1&0&0\\ 0&0&0\end{bmatrix})$. The edges are $\overline{Q_1Q_2}=\{(x, N\begin{bmatrix}0&0&a\\ 0&0&b\\ \bar{a}&\bar{b}&0\end{bmatrix}): |a|^2+|b|^2=1\}$, $\overline{Q_2Q_3}=\{(x, N\begin{bmatrix}0&a&b\\ \bar{a}&0&0\\ \bar{b}&0&0\end{bmatrix}): |a|^2+|b|^2=1\}$, and $\overline{Q_3Q_1}=\{(x, N\begin{bmatrix}0&a&0\\ \bar{a}&0&b\\ 0&\bar{b}&0\end{bmatrix}): |a|^2+|b|^2=1\}$, the first two of which are the projections of the subregular Springer fibers. 

If we fix a representative $x=\begin{bmatrix}\frac{1}{\sqrt{2}}&0&\frac{1}{\sqrt{2}}\\
\frac{\bar{a}}{\sqrt{2}}&-b&-\frac{\bar{a}}{\sqrt{2}}\\
\frac{\bar{b}}{\sqrt{2}}&a&-\frac{\bar{b}}{\sqrt{2}}
\end{bmatrix}$ for $\xi=\begin{bmatrix}0&a&b\\ \bar{a}&0&0\\ \bar{b}&0&0\end{bmatrix}$, and  $x=\begin{bmatrix}\frac{a}{\sqrt{2}}&\bar{b}&\frac{a}{\sqrt{2}}\\
\frac{b}{\sqrt{2}}&-\bar{a}&\frac{b}{\sqrt{2}}\\
\frac{1}{\sqrt{2}}&0&-\frac{1}{\sqrt{2}}
\end{bmatrix}$ for $\xi=\begin{bmatrix}0&0&a\\ 0&0&b\\ \bar{a}&\bar{b}&0\end{bmatrix}$. Then
$e^{\frac{1}{3}i\theta\mathrm{diag}(1,-2,1)}$ acts on $\xi$ by $\begin{bmatrix}0&e^{i\theta}a&b\\ \overline{e^{i\theta}a}&0&0\\ \bar{b}&0&0\end{bmatrix}$ in the first case, and $\begin{bmatrix}0&0&a\\ 0&0&e^{-i\theta}b\\ \bar{a}&\overline{e^{-i\theta}b}&0\end{bmatrix}$ in the second case. This means we can identify a neighborhood of $Q_2$ in $(\mu^{-1}(p_3+\mathbb{R}_{(-\epsilon,\epsilon)}\cdot \frac{1}{2}\mathrm{diag}(1,-2,1)))/\{e^{i\theta\mathrm{diag}(1,0,-1)}\}$ with a neighborhood of the origin in $(\mathbb{C}^2, \omega=\mathrm{Re}(\lambda dz_1\wedge dz_2))$ as $S^1$-equivariant symplectic manifolds, in which $z_2=0, z_1\leftrightarrow a$ over $\overline{Q_2Q_3}$ and $z_1=0, z_2\leftrightarrow b$ over $\overline{Q_1Q_2}$ and $\lambda$ is some complex number. 

We can find out $\lambda$ by comparing the moment maps. By a direct calculation, the matrix $\begin{bmatrix}0&z_1&\alpha\\ \bar{z}_1&0&z_2\\ \bar{\alpha}&\bar{z}_2&0\end{bmatrix}$ is conjugate to $\mathrm{diag}(1,0,-1)+\frac{t}{2}\mathrm{diag}(1,-2,1), t\in (-\epsilon,\epsilon)$ if and only if $|z_1|^2+|z_2|^2+|\alpha|^2=1+\frac{3}{4}t^2$ and $\alpha\bar{z}_1\bar{z}_2+\bar{\alpha}z_1z_2=t-\frac{t^3}{4}. $ By quotienting out the action of $\{e^{i\theta\mathrm{diag}(1,0,-1)}\}$, we can make $\alpha>0$ near $Q_2$, so the moment map is roughly given by $t\approx 2\mathrm{Re}(z_1z_2)$. The action of $e^{i\theta}(z_1, z_2)=(e^{i\theta}z_1, e^{-i\theta}z_2)$ on $(\mathbb{C}^2, \omega=\mathrm{Re}(\lambda dz_1\wedge dz_2))$ has moment map $\mathrm{Re}(i\lambda z_1z_2)$, so we should put $\lambda=-2i.$ Then the reduced space can be identified with a disc with the standard symplectic form in which the projection of $z_2=0$ (resp. $z_1=0$) maps to the negative (resp. positive) real line, and $\{z_1z_2\in i\mathbb{R}_+\}$ (resp. $\{z_1z_2\in i\mathbb{R}_-\}$) maps to the lower-half (resp. upper-half) plane\footnote{The reduced space is singular at $0$, but we can still identify it with a standard symplectic disc. This is discussed in more details in Section \ref{blup}.}.

Now let us see how $\tau_{\alpha_1}$ acts on the edges $\overline{Q_1Q_2}$ and $\overline{Q_2Q_3}$, where $\alpha_1$ is the simple root whose simple reflection corresponds to the Weyl group element that permutes the first and the second rows and columns. It is easy to see then $\tau_{\alpha_1} (\overset{\longrightarrow}{Q_1Q_2})=\overset{\longrightarrow}{Q_2Q_1}$. For $\overset{\longrightarrow}{Q_2Q_3}$, we have 
$$\tau_{\alpha_1}: N\begin{bmatrix}0&ia&b\\ -ia&0&0\\ b&0&0\end{bmatrix}\mapsto
N\begin{bmatrix}0&ia&b\cos(h(Na))\\ -ia&0&-b\sin(h(N a))\\ b\cos(h(N a))&-b\sin(h(N a))&0\end{bmatrix},$$
where we only record the $\xi$-component of the points, and $a,b$ are all nonnegative real numbers in the representatives. Note that the image never intersects the interior of  $\overline{Q_1Q_2}$ or $\overline{Q_3Q_1}$, and it intersects $\overline{Q_2Q_3}$ on the interval where $h(Na)=\pi$ and this is exactly when $b$ is sufficiently small. 
Using the same method, one can test the intersection of $\tau_{\alpha_1}(\overset{\longrightarrow}{Q_3Q_1})$ with $\overline{Q_iQ_j}, i\neq j$. From these and the fact that $-iab\sin(h(Na))\in i\mathbb{R}_-$, we can conclude that the picture for large $N$ is as in Figure \ref{braid}. One gets a similar picture of $\tau_{\alpha_2}$ for the other simple root $\alpha_2$, thus we complete the proof for $G=SU(3)$.
\begin{figure}
\centering
  \begin{overpic}[width=3.5in]{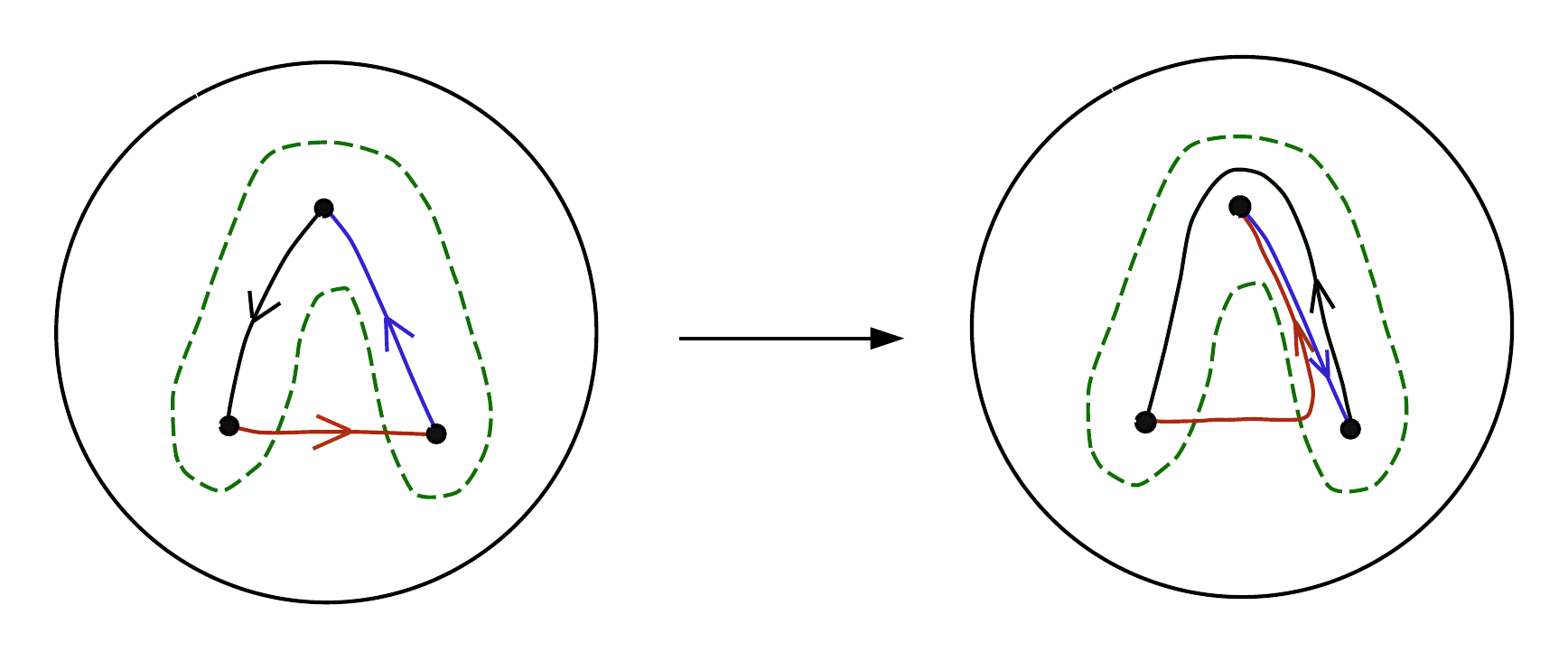}
 \put(68,12){$Q_3$}
 \put(80,29){$Q_2$}
 \put(88,12){$Q_1$}
  \put(9,12){$Q_3$}
 \put(21,29){$Q_2$}
  \put(29,12){$Q_1$}  
 \put(47,23){$\tau_{\alpha_1}$}  
\end{overpic}
\caption{The transformation of the triangle $\overline{Q_1Q_2Q_3}$ under $\tau_{\alpha_1}$.}\label{braid}
\end{figure}

For $G=SU(n)$, we look at $\varphi_s|_{\Sigma_0}$ as in Section \ref{constr beta_G}. For every three consecutive marked points $Q_i, Q_{i+1}, Q_{i+2}$, where $1\leq i\leq n-2$, we look at the submatrix consisting of the entries in the $i,(i+1), (i+2)$-th rows and columns in the $\xi$-component,  this  reduces the situation to $G=SU(3)$; see Figure \ref{glue}. It is straightforward to check that $\tau_{\alpha_i}$, for the simple root $\alpha_i$ whose simple reflection corresponds to $(i,i+1)\in S_n\cong\mathbf{W}$, reverses $\overset{\longrightarrow}{Q_iQ_{i+1}}$, keeps the isotopy classes of $\overset{\longrightarrow}{Q_jQ_{j+1}}$ for $j<i-1$ and $j>i+1$, and the image of $\overset{\longrightarrow}{Q_jQ_{j+1}}$ for $j=i-1, i+1$ is similar to that in Figure \ref{braid}. Therefore, $\{\tau_{\alpha_i}\}_{i=1}^{n-1}$ generates $B_\mathbf{W}$.
\begin{figure}
\centering
  \begin{overpic}[width=5in]{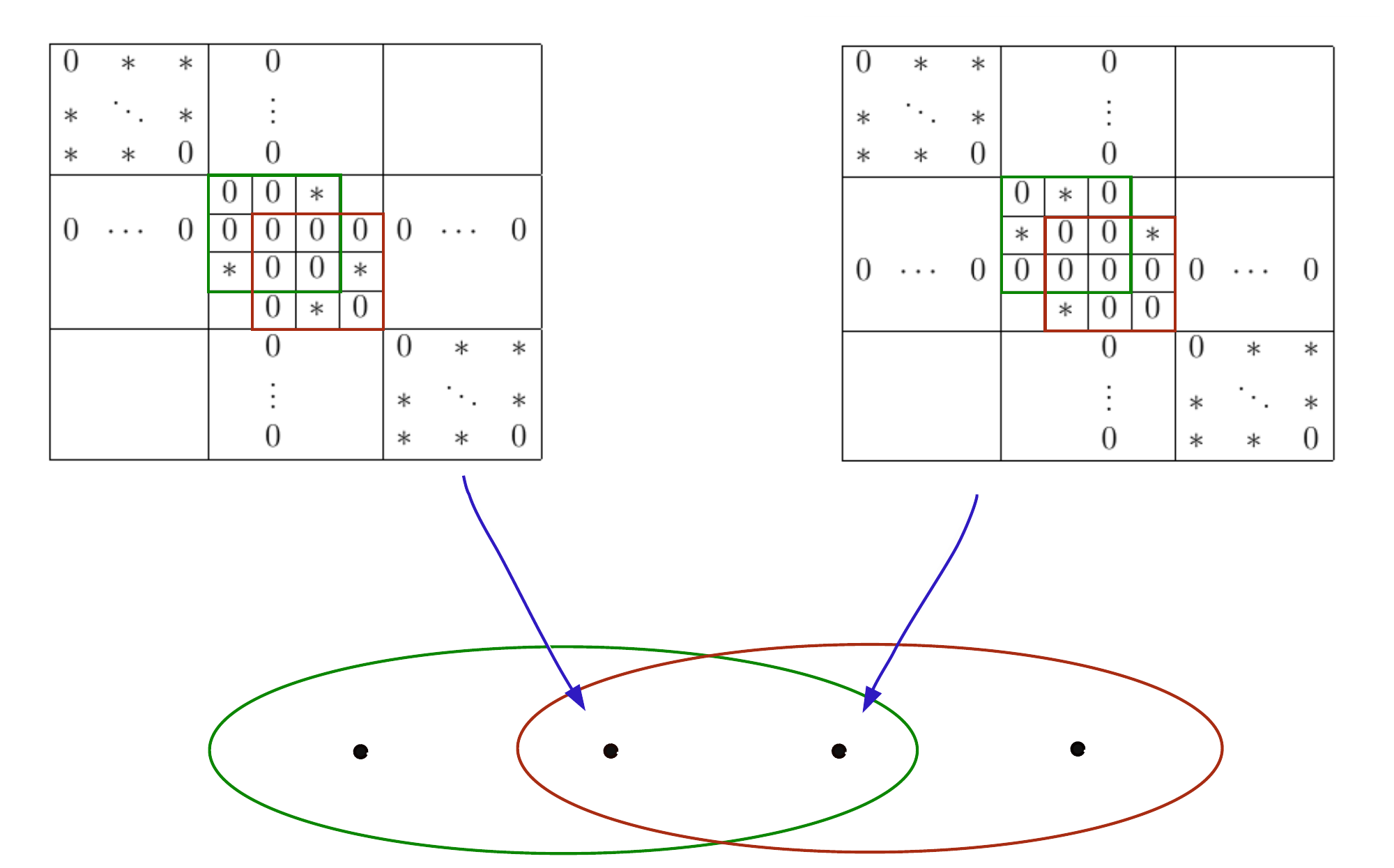}  
  \put(1,45){$i$}
  \put(19,60){$i$}
  \put(53,42){$i+1$}
  \put(77,60){$i+1$}
  \put(23,5){$Q_{i-1}$} 
  \put(43,5){$Q_i$}
  \put(57,5){$Q_{i+1}$} 
  \put(74,5){$Q_{i+2}$}
  \end{overpic}
\caption{Local picture of $\Sigma_0$.}\label{glue}
\end{figure}

\end{proof}

\section{$\beta_G$ is a homotopy equivalence for $G=SU(3)$}
In this section, we will prove that $\mathrm{ker}\beta_G$ is contractible for $G=SU(3)$. We first review the Duistermaat-Heckman theorem and prove some basic facts for equivariant symplectomorphisms in Section \ref{D-H}. Then we divide a Weyl chamber $W$ into three parts: one around the walls, one near the singular values of $\mu$, and the other for the regular subcones. We construct symplectic local charts for their preimages under $\mu$ and trivialize the reduced spaces via the Duistermaat-Heckman theorem. We also use the technique of real blowing up to study the ``symplectomorphisms" of the reduced space over a singular value. These are done in Section \ref{sec_triv}. Lastly, in Section \ref{main_pf}, we give the proof that $\ker\beta_G$ is contractible. This is accomplished by finding the homotopy type of the symplectomorphism groups over the local charts and realizing $\ker\beta_G$ as the fiber product of these spaces. 

\subsection{Duistermaat-Heckman theorem and equivariant symplectomorphisms}\label{D-H}
Let's briefly recall the Duistermaat-Heckman theorem (c.f. \cite{GuSt89}) on the local model of the moment map near a regular value for a quasi-free Hamiltonian $T$-action. Here \emph{quasi-free} means that the stabilizer of any point is a connected subgroup of $T$.

First, the local model is the following. Let $\pi: P\rightarrow M$ be a principal $T$-bundle over a symplectic manifold $(M,\omega_0)$, with a connection form $\alpha\in\Omega^1(P,\mathfrak{t})$.  Equip $P\times \mathfrak{t}^*$ with the closed 2-form $\omega=\pi^*\omega_0+d(\tau\cdot \alpha)$, where $\pi^*\omega_0$ denotes the pull-back form under the projection $P\times\mathfrak{t}^*\rightarrow P$, and $\tau$ denotes a point in $\mathfrak{t}^*$. Since  $\omega$ is nondegenerate on $\tau=0$, there is a neighborhood $U\subset \mathfrak{t}^*$ around $0$ such that $\omega$ is a symplectic form on $P\times U$. Then the moment map for the $T$-action on $P\times U$ is given by the projection to the second factor. 

Now suppose $0$ is a regular value of a moment map $\mu: X\rightarrow \mathfrak{t}^*$ for a quasi-free Hamiltonian $T$-action on a symplectic manifold $(X,\omega_X)$. We assume that $\mu$ is proper. Then $P=\mu^{-1}(0)$ is a principal $T$-bundle over the reduced space $M_0$. Any connection form $\alpha$ on $P$ defines a trivial $T$-invariant normal bundle $F$, by $\omega: TX\overset{\sim}{\rightarrow} T^*X$. Then there is a $T$-equivariant diffeomorphism (a fiber bundle map over $U$) $\psi$ between $\mu^{-1}(U)$ and $P\times U$, for a small neighborhood $U\subset \mathfrak{t}^*$ of $0$, such that $\psi|_{P\times \{0\}}=\mathrm{id}$ and $dp_1\circ d\psi(v)=0$ for any normal vector in $F$, where $p_1: P\times U\rightarrow P$ is the projection to the first factor. Now take the above constructed $\omega$ on $P\times U$ from $\alpha$. We have $\psi^*\omega$ and $\omega_X$ agree on $P\times \{0\}$. Therefore, by the equivariant version of Moser's argument, the two manifolds are $T$-equivariantly symplectomorphic in a neighborhood of $P\times\{0\}$, and the symplectomorphism can be chosen to be the identity on  $P\times\{0\}$.

Fix a pair of dual coordinates $(t^i)_{i=1}^k$ and $(t_i)_{i=1}^k$ on $\mathfrak{t}$ and $\mathfrak{t}^*$ respectively. Let  \index{$\mathscr{C}(U, T)$}{$\mathscr{C}(U, T)$} be the subgroup of $C^\infty(U,T)$ coming from exponentiating the functions $(f_1,...,f_k)\in C^\infty(U,\mathfrak{t})$ that satisfy $\sum f_idt_i$ is an exact 1-form in $\Omega^1(U, \mathbb{R})$. In particular, it can be identified with $C_0^\infty(U, \mathbb{R})/\bigoplus\limits_{i=1}^k 2\pi\mathbb{Z}\cdot t_i$, where $C_0^\infty(U, \mathbb{R})$ means the space of all smooth functions that vanish at the origin. Note that $\mathscr{C}(U, T)$ is homotopy equivalent to $C^\infty(U,T)$, and they are identical if $T$ is of rank 1. 

\begin{prop}\label{equiv_sympl}
Assume that $H^1(M,\mathbb{R})=0$. Given a smooth family of symplectomorphisms $\{\varphi_\tau\}_{\tau\in U}$ of $M$ which preserve $F_\alpha$, there exists a $T$-equivariant symplectomorphism $\widetilde{\varphi}$ of $P\times U$ such that its induced map on the reduced space at $\tau$ is $\varphi_\tau$. The space of such $\varphi$ is a torsor over $\mathscr{C}(U, T)$.  
\end{prop}
\begin{proof}
Suppose the vector $\partial_{t_i}$ at $(x,\tau_0)$ is sent to $\partial_{t_i}+\sum\theta^i_j\partial_{t^j}+\widetilde{X}_i$, where $\widetilde{X}_i$ is the horizontal lifting of $X_i=\varphi_{\tau_0*}(\frac{d}{dt_i}|_{t_i=0}\varphi_{\tau_0}^{-1}\varphi_{\tau_0+t_i}\pi(x))$, and any horizonal lifting $\widetilde{X}$ of $X\in TM$ is sent to $\widetilde{\varphi_{\tau_0*}X}+\sum\theta^X_i\partial_{t^i}$.The condition for $\widetilde{\varphi}$ to preserve the symplectic form is that it preserves the symplectic pairing of $\partial_{t_i}, \partial_{t_j}$ and that of $\partial_{t_i}, X$ for each $i,j$ and $X$. This is the same as saying the followings
\begin{equation}\label{equiv_1}
-\theta_i^j+\theta^i_j+\omega_{\tau_0}(X_i,X_j)=0.
\end{equation}
\begin{equation}\label{equiv_2}
-\theta^X_i+\omega_{\tau_0}(X_i, X)=0.
\end{equation}
Since $X_i$ preserves $\omega_{\tau_0}$ and $H^1(M,\mathbb{R})=0$, we have  $i_{X_i}\omega_{\tau_0}=dH_{\tau_0,i}$ for a Hamiltonian function $H_{\tau_0,i}$. So (\ref{equiv_2}) is the same as 
\begin{equation}\label{equiv_3}
\widetilde{\varphi}_{\tau_0}^*\alpha-\alpha=\pi^*d(H_{\tau_0,i})_{i=1}^k\in \Omega^1(P, \mathfrak{t}),
\end{equation}
which can be easily satisfied by composing a gauge transformation with any lifting $\widetilde{\varphi}_{\tau_0}$ we started with. Now we start from any $\widetilde{\varphi}$ satisfying (\ref{equiv_3}), and we have $$\widetilde{\varphi}^*\omega-\omega=\sum\limits_{i<j}f_{ij}dt_i\wedge dt_j=d(\sum\limits_{i=1}^k g_i(\tau)dt_i).$$ 
Applying Moser's argument for $\omega_s=(1-s)\cdot\omega+s\cdot\widetilde{\varphi}^*\omega$ and the primitive  $\sigma_s=\sum\limits_{i=1}^k g_i(\tau)dt_i$, we get one desired $\widetilde{\varphi}$. 

If $\{\varphi_\tau\}_{\tau\in U}=\{id\}_{\tau\in U}$, then (\ref{equiv_1}) and $(\ref{equiv_2})$ imply that $\widetilde{\varphi}\in C^\infty(U, T)$, and a lifting of it to $(f_1,...,f_k)\in C^\infty(U,\mathfrak{t})$ satisfies that $\sum\limits_{i=1}^k f_idt_i$ is exact. So the collection of $\widetilde{\varphi}$ is exactly $\mathscr{C}(U, T)$. 
\end{proof}

\subsection{Trivialization of the reduced spaces over a Weyl chamber}\label{sec_triv}

Now we focus on $G=SU(3)$. Let \index{$w_0$, $w_1$, $w_2$}
$$w_0=\mathrm{diag}(1,1,-2), w_1=\mathrm{diag}(1,0,-1), w_2=\mathrm{diag}(2,-1,-1).$$ Let $W$ be the Weyl chamber in $\mathfrak{t}^*\cong i\mathfrak{t}$ bounded by the rays $\mathbb{R}_{\geq 0}\cdot w_0$ and $\mathbb{R}_{\geq 0}\cdot w_2$. Also, let \index{$W$, $W_{ij}$}$W_{ij}$ denote the subcone of $W$ bounded by $\mathbb{R}_{\geq 0}\cdot w_i$ and $\mathbb{R}_{\geq 0}\cdot w_j$ for $(i,j)=(0,1)$ and $(1,2)$. For any $p\in i\mathfrak{t}$, we will denote the reduced space by $M_p$, and we will use $\varphi_p$ to denote the induced map on $M_p$ by any $\varphi\in \mathrm{Sympl}_\mathcal{Z}^G(T^*\mathcal{B})$.

The action by $T$ is not quasi-free, since the center in $SU(3)$ fixes every point. This can be resolved by replacing $G=SU(3)$ by $G_{\mathrm{Ad}}=PSU(3)=SU(3)/\mu_3$, where \index{$\mu_3$}{$\mu_3$} is the center. 

\subsubsection{Trivialization around the ray $\mathbb{R}_{\geq 0}\cdot w_0$}
Fix a $p\in \mathbb{R}_{>0}\cdot w_0$. Then the Lie algebra of $G_p$ is $\mathfrak{g}_p=\{x\in\mathfrak{g}: [x,p]=0\}\cong\mathfrak{u}(2)$ (we fix such an identification once for all).

\begin{lemma}\label{w_0}
For $\epsilon>0$ small, $\mu^{-1}(p+B_\epsilon(0, i\mathfrak{g}_p))$ is $U(2)$-equivariantly symplectomorphic to a neighborhood of the zero section of  $T^*(U(2)/\mu_3)$, where $\mu_3=\{\begin{bmatrix}e^{i\frac{2k\pi}{3}}& \\ &e^{i\frac{2k\pi}{3}}\end{bmatrix}: 0\leq k\leq 2\}$. 
\end{lemma}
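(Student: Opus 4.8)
The plan is to identify $\mu^{-1}(p + B_\epsilon(0, i\mathfrak{g}_p))$ with a model cotangent bundle by combining a symplectic cross-section argument with the normal-form theory already available in the excerpt. First I would observe that since $p \in \mathbb{R}_{>0}\cdot w_0$ is a point whose stabilizer $G_p \cong U(2)$ has rank equal to the rank of $G$, the preimage $Y := \mu^{-1}(p + B_\epsilon(0, i\mathfrak{g}_p))$ is, for $\epsilon$ small, a $G_p$-invariant symplectic submanifold of $T^*\mathcal{B}$ (a symplectic cross-section / slice for the $G$-action in the sense of Guillemin--Sternberg), and the restricted moment map $\mu_{G_p}: Y \to i\mathfrak{g}_p$ is a moment map for the residual Hamiltonian $U(2)$-action on $Y$, with $\mu_{G_p}^{-1}(p)$ identified with the original reduced space $M_p$ sitting over the regular value $p \in i\mathfrak{t}$. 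Since $p$ is a regular value of $\mu$ restricted to the Weyl chamber (it lies in the interior of $W$ away from the singular values described in Lemma \ref{sing_val}), $M_p$ is a smooth symplectic manifold.

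The key point is then to identify the reduced space. Since $p$ lies on the ray $\mathbb{R}_{\geq 0}\cdot w_0$ with $w_0 = \mathrm{diag}(1,1,-2)$, the coadjoint orbit $\mathcal{O}(p)$ is $SU(3)/S(U(2)\times U(1)) \cong \mathbb{CP}^2$, and by the Proposition in the excerpt, $M_p$ is the reduced space at $0$ of the residual $T$-action on $\mathbb{CP}^2$. A direct computation (or: the fact that near this ray the only collapsing circle in $\mathbb{CP}^2$ is the one generated by the $\mathfrak{u}(1)$-factor) shows $M_p \cong U(2)/\mu_3$ as the zero section, where $\mu_3$ is exactly the center of $SU(3)$ intersected with $U(2)$, reflecting the need to pass to $G_{\mathrm{Ad}} = PSU(3)$ to make the $T$-action quasi-free. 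More precisely, after quotienting by the center, the $U(2)$-action (really a $PU(2)\times(\text{central }S^1)$-type action) on $Y$ is quasi-free near $\mu_{G_p}^{-1}(p)$, so $\mu_{G_p}^{-1}(p) \to M_p$ is a principal $T$-bundle and $M_p$ inherits the structure making $Y$ look like a cotangent bundle.

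The final step is to apply the Duistermaat--Heckman normal form recalled in Section \ref{D-H}: taking $X = Y$, the torus to be the maximal torus $T \subset U(2)$ (or rather the quotient making the action quasi-free), and the regular value $p$, we get that a neighborhood of $\mu_{G_p}^{-1}(p)$ in $Y$ is $T$-equivariantly symplectomorphic to the local model $P \times U$ with $P = \mu_{G_p}^{-1}(p)$ a principal $T$-bundle over $M_p = U(2)/\mu_3$. Recognizing that this local model, together with the residual $U(2)$-equivariance, is precisely a Weinstein tubular neighborhood of the zero section in $T^*(U(2)/\mu_3)$ — using Weinstein's Lagrangian neighborhood theorem to match the $U(2)$-invariant symplectic forms, since both are exact and agree on the zero section — completes the identification. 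The main obstacle I expect is bookkeeping the two compatible group actions at once: verifying that the full $U(2)$-equivariance (not just the $T$-equivariance handed to us by Duistermaat--Heckman) survives the normal-form construction, which requires checking that the cross-section $Y$ carries a genuine Hamiltonian $U(2)$-action with the stated quasi-freeness after centralizing, and that Weinstein's theorem can be run $U(2)$-equivariantly. The subtlety about the center $\mu_3$ — i.e. why the zero section is $U(2)/\mu_3$ rather than $U(2)/S^1$ or $U(2)$ itself — must be tracked carefully through the identification $\mathcal{B} \cong G/T$ and the weight-versus-root-lattice issue flagged just before the lemma.
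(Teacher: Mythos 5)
Your opening step is correct and matches the paper: $Y=\mu^{-1}(p+B_\epsilon(0,i\mathfrak{g}_p))$ is a $G_p\cong U(2)$-invariant symplectic cross-section, by the argument that $T^*\mathcal{B}\to i\mathfrak{g}/i\mathfrak{g}_p$ is a submersion near $Y$ because $[\,i\mathfrak{g}_p^\perp,p\,]=\mathfrak{g}_p^\perp$, with the Hamiltonian vector fields $L_\eta$, $\eta\in i\mathfrak{g}_p^\perp$, spanning the symplectic complement. After that, however, your argument goes off the rails, and the detour through Duistermaat--Heckman is both unnecessary and incorrect.

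The key structural observation that drives the paper's proof, and which you never cleanly state, is that $\mu^{-1}(p)\cong U(2)/\mu_3$ is a \emph{Lagrangian} submanifold of the cross-section $Y$. This is almost immediate: $\dim_\mathbb{R}Y=12-4=8$ and $\dim_\mathbb{R}\mu^{-1}(p)=4$, so it is half-dimensional; and since $p$ is central in $\mathfrak{g}_p$ and $\mu^{-1}(p)$ is a \emph{single} $G_p$-orbit, the null distribution of $\omega|_{\mu^{-1}(p)}$ (which contains the $G_p$-orbit directions) is everything, so $\mu^{-1}(p)$ is isotropic. Once this is noted, the $U(2)$-equivariant Weinstein Lagrangian tubular neighborhood theorem finishes the proof in one stroke. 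No normal-form machinery for the $T$-action is needed.

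Your Duistermaat--Heckman step is where the actual error lies. Applying the normal form from Section~\ref{D-H} to the $T$-action on $Y$ at the regular value $p$ would give a local model $P\times U$ with $P=\Phi_T^{-1}(p)$, the level set of the \emph{torus} moment map $\Phi_T:Y\to i\mathfrak{t}$, and $U\subset i\mathfrak{t}$. But $\dim P=\dim Y-\dim\mathfrak{t}=8-2=6$, while $\mu_{G_p}^{-1}(p)=\mu^{-1}(p)$ has dimension $4$. Thus your claim that $P=\mu_{G_p}^{-1}(p)$ is false: the $T$-level set is strictly larger than the $G_p$-level set (it is a level set of only two of the four components of $\mu_{G_p}$). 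The local model $P\times U$ has dimension $6+2=8=\dim Y$, consistent with Duistermaat--Heckman, but it is not a neighborhood of the zero section of $T^*(U(2)/\mu_3)$ in any obvious way; in particular the $T$-bundle $P\to P/T$ has a $4$-dimensional base, not $U(2)/\mu_3$ itself. Relatedly, your identifications of ``$M_p$'' are internally inconsistent: you write both $M_p\cong U(2)/\mu_3$ (a $4$-manifold) and $\mu_{G_p}^{-1}(p)\to M_p$ is a principal $T$-bundle (which, with a $4$-dimensional total space, would force $M_p$ to be $2$-dimensional); and in the paper's notation $M_p:=\mu^{-1}(p)/G_p$ is simply a point. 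You do eventually invoke Weinstein's theorem, which is the correct endgame, but the problems above are not mere bookkeeping: the Lagrangian structure of $\mu^{-1}(p)$ inside $Y$ is exactly what licenses Weinstein's theorem, and that fact is the one thing your write-up never isolates.
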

\begin{proof}
 First, we show that $\mu^{-1}(p+B_\epsilon(0, i\mathfrak{g}_p))$ is a symplectic submanifold with symplectic complement at each point $(x,\xi)$ consisting of the Hamiltonian vector fields $L_\eta, \eta\in i\mathfrak{g}_p^\perp$. The proof is very similar to the first part of the proof of Lemma \ref{sing_val}.
 
The map $T^*\mathcal{B}\rightarrow i\mathfrak{g}\rightarrow i\mathfrak{g}/i\mathfrak{g}_p\cong\mathfrak{g}_p^\perp$ is a submersion restricted to $\mu^{-1}(p+B_\epsilon(0, i\mathfrak{g}_p))$, for $\epsilon>0$ small enough. This is because $d\mu_{(x,\xi)} (L_{\eta})=[\eta, \mu(x,\xi)]$ for any $\eta\in\mathfrak{g}$, and  $[i\mathfrak{g}_p^\perp, p]=\mathfrak{g}_p^\perp$. Therefore, $\mu^{-1}(p+B_\epsilon(0, i\mathfrak{g}_p))$ is a smooth submanifold and $\{L_\eta(x,\xi): \eta\in i\mathfrak{g}_p^\perp\}$ is a complement to its tangent space at any point $(x,\xi)$. The tangent space of $\mu^{-1}(p+B_\epsilon(0, i\mathfrak{g}_p))$ at any point $(x,\xi)$ is spanned by $L_{\eta}, \eta\in\mathfrak{g}_p$ and the vertical vectors $L_{x^{-1}}^*\zeta, \zeta\in \mathrm{Ad}_{x^{-1}}i\mathfrak{g}_p$, 
 so clearly $\omega$ is nondegenerate on $\mu^{-1}(p+B_\epsilon(0, i\mathfrak{g}_p))$. Also, $i_{L_\eta}\omega=dH_{\eta}, \eta\in i\mathfrak{g}_p^\perp$ vanishes on $\mu^{-1}(p+B_\epsilon(0, i\mathfrak{g}_p))$,
so the space of Hamiltonian vector fields $L_\eta, \eta\in i\mathfrak{g}_p^\perp$ at each point is its symplectic complement.

Next, since $\mu^{-1}(p)\cong U(2)/\mu_3$ is $\omega$-isotropic, by the equivariant version of Weinstein's Lagrangian tubular neighborhood theorem, we get the desired result. 
\end{proof}

Let $w_\epsilon=w_0+\epsilon\cdot \mathrm{diag}(1,-1,0)$, and \index{$W_{\pm\epsilon}$}$W_{\pm\epsilon}\subset i\mathfrak{t}$ be the cone bounded by $\mathbb{R}_{\geq 0}\cdot w_{\epsilon}$ and $\mathbb{R}_{\geq 0}\cdot w_{-\epsilon}$ for $\epsilon>0$ small. Identifying $\mathrm{Ad}_{G_p}(\overset{\circ}{W}_{\pm\epsilon})$ with a cone in $i\mathfrak{u}(2)$ via the map $\mathfrak{g}_p\cong \mathfrak{u}(2)$, we have 
\begin{cor}\label{triv_bdry}
$\mu^{-1}(\mathrm{Ad}_{G_p}(\overset{\circ}{W}_{\pm\epsilon}))\cong U(2)/\mu_3\times \mathrm{Ad}_{G_p}(\overset{\circ}{W}_{\pm\epsilon})$ as Hamiltonian $U(2)$-spaces, where
the latter space is equipped with the symplectic form induced from $T^*(U(2)/\mu_3)\cong U(2)/\mu_3\times i\mathfrak{u}(2)$.
\end{cor}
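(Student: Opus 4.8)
The plan is to deduce the statement directly from Lemma \ref{w_0} by restricting the $U(2)$-equivariant symplectomorphism obtained there to the preimage of the subcone $\mathrm{Ad}_{G_p}(\overset{\circ}{W}_{\pm\epsilon})$, and then upgrading the (a priori local, zero-section-supported) trivialization to a genuine product of Hamiltonian spaces by observing that over $\overset{\circ}{W}_{\pm\epsilon}$ the $T$-action is quasi-free and $0$ is a regular value of the reduced moment map. Concretely, first I would note that $\mathrm{Ad}_{G_p}(\overset{\circ}{W}_{\pm\epsilon})$ is precisely the image in $i\mathfrak{u}(2)\cong i\mathfrak{g}_p$ of a small open cone of regular values: by Lemma \ref{sing_val}, the singular values of $\mu$ inside $W$ lie along the wall $\mathbb{R}_{\geq 0}\cdot w_1$ (where an eigenvalue vanishes, giving a proper subset of eigenvalues summing to zero), so for $\epsilon$ small the cone $\overset{\circ}{W}_{\pm\epsilon}$ meets only regular values, and along the ray $\mathbb{R}_{>0}\cdot w_0$ the stabilizer is exactly $U(2)$ with no extra torus isotropy once the center $\mu_3$ is quotiented out.

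Next I would invoke the Duistermaat–Heckman normal form recalled in Section \ref{D-H}: near the regular value $p$, after passing to $G_{\mathrm{Ad}}$ so the residual $T$-action is quasi-free, $\mu^{-1}$ of a neighborhood of $p$ is $U(2)$-equivariantly symplectomorphic to $P\times U$ with $P=\mu^{-1}(p)\cong U(2)/\mu_3$ a principal $T$-bundle over the reduced space $M_p$, which is a point since $\mathcal{O}(p)/T$ is a point (two eigenvalues coincide, so the coadjoint orbit is $\mathbb{P}^2$, but the relevant reduction at $0$ for the residual torus collapses to a point — more directly, $\dim\mu^{-1}(p) = \dim U(2)/\mu_3$ equals $\dim U(2) = \dim T^*\mathcal{B} - 2\dim(\text{generic }\mathcal{O}) / \ldots$, so $M_p$ is $0$-dimensional). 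With $M_p$ a point, $P$ is just the torus bundle $U(2)/\mu_3$ itself and the local model $P\times U$ becomes $U(2)/\mu_3\times U$ with $U\subset i\mathfrak{u}(2)$ a neighborhood of $0$; choosing $U$ adapted to the cone so that it contains $\mathrm{Ad}_{G_p}(\overset{\circ}{W}_{\pm\epsilon})$ (shrinking $\epsilon$ if necessary), and using the identification $T^*(U(2)/\mu_3)\cong U(2)/\mu_3\times i\mathfrak{u}(2)$ by left-translation, gives exactly the asserted product description $\mu^{-1}(\mathrm{Ad}_{G_p}(\overset{\circ}{W}_{\pm\epsilon}))\cong U(2)/\mu_3\times \mathrm{Ad}_{G_p}(\overset{\circ}{W}_{\pm\epsilon})$ as Hamiltonian $U(2)$-spaces, the moment map being the projection to the second factor.

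The main obstacle I anticipate is bookkeeping rather than conceptual: matching the two identifications of $i\mathfrak{g}_p$ with $i\mathfrak{u}(2)$ (the one fixed in Lemma \ref{w_0} and the one implicit in writing $\mathrm{Ad}_{G_p}(\overset{\circ}{W}_{\pm\epsilon})$ as a cone in $i\mathfrak{u}(2)$) so that the symplectic form on the product really is the one induced from $T^*(U(2)/\mu_3)$, and checking that the neighborhood $U$ from Duistermaat–Heckman can be taken to be $\mathrm{Ad}_{G_p}$-invariant and to engulf the open subcone — this is where one uses that $\overset{\circ}{W}_{\pm\epsilon}$ is a cone, so after rescaling by the $\mathbb{R}_+$-action (under which everything is equivariant) it suffices to control a neighborhood of a single compact transversal to the rays. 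A secondary point to verify carefully is that the $U(2)$-equivariant symplectomorphism of Lemma \ref{w_0}, which is only asserted on a neighborhood of the zero section, in fact covers all of $\mathrm{Ad}_{G_p}(\overset{\circ}{W}_{\pm\epsilon})$ after the rescaling argument; this follows because the whole picture is $\mathbb{R}_+$-equivariant and the zero section corresponds to $p$ itself, so a neighborhood of the zero section pulls back to a full conical neighborhood of the ray $\mathbb{R}_{>0}\cdot w_0$.
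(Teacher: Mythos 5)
Your opening sentence identifies the right skeleton, and it matches the paper's proof: start from Lemma \ref{w_0} and propagate by the $\mathbb{R}_+$-dilation. But two of the steps you propose do not hold up, and one of them omits the single observation the paper's argument actually hinges on.

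The detour through Duistermaat--Heckman is wrong as stated. You write that $P=\mu^{-1}(p)\cong U(2)/\mu_3$ is "a principal $T$-bundle over the reduced space $M_p$, which is a point." That is dimensionally impossible: $U(2)/\mu_3$ is $4$-dimensional, a principal $T$-bundle over a point is $2$-dimensional. The reduction that is a point is $\mu^{-1}(p)/G_p$ with $G_p\cong U(2)$, not a $T$-reduction; the $T$-reduction of $\mu^{-1}(p)$ is a $2$-sphere. More to the point, the Duistermaat--Heckman normal form recalled in Section \ref{D-H} is for torus actions near a regular value and produces a $T$-equivariant identification, whereas the corollary requires a $U(2)$-equivariant one. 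That is precisely what Lemma \ref{w_0} delivers, and it does so by a different mechanism: it first exhibits $\mu^{-1}(p+B_\epsilon(0,i\mathfrak{g}_p))$ as an $8$-dimensional symplectic submanifold in which $\mu^{-1}(p)$ is Lagrangian, and then applies the equivariant Weinstein Lagrangian tubular neighborhood theorem. Your proposal should simply drop the DH thread and lean entirely on Lemma \ref{w_0}.

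The extension to the full cone also needs more than the assertion that "the whole picture is $\mathbb{R}_+$-equivariant." The Weinstein symplectomorphism in Lemma \ref{w_0} is produced by a Moser argument and carries no built-in $\mathbb{R}_+$-equivariance, and the $\mathbb{R}_+$-action does not preserve the symplectic form---it scales it. The paper's proof instead records two features of the model $U(2)/\mu_3\times i\mathfrak{u}(2)$: the cotangent form is \emph{invariant} under translation by central elements $v\in\mathbb{R}\cdot\mathrm{diag}(1,1)$, and it is \emph{scaled} by the $\mathbb{R}_+$-dilation of the $i\mathfrak{su}(2)$-component, in a way compatible with the scaling of the form on $T^*\mathcal{B}$. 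The translation invariance is indispensable: Lemma \ref{w_0} sends $\mu^{-1}(p)$ to the zero section, i.e. to moment value $0$ rather than to the image of $p$ in $i\mathfrak{u}(2)$ (which lies on the central axis, since $w_0=\mathrm{diag}(1,1,-2)$ maps to $\mathrm{diag}(1,1)$ under $i\mathfrak{g}_p\cong i\mathfrak{u}(2)$). One must recenter by that translation before the moment maps can agree, and only then is the result an isomorphism of Hamiltonian $U(2)$-spaces as the corollary asserts. Your last paragraph gestures at a "bookkeeping" issue here but does not supply the observation that makes it go through.
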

\begin{proof}
The symplectic form on $U(2)/\mu_3\times i\mathfrak{u}(2)$ is invariant under the translation map $(\cdot, \cdot+v)$ for any $v\in\mathbb{R}\cdot \mathrm{diag}(1,1)$, and it is getting scaled under the $\mathbb{R}_+$-action on $i\mathfrak{u}(2)$. Note that such change of the symplectic form is compatible with the $\mathbb{R}_+$-action on $\mu^{-1}(\mathrm{Ad}_{G_p}(\overset{\circ}{W}_{\pm\epsilon}))$, so combining with Lemma \ref{w_0}, we complete the proof.
\end{proof}

\subsubsection{Trivialization along $\mathring{W}$ and $\mathring{W}_{ij}$, $(i,j)=(0,1)$ and $(1,2)$}
The following lemma has already been obtained within the proof of Lemma \ref{sing_val}. 
\begin{lemma}\label{W_sympl}
$\mu^{-1}(\overset{\circ}{W})$ is a symplectic submanifold with symplectic complement consisting of the tangent vectors to the $\exp(B_\epsilon(0,-i\mathfrak{t}^\perp))$-orbits. In particular, the same holds for $\mu^{-1}(\overset{\circ}{W_{ij}})$,  for $(i,j)=(0,1)$ and $(1,2)$.
\end{lemma}

For any $p\in\overset{\circ}{W}_{01}$, $\mu^{-1}(p)\cong U(2)/\mu_3$ as a principal $T$-bundle over $\mathbb{P}^1(\cong T\backslash U(2)$, the quotient of $U(2)$ by the left action of $T$). Let $A\in\Omega^1(U(2)/\mu_3, i\mathfrak{t})$ be the unique right $U(2)$-invariant connection form on $U(2)/\mu_3$ determined by the Killing form, i.e. one takes the Maurer-Cartan form and projects it to $i\mathfrak{t}$. Applying Duistermaat-Heckman theorem (see \cite{GuSt89}), we get the following.
\begin{prop}\label{W_{01}}
$\mu^{-1}(\overset{\circ}{W}_{01})$ is $T$-equivariantly symplectomorphic to $U(2)/\mu_3\times \overset{\circ}{W}_{01}$ with symplectic form $c\cdot d(A\cdot \tau)$, where $\tau\in\mathfrak{t}^*$ and $c$ is some positive constant. The symplectomorphism can be chosen to respect the $\mathbb{R}_+$-action.
\end{prop}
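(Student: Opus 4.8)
\emph{Proof proposal.} The plan is to realize the statement as a concrete instance of the Duistermaat--Heckman normal form (\cite{GuSt89}), with the particular shape $c\cdot d(A\cdot\tau)$ of the symplectic form \emph{forced} by the $\mathbb{R}_+$-homogeneity of $\mu$ and $\omega$. First I would set $X=\mu^{-1}(\overset{\circ}{W}_{01})$. By Lemma \ref{W_sympl}, $X$ is a symplectic submanifold of $T^*\mathcal{B}$ and $\mu|_X\colon X\to\overset{\circ}{W}_{01}$ is a moment map for the Hamiltonian $T$-action, which is proper since each fibre $\mu^{-1}(p)\cong U(2)/\mu_3$ is compact; after passing to $PSU(3)=SU(3)/\mu_3$ this action is quasi-free. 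Moreover, by Lemma \ref{sing_val} no element of the \emph{open} subcone $\overset{\circ}{W}_{01}$ (writing $p=sw_0+tw_1$, $s,t>0$, one checks directly that no proper subset of the eigenvalues sums to zero) is a singular value, so every $p\in\overset{\circ}{W}_{01}$ is a regular value with stabilizer exactly $T$, and $\mu^{-1}(p)$ is the principal $T$-bundle $U(2)/\mu_3\to M_p\cong\mathbb{P}^1$. Fix $p_0\in\overset{\circ}{W}_{01}$, put $P:=\mu^{-1}(p_0)$, and let $A$ be the distinguished right-$U(2)$-invariant connection introduced before the statement.

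\emph{Step 1: a global Duistermaat--Heckman model over the cone.} Using the $\mathbb{R}_+$-scaling $r_\lambda$ on $T^*\mathcal{B}$ I would $\mathbb{R}_+$-equivariantly identify $\overset{\circ}{W}_{01}\cong\mathbb{R}_+\times I$, with $I$ an open arc of regular values, and correspondingly $X\cong\mathbb{R}_+\times\mu^{-1}(I)$. Over the one-dimensional base $I$ the local normal form recalled above patches: cover $I$ by subintervals, apply the model on each, and glue via the uniqueness clause, which is available because $H^1(M_p;\mathbb{R})=H^1(\mathbb{P}^1;\mathbb{R})=0$ --- this is precisely the mechanism of Lemma \ref{torsor} and Corollary \ref{equiv_sympl}. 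Propagating by the $\mathbb{R}_+$-flow then yields a $T$- and $\mathbb{R}_+$-equivariant bundle isomorphism $\Psi\colon X\xrightarrow{\ \sim\ }P\times\overset{\circ}{W}_{01}$, a bundle map over $\overset{\circ}{W}_{01}$ with $\Psi|_{P\times\{p_0\}}=\mathrm{id}$, such that
\begin{equation*}
\Psi_*(\omega|_X)=\pi^*\omega_{p_0}^{\mathrm{red}}+d\big((\tau-p_0)\cdot A\big),
\end{equation*}
where $\pi\colon P\times\overset{\circ}{W}_{01}\to\mathbb{P}^1$ and $\omega_{p_0}^{\mathrm{red}}$ is the reduced form on $M_{p_0}$.

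\emph{Step 2: pinning down the reduced class by homogeneity.} The Duistermaat--Heckman variation formula gives, in $H^2(M_p;\mathbb{R})\cong\mathbb{R}$, an affine-linear dependence $[\omega_p^{\mathrm{red}}]=[\omega_{p_0}^{\mathrm{red}}]+\langle p-p_0,\kappa\rangle$, where $\kappa=[F_A]$ is the (connection-independent) Duistermaat--Heckman class. On the other hand $r_\lambda^*\omega=\lambda\omega$ and $\mu\circ r_\lambda=\lambda\mu$, so $r_\lambda$ descends to symplectomorphisms $M_p\to M_{\lambda p}$ carrying $\omega_{\lambda p}^{\mathrm{red}}$ to $\lambda\,\omega_p^{\mathrm{red}}$; hence $[\omega_{\lambda p}^{\mathrm{red}}]=\lambda[\omega_p^{\mathrm{red}}]$, i.e.\ the affine-linear function above is homogeneous of degree one. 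Comparing, its constant term $\nu:=[\omega_{p_0}^{\mathrm{red}}]-\langle p_0,\kappa\rangle$ satisfies $\nu=\lambda\nu$ for all $\lambda>0$, so $\nu=0$ and $[\omega_p^{\mathrm{red}}]=\langle p,\kappa\rangle$ for every $p\in\overset{\circ}{W}_{01}$; in particular $[\omega_p^{\mathrm{red}}]$ equals $c\,\langle p,[F_A]\rangle$ for a single positive normalization constant $c$ independent of $p$, and, since $\omega_p^{\mathrm{red}}$ is symplectic, $d(A\cdot\tau)$ is genuinely nondegenerate on all of $P\times\overset{\circ}{W}_{01}$.

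\emph{Step 3: Moser to the final form, and the main obstacle.} The form $\Omega:=c\cdot d(A\cdot\tau)$ is $T$-invariant and $\mathbb{R}_+$-equivariant, its restriction to each fibre $P\times\{p\}$ descends to the class $c\langle p,[F_A]\rangle$, and a direct computation shows its $T$-moment map is $\tau\mapsto c\tau$. By Step 2, $\Psi_*(\omega|_X)$ and a suitable pullback of $\Omega$ along the base rescaling $p\mapsto p/c$ are $T$-basically cohomologous with equal reduced classes on every fibre, so their difference is the differential of a $T$-basic $1$-form that is exact along each fibre; running the fibrewise ($\tau$-parametrized), $T$-equivariant Moser argument exactly as in Corollary \ref{equiv_sympl} --- and choosing the Moser vector field $\mathbb{R}_+$-equivariant using the homogeneity of both forms --- produces a $T$- and $\mathbb{R}_+$-equivariant symplectomorphism $\Phi$ from $(P\times\overset{\circ}{W}_{01},\Psi_*(\omega|_X))$ to $(P\times\overset{\circ}{W}_{01},\Omega)$; then $\Phi\circ\Psi$ is the asserted symplectomorphism. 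The hard part is keeping everything simultaneously $T$-equivariant \emph{and} $\mathbb{R}_+$-equivariant while working with the intrinsically local Duistermaat--Heckman charts: the patching in Step 1 and the Moser correction in Step 3 do not respect the scaling flow on the nose. I expect to handle this by carrying out the whole construction first over the transversal arc $I$ (where there is no $\mathbb{R}_+$-symmetry to preserve) and then extending radially by the $\mathbb{R}_+$-flow, which is consistent precisely because the DH model transforms correctly under scaling; the uniformity of the constant $c$ over the whole cone, which is what makes a single global model possible, is exactly what the homogeneity computation of Step 2 buys us.
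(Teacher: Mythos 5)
Your proposal is correct and follows the same global strategy as the paper---Duistermaat--Heckman normal form plus an equivariant Moser argument---but with two variations worth noting. First, for identifying the cohomology class of the reduced symplectic form with $c\langle\tau,[dA]\rangle$, the paper combines the DH affine-linearity with the observation that the reduced class degenerates (vanishes) as $\tau$ approaches the wall $\mathbb{R}_{\geq 0}\cdot w_0$, whereas you derive the absence of a constant term from $\mathbb{R}_+$-homogeneity of $\omega$ and $\mu$ (so $[\omega^{\mathrm{red}}_{\lambda p}]=\lambda[\omega^{\mathrm{red}}_{p}]$). These are consistent: your argument shows the affine-linear function is genuinely linear (passes through the origin), the paper's shows its kernel contains $w_0$; together they pin down proportionality to $[dA]$, and each on its own is enough once one knows the class is a nonzero linear functional on the two-dimensional $\mathfrak{t}^*$. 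Your homogeneity argument is arguably cleaner since it sidesteps any discussion of wall behavior, and it dovetails naturally with the $\mathbb{R}_+$-equivariance the proposition asserts. Second, you first build a global DH trivialization $\Psi$ by patching local models via Lemma \ref{torsor} (using $H^1(\mathbb{P}^1)=0$) and then run Moser against the target form $\Omega$, whereas the paper takes the shortcut of fixing \emph{any} $T$-equivariant diffeomorphism $\phi$ and directly applies equivariant Moser to the linear path $(1-t)\omega+t\phi^*d(A\cdot\tau)$, noting that nondegeneracy along the path holds because every reduced space is a $\mathbb{P}^1$ (so a convex combination of two cohomologous area forms of the same sign is again an area form). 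Your Step 1 is more work than strictly necessary, but it is not wrong, and it makes the $\mathbb{R}_+$-equivariance bookkeeping in Step 3 more transparent---a point the paper's proof passes over quickly.
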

\begin{proof}
The only thing to be careful is that we have a global identification over $\overset{\circ}{W}_{01}$ rather than a local identification near some point. First, on each reduced space, the cohomology class of $cdA\cdot \tau$ agrees with that of the induced symplectic form, for some fixed $c>0$. This is because the latter depends linearly on $\tau$ and the class vanishes on $\mathbb{R}_{\geq 0}\cdot w_0$.

Fix a $T$-equivariant isomorphism $\phi: \mu^{-1}(\overset{\circ}{W}_{01})\rightarrow U(2)/\mu_3\times \overset{\circ}{W}_{01}$. The fact that the reduced spaces are all $\mathbb{P}^1$ ensures that we can apply the equivariant version of Moser's argument on the family of symplectic forms $(1-t)\omega|_{\mu^{-1}(\overset{\circ}{W}_{01})}+t \phi^* d(A\cdot\tau), t\in [0,1]$, and get the statement.
\end{proof}

\subsubsection{Real blow-ups and some treatment near the singular loci}\label{blup}
$\empty$\\

\noindent\ref{blup}.1. \emph{Real blowing up operations and local charts near the singular loci of $\mu$.}
The material below on real blow-ups is following \cite{GuSt89}, section 10. Suppose we have a Hamiltonian $S^1$-action on $\mathbb{C}\times\mathbb{C}^n$ (equipped with the product of the standard K$\ddot{\text{a}}$hler forms), given by 
\begin{equation}\label{S^1-action}
e^{i\theta}\cdot (z_0, z)=(e^{i\theta}z_0, e^{-i\theta}z).
\end{equation}  Then the moment map is $\Phi(z_0, z)=-|z_0|^2+|z|^2$. The real blowing up is a local surgery to $\mathbb{C}\times\mathbb{C}^n$, so that $\Phi^{-1}(-\infty, 0)$ is unchanged and the new moment map is regular over $(-\infty, \delta)$ for some $\delta>0$. The construction is as follows. 

Let $(t,s)$ be the standard coordinate on $T^*S^1\cong S^1\times\mathbb{R}$.  Choose $\epsilon,\delta>0$ very small, remove the set $\{|z_0|^2<\frac{\epsilon}{2}, -|z_0|^2+|z|^2<\delta\}$ in $\mathbb{C}\times\mathbb{C}^n$ and glue with the set $\{s<\epsilon, -s+|z|^2<\delta\}\subset T^*S^1\times\mathbb{C}^n$ using the identification $\{\frac{\epsilon}{2}\leq |z_0|^2<\epsilon, -|z_0|^2+|z|^2<\delta\}\cong \{\frac{\epsilon}{2}\leq s<\epsilon, -s+|z|^2<\delta\}$. We will denote the resulting manifold by $Bl_{\epsilon, \delta}(\mathbb{C}\times\mathbb{C}^n)$. Since the real blowing up can be done within an arbitrarily small ball around the origin for $\epsilon,\delta$ sufficiently small, we can globalize this procedure to any quasi-free Hamiltonian $S^1$-action on a symplectic manifold $M$ with the moment map having Morse-Bott singularities of index $(2, 2k)$.

Now let \index{$T^{\check{w}}$} $T^{\check{w}}$ denote the subgroup $\exp(\mathbb{R}\cdot \check{w})$ for any $\check{w}\in \mathfrak{t}$. As mentioned before, we have to replace $G$ by $G_{\mathrm{Ad}}$ to ensure the action by $T$ to be quasi-free. Let
 \index{$u_1$, $\check{u}_1$, $\underline{w}_1$, $\check{\underline{w}}_1$}$$u_1=\frac{1}{2}\mathrm{diag}(1,-2,1), \check{u}_1=\frac{1}{3}\mathrm{diag}(i,-2i, i),\underline{w}_1=\frac{1}{2}w_1, \check{\underline{w}}_1=iw_1.$$
 For $\nu>0$ small, let \index{$C_\nu$}$C_\nu$ be the cone bounded by $\mathbb{R}_{\geq 0}(\underline{w}_1\pm \nu\cdot u_1)$. It is clear that  $\mu^{-1}(\overset{\circ}{C}_\nu)$ can be trivialized as $\mu^{-1}(\underline{w}_1+\mathbb{R}_{(-\nu,\nu)}\cdot u_1)\times \mathbb{R}_+$ equipped with the symplectic form $d(t\alpha)$, where $\alpha$ is equal to the primitive $-\mathbf{p}d\mathbf{q}$ of $\omega$ to $\mu^{-1}(\underline{w}_1+\mathbb{R}_{(-\nu,\nu)}\cdot u_1)$ and $t$ is the coordinate of $\mathbb{R}_+$. Along $\mu^{-1}(\overset{\circ}{C_\nu})$, $T^{\check{\underline{w}}_1}$ acts freely, so the moment map 
$$\mu_{\underline{w}_1,\nu}: \mu^{-1}(\overset{\circ}{C_\nu})\overset{\mu}{\rightarrow}\overset{\circ}{C_\nu}\rightarrow \overset{\circ}{C_\nu}/\langle u_1\rangle\cong\mathbb{R}_+\cdot \underline{w}_1$$
for the $T^{\check{\underline{w}}_1}$-action is regular, and the reduced space at any $p\in  \mathbb{R}_+\cdot \underline{w}_1$ \index{$M_{p,\nu}^{\underline{w}_1}$, $\mu_{p,\nu}^{\underline{w}_1}$}
$$M_{p,\nu}^{\underline{w}_1}:=T^{\check{\underline{w}}_1}\backslash\mu_{\underline{w}_1,\nu}^{-1}(p)$$
is a 4-dimensional symplectic manifold with a Hamiltonian $T^{\check{u}_1}$-action. 
The moment map for the $T^{\check{u}_1}$-action on $M_{p,\nu}^{\underline{w}_1}$ is denoted by 
$$\mu_{p,\nu}^{\underline{w}_1}: M_{p,\nu}^{\underline{w}_1}\rightarrow (\mathbb{R}\cdot \check{u}_1)^*\cong\mathbb{R}\cdot u_1.$$ 

By Lemma \ref{codim1}, $T^{\check{u}_1}$ has exactly three fixed points $Q_j, j=1,2,3$ on $M_{p,\nu}^{\underline{w}_1}$, which are of the form $(x_1, \begin{bmatrix}0&0&0\\ 0&0&1\\0&1&0\end{bmatrix})$, $(x_2, \begin{bmatrix}0&0&1\\ 0&0&0\\1&0&0\end{bmatrix})$ and $(x_3, \begin{bmatrix}0&1&0\\ 1&0&0\\0&0&0\end{bmatrix})$ respectively, when $p=w_1$. 
Since $(\mathbb{C}^2, \omega=\mathrm{Re}(-idv_1\wedge dv_2))$ and $(\mathbb{C}^2, \omega_{\text{st}}=\frac{i}{2}(dz_0\wedge d\overline{z}_0+dz_1\wedge d\overline{z}_1)))$ are related by $z_0=v_1+\overline{v}_2, z_1=-\overline{v}_1+v_2$, by the calculation of Corollary \ref{surj}, we can identify a small neighborhood of each $Q_j$ in $M_{p,\nu}^{\underline{w}_1}$ with $(\mathbb{C}^2, \omega_{\text{st}}=\frac{i}{2}(dz_0\wedge d\overline{z}_0+dz_1\wedge d\overline{z}_1))$ in an $S^1$-equivariant way. The reduced space at 0 for the Hamiltonian action in (\ref{S^1-action}) can be identified with $\mathbb{C}$ (with the standard K$\ddot{\text{a}}$hler form) by taking the slice in $\{|z_0|^2=|z_1|^2\}$ in which $z_0\geq 0$ and $z_1$ is used to be the linear coordinate on $\mathbb{C}$. In particular, under such identifications, we have $\overline{Q_{i-1}Q_{i}}$ and $\overline{Q_iQ_{i+1}}$ in Figure \ref{braid} go to the positive and negative real lines respectively near $Q_i$, where the indices $i$ are taken to be modulo 3.

Now we can desingularize the action by $T^{\check{u}_1}$ along $\mathbb{R}_{>0}\cdot \underline{w}_1$,   and replace $\mu|_{\mu^{-1}(\overset{\circ}{W})}$ by $\widetilde{\mu}$, then $\widetilde{\mu}$ is regular over the interior of the cone $W_{01,\delta}$ bounded by $\mathbb{R}_{\geq 0}\cdot w_0$ and $\mathbb{R}_{\geq 0}\cdot (\underline{w}_1-\delta\cdot u_1)$, for some $\delta>0$. Similarly to Proposition \ref{W_{01}}, we have $\widetilde{\mu}^{-1}(\overset{\circ}{W}_{01,\delta})\cong (U(2)/\mu_3\times \overset{\circ}{W}_{01,\delta}, c\cdot d(A\cdot \tau))$. 

\begin{remark}\label{rem_triv}
Since the blowing down map from $Bl_{\epsilon, \delta}(\mathbb{C}\times\mathbb{C}^n)$ to $\mathbb{C}\times \mathbb{C}^n$ identifies the reduced spaces at 0, this gives a way to identify the reduced spaces over $\mathbb{R}_{>0}\cdot \underline{w}_1$ with the others over $\overset{\circ}{W}_{01}$. 
\end{remark}

\noindent\ref{blup}.2. \emph{The equivariant linear Symplectic group $Sp(4)^{S^1}$.}

Let \index{$Sp(4)^{S^1}$}
$$Sp(4)^{S^1}:=\{P\in Sp(4): P\text{ commutes with the $S^1$-action in (\ref{S^1-action})}\},$$
where $P$ is relative to the standard basis $\partial_{x_0},\partial_{y_0},\partial_{x_1},\partial_{y_1}$ and $z_j=x_j+iy_j$ for $j=0,1$.
\begin{lemma}\label{l.dvarphi}
$$Sp(4)^{S^1}=\{\begin{bmatrix}\lambda_1e^{i\theta_1}&\lambda_2\sigma\circ e^{i\theta_2}\\
\lambda_2\sigma\circ e^{i\theta_3}& \lambda_1e^{i\theta_4}
\end{bmatrix}, \lambda_i\geq 0\text{ for }i=1,2, \lambda_1^2-\lambda_2^2=1, \theta_1+\theta_2=\theta_3+\theta_4 \text{ if }\lambda_2\neq 0\}, $$
where $\sigma$ means taking complex conjugate.
\end{lemma}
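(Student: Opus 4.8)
The plan is to turn the statement into a short computation in complex coordinates, organized around the observation that, after an innocuous change of variables, $Sp(4)^{S^1}$ is nothing but the pseudo-unitary group $U(1,1)$. Write an arbitrary $\mathbb{R}$-linear endomorphism $P$ of $\mathbb{C}^2$ uniquely as
\[
P(z_0,z_1)=\bigl(a_{00}z_0+a_{01}z_1+b_{00}\overline z_0+b_{01}\overline z_1,\; a_{10}z_0+a_{11}z_1+b_{10}\overline z_0+b_{11}\overline z_1\bigr),
\]
with $a_{ij},b_{ij}\in\mathbb{C}$ (using that $z\mapsto z, iz, \overline z, i\overline z$ span $\operatorname{Hom}_{\mathbb{R}}(\mathbb{C},\mathbb{C})$), and split $P=A+B\circ\sigma$ into its $\mathbb{C}$-linear part $A=(a_{ij})$ and its $\mathbb{C}$-antilinear part $B=(b_{ij})$. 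First I would impose commutation with the $S^1$-action (\ref{S^1-action}): substituting $(e^{-i\theta}z_0,e^{i\theta}z_1)$ and matching the coefficients of $z_0,z_1,\overline z_0,\overline z_1$ against $e^{\mp i\theta}$ forces $a_{01}=a_{10}=0$ and $b_{00}=b_{11}=0$, i.e. $A$ is diagonal and $B$ antidiagonal --- exactly the block shape $\left[\begin{smallmatrix}*&*\circ\sigma\\ *\circ\sigma&*\end{smallmatrix}\right]$ claimed. (Conceptually, in the coordinates $(u_0,u_1)=(\overline z_0,z_1)$ the $S^1$-action is the standard scalar $U(1)$-action, whose commutant in $GL_4(\mathbb{R})$ is the group $GL_2(\mathbb{C})$ of $\mathbb{C}$-linear automorphisms, which makes this step transparent.)

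Next I would impose $P^*\omega_{\text{st}}=\omega_{\text{st}}$. Setting $a:=a_{00}$, $d:=a_{11}$, $b:=b_{01}$, $c:=b_{10}$ and computing $dw_0\wedge d\overline w_0+dw_1\wedge d\overline w_1$, one finds that equality with $dz_0\wedge d\overline z_0+dz_1\wedge d\overline z_1$ is equivalent to the three relations
\[
|a|^2-|c|^2=1,\qquad |d|^2-|b|^2=1,\qquad a\overline b=d\overline c .
\]
The quickest conceptual check of this is again via $(u_0,u_1)=(\overline z_0,z_1)$: there $\omega_{\text{st}}=\tfrac{i}{2}(-du_0\wedge d\overline u_0+du_1\wedge d\overline u_1)$ is the imaginary part of a pseudo-Hermitian form of signature $(1,1)$, a $\mathbb{C}$-linear map preserving it automatically preserves the form itself, and the three displayed relations are precisely the defining equations of $U(1,1)$ for $\operatorname{diag}(-1,1)$ applied to the matrix $\left[\begin{smallmatrix}\overline a&\overline b\\ c&d\end{smallmatrix}\right]$. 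So the lemma is, in essence, the identification $Sp(4)^{S^1}\cong U(1,1)$ together with the standard normal form of $U(1,1)$.

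It then remains to read off the normal form. Taking moduli in $a\overline b=d\overline c$ gives $|a|\,|b|=|d|\,|c|$; feeding this into $|a|^2-|c|^2=|d|^2-|b|^2=1$ and doing a brief case split (when $|b|=0$ necessarily $|c|=0$ and $|a|=|d|=1$; otherwise one may divide through) yields $|a|=|d|=:\lambda_1$, $|b|=|c|=:\lambda_2$ with $\lambda_1^2-\lambda_2^2=1$ and $\lambda_1\ge 1$. Writing the four nonzero entries in polar form, the relation $a\overline b=d\overline c$ becomes exactly the phase constraint $\theta_1+\theta_2=\theta_3+\theta_4$ (with the signs built into the notation $\sigma\circ e^{i\theta_j}$), and it is vacuous when $\lambda_2=0$ since the off-diagonal maps then vanish. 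Conversely every matrix of the displayed form visibly commutes with the $S^1$-action and satisfies the three relations, hence is symplectic, which gives the reverse inclusion and completes the proof. I do not anticipate a real obstacle: the computation is routine, and the only places that need care are keeping the $\mathbb{C}$-linear and $\mathbb{C}$-antilinear parts straight when imposing $S^1$-equivariance, and the short modulus argument that the two scales coincide in pairs --- both of which are clarified by the $U(1,1)$ picture.
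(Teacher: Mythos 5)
Your proof is correct and follows the same two-step route as the paper's: impose commutation with the $S^1$-action to get the diagonal/antidiagonal block shape, then impose symplecticity to get the three scalar relations $|a|^2-|c|^2=1$, $|d|^2-|b|^2=1$, $a\bar b=d\bar c$, from which the normal form follows by a short modulus argument. Your extra observation that the change of variables $(u_0,u_1)=(\overline z_0,z_1)$ identifies $Sp(4)^{S^1}$ with $U(1,1)$ is a pleasant conceptual bonus that the paper leaves implicit.
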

\begin{proof}
Let $P=\begin{bmatrix}A&B\\ C&D
\end{bmatrix}$, where $A,B,C,D$ are all $2\times 2$-matrices. Let $R_\theta$ denote the standard rotation matrix on $\mathbb{R}^2$ by angle $\theta$. Then $P$ is $S^1$-equivariant implies that 
$$[\begin{bmatrix}A&B\\ C&D
\end{bmatrix}, \begin{bmatrix}R_{-\theta}&0\\
0&R_{\theta}
\end{bmatrix}]=0, $$
and this is equivalent to that $P$ is of the form
\begin{equation}\label{dvarphi}
\begin{bmatrix}\lambda_1e^{i\theta_1}&\lambda_2\sigma\circ e^{i\theta_2}\\
\lambda_3\sigma\circ e^{i\theta_3}& \lambda_4e^{i\theta_4},
\end{bmatrix}, \lambda_i\geq 0\text{ for }i=1,...,4
\end{equation}
relative to the standard basis $\partial_{z_0}, \partial_{z_1}$.
Now we need $P$ to be symplectic, i.e. it preserves the K$\ddot{\text{a}}$hler form 
$\frac{i}{2}(dz_0\wedge d\bar{z}_0+dz_1\wedge d\bar{z}_1)$. By direct calculations, 
the undetermined quantities in (\ref{dvarphi}) should satisfy
$$\lambda_1=\lambda_4, \lambda_2=\lambda_3, \lambda_1^2-\lambda_2^2=1, \text{ and }\theta_1+\theta_2=\theta_3+\theta_4\text{ if }\lambda_2\neq 0,$$
and this completes the proof.
\end{proof}
\index{$C_0$}Let $C_0$ be the center of $Sp(4)^{S^1}$, i.e. $\{\begin{bmatrix}e^{-i\theta}& \\ &e^{i\theta}
\end{bmatrix}, \theta\in [0,2\pi)\}$. By the above Lemma, 
\begin{equation}\label{Sp/S1}
Sp(4)^{S^1}/C_0\cong \{\begin{bmatrix}\lambda_1&\lambda_2\sigma\circ e^{i\theta_2}\\
\lambda_2\sigma\circ e^{i\theta_3}& \lambda_1e^{i\theta_4}
\end{bmatrix}: \lambda_1^2-\lambda_2^2=1, \lambda_i\geq 0\text{ for }i=1,2; \theta_2=\theta_3+\theta_4 \text{ if }\lambda_2\neq 0\}.
\end{equation}
There is an  $S^1$-action by the left multiplication of the subgroup $\{\begin{bmatrix}1& \\
 &e^{i\theta}
\end{bmatrix}\}$, and the projection 
$$\begin{array}{ccc}
Sp(4)^{S^1}/C_0&\longrightarrow& S^1\\
\begin{bmatrix}\lambda_1&\lambda_2\sigma\circ e^{i\theta_2}\\
\lambda_2\sigma\circ e^{i\theta_3}& \lambda_1e^{i\theta_4}
\end{bmatrix}&\mapsto&\{\begin{bmatrix}1& \\
 &e^{i\theta_4}
\end{bmatrix}\}
\end{array}
 $$
is an $S^1$-equivariant fiber bundle, with each fiber homeomorphic to a disc, so in particular, this map is a homotopy equivalence.

\begin{lemma}\label{angle}
Let $z_+(t)=t$ and $z_-(t)=-t$, $t\geq 0$ be the two opposite rays emitting from the origin in the reduced space $\mathbb{C}$ at 0. Let $P\in Sp(4)^{S^1}$ and $\widetilde{P}$ be the induced map on $\mathbb{C}$. Then \\
(a) there exists a $P$ for any prescribed values of $\arg(\frac{d}{dt}|_{t=0}\widetilde{P}(z_+(t)))$ and $\arg(\frac{d}{dt}|_{t=0}\widetilde{P}(z_-(t)))$,  except for $\arg(\frac{d}{dt}|_{t=0}\widetilde{P}(z_+(t)))=\arg(\frac{d}{dt}|_{t=0}\widetilde{P}(z_-(t)))$.\\
(b)  If $\widetilde{P}\in Sp(2)$, then $P=\begin{bmatrix}1& \\ &e^{i\theta_4}
\end{bmatrix}$ for some $\theta_4$ modulo the center $C$. In particular, if $P$ satisfies 
\begin{equation}\label{fix1}
\frac{d}{dt}|_{t=0}\widetilde{P}(z_+(t))=1, \frac{d}{dt}|_{t=0}\widetilde{P}(z_-(t))=-1,
\end{equation}
then $P\in C$. Therefore, the map
\begin{equation}\label{direction}
\begin{array}{ccc}
Sp(4)^{S^1}/C_0&\longrightarrow&\mathbb{C}\times \mathbb{C}\\
P&\mapsto&(\frac{d}{dt}|_{t=0}\widetilde{P}(z_+(t)),  \frac{d}{dt}|_{t=0}\widetilde{P}(z_-(t)))
\end{array}
\end{equation}
is an injection.\\
(c) the map 
\begin{equation}\label{S^1-dir}
\begin{array}{ccc}Sp(4)^{S^1}/C_0&\longrightarrow &S^1\\
P&\mapsto& \exp(i\arg(\frac{d}{dt}|_{t=0}\widetilde{P}(z_+(t))))
\end{array}
\end{equation}
is a homotopy equivalence of spaces.
\end{lemma}
\begin{proof}
(a) One lifting of the tangent vector at $0$ of the two rays $z=t$ and $z=-t$ to $\mathbb{C}^2$ is $v_+=\begin{bmatrix}1\\1
\end{bmatrix}$ and $v_-=\begin{bmatrix}1\\-1
\end{bmatrix}$, respectively. Take $P$ as in (\ref{Sp/S1}), then 
$$Pv_+=\begin{bmatrix}\lambda_1+\lambda_2e^{-i\theta_2}\\
(\lambda_1+\lambda_2e^{-i\theta_2})e^{i\theta_4}
\end{bmatrix},Pv_-=\begin{bmatrix}\lambda_1-\lambda_2 e^{-i\theta_2}\\
-(\lambda_1-\lambda_2 e^{-i\theta_2})e^{i\theta_4}
\end{bmatrix}.$$
Let $\beta_\pm=\arg(\lambda_1\pm\lambda_2e^{-i\theta_2})$, and $w_{\pm}$ denote $\frac{d}{dt}|_{t=0}\widetilde{P}(z_{\pm}(t))$. Then
$$\arg(w_+)=\theta_4+2\beta_+, \arg(w_-)=\theta_4+2\beta_-+\pi.$$
It is not hard to see that $\beta_+-\beta_-$ ranges in $(-\frac{\pi}{2},\frac{\pi}{2})$, and then we can use $\theta_4$ to adjust $\arg(w_{\pm})$ to the prescribed values.\\
(b) The claim follows by direction calculations. 
(c) It is obvious by looking at the image of the subgroup $\{\begin{bmatrix}1&0\\0&e^{i\theta}\end{bmatrix}, \theta\in[0,2\pi)\}.$
\end{proof}

\noindent\ref{blup}.3. \emph{A deformation retraction of the equivariant symplectomorphism group of $M_{p,\nu}^{\underline{w}_1}$ to a point.}
Let \index{$\mathrm{Sympl}^{T^{\check{u}_1}}(M_{p,\nu}^{\underline{w}_1},\{Q_j\}_{j=1}^3)$} $\mathrm{Sympl}^{T^{\check{u}_1}}(M_{p,\nu}^{\underline{w}_1},\{Q_j\}_{j=1}^3)$ denote the subgroup of $T^{\check{u}_1}$-equivariant symplectomorphisms $\mathrm{Sympl}^{T^{\check{u}_1}}(M_{p,\nu}^{\underline{w}_1})$ of $M_{p,\nu}^{\underline{w}_1}$ that \emph{fix each }$Q_j, j=1,2,3$. Let \index{$\widetilde{\mathrm{Sympl}}(M_p, \{Q_j\}_{j=1}^3)$, $\widetilde{\mathrm{Sympl}}_0(M_p, \{Q_j\}_{j=1}^3)$, $\mathrm{Sympl}_0(M_p, \{Q_j\}_{j=1}^3)$} $\widetilde{\mathrm{Sympl}}(M_p, \{Q_j\}_{j=1}^3)$ denote the group of automorphisms of the reduced space $M_p$  induced from $\mathrm{Sympl}^{T^{\check{u}_1}}(M_{p,\nu}^{\underline{w}_1},\{Q_j\}_{j=1}^3)$. For each $Q_j, j=1,2,3$, we fix an identification between a neighborhood of 0 in $\mathbb{C}^2$ with a neighborhood of $Q_j$ in $M_{p,\nu}^{\underline{w}_1}$ as in Section \ref{blup}.1, and this induces an identification between a neighborhood of 0 in the reduced space $\mathbb{C}$ with a neighborhood of the image of $Q_j$, which we will denote by $Q_j$ as well, in $M_p$.

\begin{lemma}\label{Sympl(M_p)}
$\widetilde{\mathrm{Sympl}}(M_p, \{Q_j\}_{j=1}^3)$ is contractible. 
\end{lemma}
\begin{proof}
Step 1. \emph{A fibration $\widetilde{\mathrm{Sympl}}(M_p, \{Q_j\}_{j=1}^3)\rightarrow (Sp(4)^{S^1}/C_0)^3$.}\\
There is an obvious group homomorphism 
\begin{equation}\label{fib-S^1}
\widetilde{\mathrm{Sympl}}(M_p, \{Q_j\}_{j=1}^3)\rightarrow(Sp(4)^{S^1}/C_0)^3 ,
\end{equation}
by sending each automorphism to the tangent maps (modulo $C_0$)  at $0\in\mathbb{C}^2$ any of its lifting near $Q_i, i=1,2,3$ with respect to the fixed trivializations.  Let 
$$\widetilde{\text{Sympl}}_0(M_p, \{Q_j\}_{j=1}^3)=:\text{kernel of }(\ref{fib-S^1}),$$
 It is easy to see that (\ref{fib-S^1}) is a principal $\widetilde{\text{Sympl}}_0(M_p, \{Q_j\}_{j=1}^3)$-bundle. 
 
 Let 
 $$\text{Sympl}_0(M_p, \{Q_j\}_{j=1}^3):=\{\varphi\in\text{Sympl}(M_p, \{Q_j\}_{j=1}^3): (d\varphi)_{Q_j}=id, j=1,2,3\},$$
 where \index{$\text{Sympl}(M_p, \{Q_j\}_{j=1}^3)$}$\text{Sympl}(M_p, \{Q_j\}_{j=1}^3)$ is the true symplectomorphism group of $M_p$ fixing the three special points.
The next step shows that $\widetilde{\text{Sympl}}_0(M_p, \{Q_j\}_{j=1}^3)$ is homotopy equivalent 
$\text{Sympl}_0(M_p, \{Q_j\}_{j=1}^3)$.

Step 2.  $\widetilde{\text{Sympl}}_0(M_p, \{Q_j\}_{j=1}^3)\simeq \text{Sympl}_0(M_p, \{Q_j\}_{j=1}^3)$.\\
Let \index{$\mathrm{Sympl}^{T^{\check{u}_1}}_\sharp(M_{p,\nu}^{\underline{w}_1}, \{Q_j\}_{j=1}^3)$}$\mathrm{Sympl}^{T^{\check{u}_1}}_\sharp(M_{p,\nu}^{\underline{w}_1}, \{Q_j\}_{j=1}^3)$
be the subgroup in $\mathrm{Sympl}^{T^{\check{u}_1}}(M_{p,\nu}^{\underline{w}_1}, \{Q_j\}_{j=1}^3)$ consisting of elements $\hat{\phi}$ such that $\hat{\phi}$ restricted to a sufficiently small neighborhood of each $Q_j$ (within the fixed local chart) is the linear transformation $\begin{bmatrix}e^{-i\theta_j}& \\
 &e^{i\theta_j}
\end{bmatrix}$, for some $\theta_j\in [0,2\pi)$. Also let $\widetilde{\mathrm{Sympl}}_\sharp(M_p, \{Q_j\}_{j=1}^3)$ be the image of $\mathrm{Sympl}^{T^{\check{u}_1}}_\sharp(M_{p,\nu}^{\underline{w}_1}, \{Q_j\}_{j=1}^3)$ in $\widetilde{\mathrm{Sympl}}(M_p, \{Q_j\}_{j=1}^3)$.

Now we can construct a deformation retraction from the group $\widetilde{\text{Sympl}}_0(M_p, \{Q_j\}_{j=1}^3)$ to $\widetilde{\mathrm{Sympl}}_\sharp(M_p,\{Q_j\}_{j=1}^3)$. Near $Q_j$, the graph of  $\begin{bmatrix}e^{i\theta_j}& \\
 &e^{-i\theta_j}
\end{bmatrix}\circ\hat{\phi}$ is a Lagrangian in $(\mathbb{C}^2)^-\times\mathbb{C}^2\cong T^*\Delta_{\mathbb{C}^2}$, which is tangent to the zero section at $((Q_j, Q_j),0)\in T^*\Delta_{\mathbb{C}^2}$. Equivalently, in a smaller neighborhood of $(0,0)$, with respect to an appropriate Darboux coordinate system, it is the graph of the differential of a $\{\begin{bmatrix}e^{-i\theta}& \\
 &e^{i\theta}
\end{bmatrix}\}$-equivariant function $f_j$ with $Df_j(0)=0$ and $D^2f_j(0)=0$, where  $(Q_j, Q_j)$ is regarded as the origin in $\Delta_{\mathbb{C}^2}\cong \mathbb{C}^2$. Let $\mathrm{r}(\mathbf{z})=\|\mathbf{z}\|^2$ and fix a small ball $ B_j(\epsilon)=\{\mathrm{r}<\epsilon^2\}\subset \Delta_{\mathbb{C}^2}$, and let $\mathbb{D}_j(\frac{1}{16})\subset B_j(\epsilon)$ be the connected component containing 0 where $|D^2f_j|<\frac{1}{16}$. Here for a function $f$ on a domain, we adopt the following notations
$$|D^2f|=:\sup\limits_{x}\sum\limits_{m,n}|\frac{\partial^2f}{\partial x_m\partial x_n}(x)|,\ |Df|=:\sup\limits_{x}\sum\limits_n|\frac{\partial f}{\partial x_n}(x)|.$$ Now let $\epsilon_0=\sup\{\epsilon\in\mathbb{R}_+: B_j(\epsilon)\subset \mathbb{D}_j(\frac{1}{16})\} $, then we have $|Df_j|<\frac{1}{16}\epsilon_0$ and $|f_j|<\frac{1}{16}\epsilon_0^2$ on $B_j(\epsilon_0)$, if we make $f_j(0)=0$. Consider a $C^\infty$-function $b_{j,\epsilon_0}(x_1, x_2)$ on the square $[0, \epsilon_0^2)\times (-\frac{1}{16}\epsilon_0^2, \frac{1}{16}\epsilon_0^2)$ satisfying $b_{j,\epsilon_0}(x_1, x_2)=0$ for $|x_1|<\frac{1}{32}\epsilon_0^2$, $b_{j,\epsilon_0}(x_1,x_2)=x_2$ for $|x_1|>\frac{31}{32}\epsilon_0^2$, $b_{j,\epsilon_0}(x_1,0)=0$, and 
\begin{equation}\label{require}
|D^2b_{j,\epsilon_0}|(|D\mathrm{r}|^2+2|D\mathrm{r}|\cdot|Df_j|+|Df_j|^2)+|D_{x_1}b_{j,\epsilon_0}|\cdot|D^2\mathrm{r}|+|D_{x_2}b_{j,\epsilon_0}|\cdot |D^2f_j|<\frac{5}{6}.
\end{equation}
Then the graph of the differential of 
$$\begin{bmatrix}e^{-i\theta_j}& \\
 &e^{i\theta_j}
\end{bmatrix}\circ(s\cdot b_{j,\epsilon_0}\circ (\mathrm{r}, f_j)+(1-s)\cdot f_j)|_{B_j(\epsilon_0)}, 0\leq s\leq 1,$$
which is clearly $\{\begin{bmatrix}e^{-i\theta}& \\
 &e^{i\theta}
\end{bmatrix}\}$-equivariant, glues well with the graph of $\hat{\phi}$ outside of $B_j(\sqrt{\frac{31}{32}}\epsilon_0)$, and gives a family $\{\hat{\phi}_s\}_{s\in[0,1]}$ whose induced maps on $M_p$ lie in $\widetilde{\text{Sympl}}_0(M_p, \{Q_j\}_{j=1}^3)$, with $\hat{\phi}_0=\hat{\phi}$ and $\hat{\phi}_1\in \mathrm{Sympl}^{T^{\check{u}_1}}_\sharp(M_{p,\nu}^{\underline{w}_1},\{Q_j\}_{j=1}^3)$. Note that for (\ref{require}), if we start with $\epsilon$ small enough, then $|D_{x_1}b_{j,\epsilon_0}|<\frac{1}{12}$, $|D_{x_2}b_{j,\epsilon_0}|<2$ and $|D^2b_{j,\epsilon_0}|<5$, for instance, are sufficient for it to hold. We can fix such a small $\epsilon$ once for all, and make $b_{j,\epsilon_0}$ continuously depend on $\epsilon_0$ (in the $C^\infty$-topology). Thus we have a deformation retraction from $\widetilde{\text{Sympl}}_0(M_p, \{Q_j\}_{j=1}^3)$ to $\widetilde{\mathrm{Sympl}}_\sharp(M_p,\{Q_j\}_{j=1}^3)$. Similarly, we can easily show that $\mathrm{Sympl}_0(M_p, \{Q_j\}_{j=1}^3)$ deformation retracts onto $\widetilde{\mathrm{Sympl}}_\sharp(M_p,\{Q_j\}_{j=1}^3)$.

Step 3. \emph{$\widetilde{\mathrm{Sympl}}(M_p, \{Q_j\}_{j=1}^3)$ is contractible.} \\
 There is a natural fiber bundle
 \[\xymatrix{\text{Sympl}_0(M_p, \{Q_j\}_{j=1}^3)\ar[r] &\text{Sympl}(M_p, \{Q_j\}_{j=1}^3) \ar[d]\\
  &(Sp(2))^3
 }
 \]
 by the same construction as in (\ref{fib-S^1}).
 By standard results (c.f. \cite{EE67}), $\text{Sympl}(M_p, \{Q_j\}_{j=1}^3)$ is contractible, therefore $$B\text{Sympl}_0(M_p, \{Q_j\}_{j=1}^3)\simeq (Sp(2))^3\simeq (S^1)^3.$$
 In particular, the preimage of the fiber bundle over $(S^1)^3\cong (U(1))^3\subset (Sp(2))^3$, for which we will denote by \index{$\text{Sympl}^\dagger(M_p, \{Q_j\}_{j=1}^3)$}$\text{Sympl}^\dagger(M_p, \{Q_j\}_{j=1}^3)$, is homotopy equivalent to $\text{Sympl}(M_p, \{Q_j\}_{j=1}^3)$ via the inclusion. 
 
 On the other hand, there is an inclusion\footnote{To be more rigorous, one should replace $\text{Sympl}^\dagger(M_p, \{Q_j\}_{j=1}^3)$ by $\text{Sympl}^\dagger(M_p, \{Q_j\}_{j=1}^3)\cap \widetilde{\mathrm{Sympl}}(M_p, \{Q_j\}_{j=1}^3)$ for the inclusion, but the resulting space is homotopy equivalent to $\text{Sympl}^\dagger(M_p, \{Q_j\}_{j=1}^3)$, by the same technique in Step 2.} of the fibration involving $\text{Sympl}^\dagger(M_p, \{Q_j\}_{j=1}^3)$ into the fibration  (\ref{fib-S^1}). Since $Sp(4)^{S^1}/C_0\simeq S^1$ by Lemma \ref{angle} (c), using Step 2 and standard facts about classifying spaces, we deduce that $\widetilde{\mathrm{Sympl}}(M_p, \{Q_j\}_{j=1}^3)$ must be  contractible as well.

\end{proof}

Let 
$$\gamma_p: \mathrm{Sympl}^{T^{\check{u}_1}}(M_{p,\nu}^{\underline{w}_1}, \{Q_j\}_{j=1}^3)/C^\infty((-\nu,\nu), T^{\check{u}_1})\rightarrow\widetilde{\mathrm{Sympl}}(M_p, \{Q_j\}_{j=1}^3)$$
be the projection map. 
\begin{prop}\label{Sympl_contr}
$\gamma_p$ is a homotopy equivalence, hence $$\mathrm{Sympl}^{T^{\check{u}_1}}(M_{p,\nu}^{\underline{w}_1}, \{Q_j\}_{j=1}^3)/C^\infty((-\nu,\nu), T^{\check{u}_1})$$ is contractible. 
\end{prop}
\begin{proof}
First, we have the following commutative diagram 
\begin{equation}\label{fibrations}
\xymatrix{\mathrm{Sympl}^{T^{\check{u}_1}}(M_{p,\nu}^{\underline{w}_1}, \{Q_j\}_{j=1}^3)/C^\infty((-\nu,\nu), T^{\check{u}_1})\ar[d]^{ }  \ar[r]^{\ \ \ \ \ \ \ \ \ \ \ \ \ \gamma_p}&\widetilde{\mathrm{Sympl}}(M_p, \{Q_j\}_{j=1}^3)\ar[d]^{ }\\
(Sp(4)^{S^1}/C_0)^3\ar[r]^{id}&(Sp(4)^{S^1}/C_0)^3
},
\end{equation}
where the vertical arrows are both the restriction of the tangent maps at $Q_j, j=1,2,3$ (modulo $C$), and they give two fiber bundles. The kernel of the left map is the subgroup in  $\mathrm{Sympl}^{T^{\check{u}_1}}(M_{p,\nu}^{\underline{w}_1}, \{Q_j\}_{j=1}^3)/C^\infty((-\nu,\nu), T^{\check{u}_1})$ consisting of all liftings of elements in $\widetilde{\text{Sympl}}_0(M_p, \{Q_j\}_{j=1}^3)$ via $\gamma_p$.
The proof of Lemma \ref{Sympl(M_p)} shows that this group deformation retracts onto all liftings of elements in $\widetilde{\text{Sympl}}_\sharp(M_p, \{Q_j\}_{j=1}^3)$. 
We will apply the technique of real blow-ups to show that the latter subgroup deformation retracts onto $\widetilde{\text{Sympl}}_\sharp(M_p, \{Q_j\}_{j=1}^3)$. Therefore $\gamma_p$ is a homotopy equivalence. In the following, we will keep using the notations from the proof of Lemma \ref{Sympl(M_p)}.

For any $\phi\in \widetilde{\mathrm{Sympl}}_\sharp(M_p,\{Q_j\}_{j=1}^3)$, let $B_j, j=1,2,3$ be a small ball around $Q_j$ in $M_{p,\nu}^{\underline{w}_1}$ on which one of the liftings $\hat{\phi}$ is the linear transformation $\begin{bmatrix}e^{-i\theta_j}& \\
 &e^{i\theta_j}
\end{bmatrix}$ for some $\theta_j$. For $\epsilon, \delta>0$ small enough, the surgery for the real blow-up to $Bl_{\epsilon,\delta}(M_{p,\nu}^{\underline{w}_1})$ around each $Q_j$ is taken within a smaller ball $B'_j\subset B_j, j=1,2,3$ satisfying $\overline{B'}_j\subset B_j$, and we denote the resulting moment map for $T^{\check{u}_1}$ by 
\index{$\bar{\mu}_{\epsilon,\delta}$, $Bl_{\epsilon, \delta}(M_{p,\nu}^{\underline{w}_1})$}$$\bar{\mu}_{\epsilon,\delta}: Bl_{\epsilon, \delta}(M_{p,\nu}^{\underline{w}_1})\rightarrow \mathbb{R},$$
where $\bar{\mu}_{\epsilon,\delta}$ is regular over $(-\nu, \delta)$. Since we are only interested in $\bar{\mu}_{\epsilon,\delta}^{-1}(-\nu, \delta)$, in the following we will use the same notation  $Bl_{\epsilon, \delta}(M_{p,\nu}^{\underline{w}_1})$ to denote this submanifold. Clearly $\hat{\phi}$ induces a symplectomorphism $\hat{\phi}_{\epsilon,\delta}$ on $Bl_{\epsilon, \delta}(M_{p,\nu}^{\underline{w}_1})$, whose restriction to the blow-up region near $Q_j$ is the action by $\exp(\theta_j\check{u}_1)$. Conversely, given any $\hat{\phi}_{\epsilon,\delta}$ on $Bl_{\epsilon, \delta}(M_{p,\nu}^{\underline{w}_1})$ of this form, we can recover $\hat{\phi}$ on $(\mu_{p,\nu}^{\underline{w}_1})^{-1}(-\nu, \delta)$. 

Now we can describe the space of all liftings of $\widetilde{\mathrm{Sympl}}_\sharp(M_p,\{Q_j\}_{j=1}^3)$ in $\mathrm{Sympl}^{T^{\check{u}_1}}_\sharp(M_{p,\nu}^{\underline{w}_1}, \{Q_j\}_{j=1}^3)$ as a direct limit of spaces $X_{\epsilon,\delta}$ over $(\epsilon,\delta)\in(\mathbb{R_+})^2$, where we have a natural inclusion $X_{\epsilon_1,\delta_1}\hookrightarrow X_{\epsilon_2,\delta_2}$, when $\epsilon_1>\epsilon_2$ and $\delta_1>\delta_2$. The space $X_{\epsilon,\delta}$ consists of $\hat{\phi}$ whose restriction to a neighborhood of the three blow-up regions for $Bl_{\epsilon, \delta}(M_{p,\nu}^{\underline{w}_1})$ near each $Q_j$ is given by the action of $\exp(\theta_j\check{u}_1)$ for some $\theta_j\in\mathbb{R}$. By Proposition \ref{equiv_sympl}, after trivializing the reduced spaces of $\bar{\mu}_{\epsilon,\delta}$ over $(-\nu, \delta)$ and the reduced spaces\footnote{The trivialization for any $\delta$ determines a trivialization for all $\delta'<\delta$, so we can fix a uniform trivialization for $\delta$ less than a fixed $\delta_0$.} of $\mu_{p,\nu}^{\underline{w}_1}$ over $(0, \nu)$, we see that $X_{\epsilon,\delta}$ has a free $C^\infty((-\nu, \nu), T^{\check{u}_1})$-action, and $X_{\epsilon,\delta}/C^\infty((-\nu, \nu), T^{\check{u}_1})$ corresponds to the space of pairs of paths $(\rho_1, \rho_2)$, where $\rho_1: (-\nu, \delta)\rightarrow \mathrm{Sympl}(S^2)$, $\rho_2: (0, \nu)\rightarrow \mathrm{Sympl}(S^2)$ satisfy that $\rho_1$ restricts to the identity on a neighborhood of the blowing up loci and $\rho_1|_{(0,\delta)}$ is identified with $\rho_2|_{(0,\delta)}$ after the blowing down map. Then  $(\lim\limits_{\longrightarrow}X_{\epsilon,\delta})/C^\infty((-\nu, \nu), T^{\check{u}_1})$ deformation retracts onto $ \widetilde{\mathrm{Sympl}}_\sharp(M_p,\{Q_j\}_{j=1}^3)$, by deforming $\rho_1$ to the constant path determined by $\rho_1(0)$.

\end{proof}

\subsubsection{A deformation retraction for $\mathrm{ker}\beta_G$ supported near $T^*_\mathcal{B}\mathcal{B}$}

We start with a general set-up for the statement of Lemma \ref{normal} below. Let $(X,\omega_X)$ be a K$\ddot{\text{a}}$hler manifold, and $X^-\times X$ be equipped with the symplectic form $\omega_0=(-\omega_X)\times \omega_X$.  Let $N_{\Delta_X}$ be the normal bundle to the diagonal with respect to the K$\ddot{\text{a}}$hler metric $g\times g$ (which is the anti-diagonal in the tangent bundle restricted to $\Delta_X$). Then the product symplectic form gives a natural identification of $N_{\Delta_X}$ with $T^*\Delta_X$, thus induces a symplectic form $\omega_1$ on  $N_{\Delta_X}$. By the Lagrangian tubular neighborhood theorem, there is a symplectomorphism mapping a tubular neighborhood of the zero section in $T^*\Delta_X$ to a tubular neighborhood of $\Delta_X$ in $X^-\times X$, which fixes each point in $\Delta_X$. We state a slightly stronger statement in the following lemma.

\begin{lemma}\label{normal}
There exists a symplectomorphism $\psi$ from a tubular neighborhood $N^\epsilon_{\Delta_X}$ of the zero section in $N_{\Delta_X}$  to a tubular neighborhood $U_{\epsilon}(\Delta_X)$ of $\Delta_X$ in $X^-\times X$, such that $\psi|_{\Delta_X}=id$ and $d\psi|_{\Delta_X}=id$.
\end{lemma}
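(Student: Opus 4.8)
The plan is to run Moser's deformation argument relative to the diagonal $\Delta_X$, taking care to choose the initial diffeomorphism so that its $1$-jet along $\Delta_X$ is already trivial, and to produce the primitive in the relative Poincar\'e lemma with enough vanishing along $\Delta_X$ that the Moser flow has trivial $1$-jet there as well. First I would let $\Psi_0$ be the Riemannian exponential map of the product metric $g\times g$, restricted to the normal bundle $N_{\Delta_X}$; for $\epsilon$ small it is a diffeomorphism from a tubular neighborhood $N^\epsilon_{\Delta_X}$ of the zero section onto a tubular neighborhood $U_\epsilon(\Delta_X)$ of $\Delta_X$ in $X^-\times X$. It restricts to $\mathrm{id}$ on $\Delta_X$, and --- this is why I use the exponential map rather than an arbitrary tubular-neighborhood chart --- under the canonical splitting $T_{(p,0)}N_{\Delta_X}\cong T_p\Delta_X\oplus N_p$ along the zero section, its differential is the identity onto $T_p\Delta_X\oplus N_p\subset T_p(X^-\times X)$; so $d\Psi_0|_{\Delta_X}=\mathrm{id}$ under the natural identifications of the relevant tangent spaces.

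Next I would check that $\omega':=\Psi_0^*\omega_0$ and $\omega_1$ agree as $2$-forms at every point of the zero section. Since $\omega_0=(-\omega_X)\times\omega_X$ vanishes both on the diagonal $\{(v,v)\}=T_p\Delta_X$ and on the anti-diagonal $\{(v,-v)\}=N_p$, and pairs them nondegenerately by $\omega_0\big((v,v),(w,-w)\big)=-2\,\omega_X(v,w)$, a direct computation shows that $\omega_0|_{T_p(X^-\times X)}$, written in the splitting $T_p\Delta_X\oplus N_p$, is exactly the canonical symplectic form of $T^*\Delta_X$ carried over through the isomorphism $N_{\Delta_X}\cong T^*\Delta_X$ used to define $\omega_1$; that isomorphism is set up precisely so that $\omega_0|_{\Delta_X}$ matches $\omega_1$ along the zero section. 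Together with $d\Psi_0|_{\Delta_X}=\mathrm{id}$ this yields $\omega'|_{\Delta_X}=\omega_1|_{\Delta_X}$, so $\eta:=\omega'-\omega_1$ is closed and vanishes as a $2$-form at each point of $\Delta_X$, and after shrinking $\epsilon$ the family $\omega_t:=(1-t)\,\omega_1+t\,\omega'$ is a path of symplectic forms on $N^\epsilon_{\Delta_X}$, $t\in[0,1]$.

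Then I would apply the order-refined relative Poincar\'e lemma: because $\eta$ is closed and vanishes to order $\ge 1$ along $\Delta_X$, the usual homotopy operator built from the radial retraction of the tubular neighborhood produces a primitive $\sigma$ with $d\sigma=\eta$ that vanishes to order $\ge 2$ along $\Delta_X$ (contracting $\eta$ against the radial vector field, which vanishes on $\Delta_X$, and then rescaling along the fibers, picks up the extra order of vanishing). Defining $X_t$ by $\iota_{X_t}\omega_t=-\sigma$, the field $X_t$ also vanishes to order $\ge 2$ along $\Delta_X$; hence its flow $\phi_t$ is defined for $t\in[0,1]$ on a possibly smaller neighborhood of the zero section, fixes $\Delta_X$ pointwise, and satisfies $d\phi_t|_{\Delta_X}=\mathrm{id}$ because the linearized flow along $\Delta_X$ is driven by the Jacobian of $X_t$, which vanishes on $\Delta_X$. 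The standard Moser computation gives $\phi_1^*\omega'=\omega_1$, so $\psi:=\Psi_0\circ\phi_1$ satisfies $\psi^*\omega_0=\omega_1$, $\psi|_{\Delta_X}=\mathrm{id}$, and $d\psi|_{\Delta_X}=\mathrm{id}$; shrinking $\epsilon$ once more so that $\psi(N^\epsilon_{\Delta_X})\subset U_\epsilon(\Delta_X)$ finishes the proof.

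The one genuinely non-routine point is the interplay of the last two steps: one must verify that $\Psi_0^*\omega_0$ and $\omega_1$ coincide as $2$-forms at the points of $\Delta_X$ --- not merely that their pullbacks to $\Delta_X$ agree --- which is what forces the use of the metric exponential (with its tautological trivialization of $N_{\Delta_X}$ along the zero section) as the starting diffeomorphism, and then one must run the Poincar\'e lemma in its order-preserving form to upgrade ``$\sigma$ vanishes on $\Delta_X$'' to ``$\sigma$ vanishes to second order on $\Delta_X$'', which is exactly what kills the $1$-jet of the Moser flow. When $X$ is noncompact one restricts all constructions to relatively compact pieces, or lets the tubular-neighborhood radius be a positive function on $\Delta_X$; since $X_t\to 0$ along $\Delta_X$ this presents no additional difficulty.
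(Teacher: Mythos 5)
Your proof is correct and follows the paper's global strategy --- use the exponential map of $g\times g$ as the initial identification and then run Moser's trick --- but you handle the $1$-jet of the Moser flow in a genuinely different, and cleaner, way. The paper shows that $\eta := \psi^*\omega_0 - \omega_1$ vanishes to order strictly higher than $1$ along $\Delta_X$ (the $o(t)$ estimate), by explicitly computing the pushforwards of normal and horizontal vectors under $\exp$ in terms of Jacobi fields, comparing them to parallel transports, and invoking the K\"ahler identity $\nabla\omega_X = 0$ to carry $\omega_X$ along geodesics. You instead verify only the weaker fact that $\eta$ vanishes pointwise as a $2$-form along $\Delta_X$, which is a linear-algebra computation at the zero section using $d\Psi_0|_{\Delta_X}=\mathrm{id}$ and the Lagrangian/anti-Lagrangian splitting $T_p\Delta_X\oplus N_p$, and then you gain the needed extra order from the radial homotopy operator: contraction against the Euler vector field, which itself vanishes on $\Delta_X$, raises the vanishing order by one, so the primitive $\sigma$ and hence the Moser field $X_t$ vanish to order $\ge 2$, forcing $d\phi_t|_{\Delta_X}=\mathrm{id}$. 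This avoids the Jacobi-field estimates entirely and, notably, never uses the K\"ahler hypothesis: your argument proves the lemma for any symplectic $(X,\omega_X)$ with a compatible metric, isolating the source of the extra order (the Euler field, not the curvature or complex structure of $X$). Both proofs are valid; yours is the more standard and more portable form of the refined Weinstein neighborhood theorem, while the paper's gives a sharper explicit estimate on $\eta$ that is, strictly speaking, more than the Moser argument requires.
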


\begin{proof}
We first identify $N^\epsilon_{\Delta_X}$ with $U_{\epsilon}(\Delta_X)$ using the exponential map $\psi$ with respect to $g\times g$. Then it suffices to show that $\|\psi^*\omega_0-\omega_1\|_{((x,x), (tv,-tv))}\sim o(t)$ for any fixed $x$ and $v$, since by Moser's argument, the vector field generating an isotopy between $\psi^*\omega_0$ and $\omega_1$ will have length at most proportional to $o(t)$ in the direction of $v$, so the resulting diffeomorphism by integrating this vector field will have differential equal to the identity on the zero section. 

For any $v, u,w\in T_xX$, the push-forward of the vertical vector $(u,-u)$ and the horizontal lifting $(w(t), w(t))$ of $(w,w)\in T_{(x,x)}\Delta_X$ at $((x,x), (tv,-tv))$ to $X^-\times X$ under the exponential map is $((d\exp_x)|_{tv}(u), (d\exp_x)|_{(-tv)}(-u))$ and $(J_{w,v}(t), J_{w,-v}(t))$ respectively, where $J_{w,v}(t)$ denotes for the Jacobi vector field for the family of geodesics $\exp_{\exp_x(\tau w)}(t\Gamma(\exp_x(s w))_0^\tau(v))$, where $\Gamma(\exp_x(s w))_0^\tau$ means the parallel transport along the geodesic $\exp_x(s w)$ from time 0 to time $\tau$. 

The K$\ddot{\text{a}}$hler property implies that the covariant derivative $D_{\exp_x(tv)}(\omega)=0$, thus 
$$\omega(\Gamma(\exp_x(s v))_0^t(u), \Gamma(\exp_x(s v))_0^t(w))=\omega(u,w).$$ Now we only need to show that 
\begin{align*}
&\|(d\exp_x)|_{tv}(u)-\Gamma(\exp_x(s v))_0^t(u)\|\sim o(t),\\
& \|J_{w,v}(t)-\Gamma(\exp_x(s v))_0^t(w)\|\sim o(t),\\
&\omega_1(w_1(t), w_2(t))=\omega_1(w_1, w_2), \text{ for any two vectors }w_1,w_2\in T_x X.
\end{align*}
These properties hold for any Riemannian manifold $X$. In fact, one can take the geodesic coordinate at $x$, and use the fact that the Christoffel symbols vanish at $x$ to deduce that the covariant derivative of the first two of the above vectors along $\exp_x(tv)$ has norm $o(1)$. One can prove the last equality similarly.
\end{proof}

Let \index{$\mathrm{ker}\beta_G^\sharp$}$\mathrm{ker}\beta_G^\sharp$ be the subgroup of $\mathrm{ker}\beta_G$ consisting of $\varphi$ that restricts to the identity in a neighborhood of $T_\mathcal{B}^*\mathcal{B}$.
\begin{lemma}\label{id_zero}
\begin{itemize}
\item [(1)]For any $\varphi\in \ker\beta_G$, after sufficient conjugation by the conical dilations on $T^*\mathcal{B}$, the tangent space of $\mathrm{graph}(\varphi)$ can be made arbitrarily close to the tangent space of $\Delta_{T^*\mathcal{B}}$ along $\Delta_{\mathcal{B}}$.
\item [(2)] There is a deformation retraction from $\mathrm{ker}\beta_G$ to $\mathrm{ker}\beta_G^\sharp$. 
\end{itemize}
\end{lemma}
\begin{proof}
If we conjugate $\varphi$ by the dilation action $\delta_\lambda$ on $T^*\mathcal{B}$, i.e. we define
$$\varphi_\lambda(x, \xi)=\delta_{\lambda^{-1}}(\varphi(x, \lambda\xi))$$
then the limit as $\lambda\rightarrow 0^+$ of the tangent space of the graph of $\varphi_\lambda$ is the tangent space of $\Delta_{T^*\mathcal{B}}$ along $\Delta_\mathcal{B}$. To see this, we just need to check that for any curve $(x, t\xi), t\in [0,1]$, 
$$\lim\limits_{t\rightarrow 0}\frac{\mathrm{Dist}(\varphi_\lambda(x, t\xi),(x,t\xi))}{t}$$
uniformly approaches $0$ as $\lambda\rightarrow 0^+$, for all $\xi$ with $|\xi|=1$. Here $\mathrm{Dist}(-,-)$ denotes the distance between any two points with respect to any fixed metric on $T^*\mathcal{B}$. Let $(x(t), \xi(t))$ be a smooth family of representatives of $\varphi(x, t\xi)$ with $x(0)=x, \xi(0)=\xi$. Then we have $\varphi_\lambda(x, t\xi)=(x(\lambda t), \lambda^{-1}\xi(\lambda t))$. First, we have
$$\lim\limits_{t\rightarrow 0}\frac{\mathrm{Dist}(x(\lambda t),x)}{t}=\lim\limits_{t\rightarrow 0}\frac{\mathrm{Dist}(x(\lambda t),x)}{\lambda t}\lambda=|x'(0)|\lambda$$
(note that $x'(0)$ is regarded as an element in $\mathfrak{g}/\mathfrak{t}$), so this is uniformly approaching 0 as $\lambda\rightarrow 0^+$.
Second, using the fact that 
$$ta=:tx\xi x^{-1}=x(t)\xi(t) x(t)^{-1},$$
we have 
\begin{align*}
&\lim\limits_{t\rightarrow 0}\frac{\lambda^{-1}\xi(\lambda t)-t\xi}{t}=\lim\limits_{t\rightarrow 0}\frac{x(\lambda t)^{-1}ta x(\lambda t)-tx^{-1}ax}{t}\\
=&\lim\limits_{t\rightarrow 0}x(\lambda t)^{-1}a x(\lambda t)-x^{-1}ax=0.
\end{align*}
  Now (1) easily follows. 
(2) is a direct consequence of (1), Lemma \ref{normal} and a similar argument as in Lemma \ref{Sympl(M_p)} to give a deformation retraction. 


\end{proof}

\subsection{$\mathrm{ker}\beta_G$ is contractible}\label{main_pf}
This section is devoted to the proof that $\mathrm{ker}\beta_G$ is contractible. By Lemma \ref{id_zero}, we only need to prove that $\mathrm{ker}\beta_G^\sharp$ is contractible.
In the following, we identity $M_{s\cdot p}$ with $M_{p}$ for all $s>0$ using the $\mathbb{R}_+$-action, where $p$ is usually reserved for denoting any fixed element in $\mathbb{R}_+\cdot \underline{w}_1$, unless otherwise specified. 

Let $\sigma$ denote the projection of the subregular Springer fibers in $M_p$ (cf. Lemma \ref{codim1}), and let $T_0$ be the union of $\sigma$ with its image under the right Weyl group action on $T^*\mathcal{B}$ (induced from the right $\mathbf{W}$-action on $G/T$). Fix an open tubular neighborhood of $\sigma$ with a smooth boundary in $M_p$ and denote it by $\mathcal{U}_\sigma$. We assume that $T_0\pitchfork \partial\mathcal{U}_\sigma=\{P_1, P_3\}$, where $P_1$ (resp. $P_3$) is near $Q_1$ (resp. $Q_3$); see Figure \ref{shrink}. Using a fixed trivialization of the reduced spaces over $(-\nu, \delta_0)$ of the blow-up $Bl_{\epsilon, \delta_0}(M_{p,\nu}^{\underline{w}_1})$ as in the proof of Proposition \ref{Sympl_contr}, for some $\delta_0$ sufficiently small, we can choose a family of open sets $\mathcal{U}_{\sigma,c}$ with \index{$\mathcal{U}_{\sigma}$, $\mathcal{U}_{\sigma,c}$, $\phi_{X_{\sigma,c}}^{t}$}$\mathcal{U}_{\sigma,0}=\mathcal{U}_{\sigma}$ in $M_{p+c\cdot u_1}$ for $c\in (-\nu, \delta_0)$, after applying the blowing down map. We also choose a family of vector fields $X_{\sigma,c}$ on a neighborhood of $\overline{\mathcal{U}}_{\sigma, c}$ whose horizontal lifting to $M_{p,\nu}^{\underline{w}_1}$ can be lifted further to a smooth vector field on $Bl_{\epsilon, \delta_0}(M_{p,\nu}^{\underline{w}_1})$ which vanishes on $\{s\leq 0\}$ in the local coordinates as in Section \ref{blup}.1. We require that the time $t$ flow $\phi_{X_{\sigma,c}}^{t}$ of $X_{\sigma,c}$ scales the symplectic area of $\mathcal{U}_{\sigma,c}$ by $e^{-t}$, and when $c=0$, it is tangent to $T_0$ on the portion connecting $P_1$ (resp. $Q_3$) and $Q_1$ (resp. $P_3$) and deformation retracts $\mathcal{U}_{\sigma}$ onto $\sigma$ (see Figure \ref{shrink} below). 

Now let 
\begin{align*}
\mathcal{S}(\overset{\circ}{C_\nu}):=&\{\varphi\in \mathrm{Sympl}^T(\mu^{-1}(\overset{\circ}{C_\nu})): \varphi \text{ preserves the Springer fibers at infinity, }\\
&\varphi=id\text{ near the vertex of } C_\nu, \text{ and it is partially compactly supported}\}.
\end{align*} 
Similarly to $\beta_G$, we have a group homomorphism
$$\beta_{G, \overset{\circ}{C}_\nu}: \mathcal{S}(\overset{\circ}{C_\nu})\longrightarrow B_\mathbf{W}.$$

\begin{lemma}\label{horizontal}
The primitive $-\mathbf{p}d\mathbf{q}$ of $\omega$ vanishes on the subregular Springer fibers and their images under the right $\mathbf{W}$-action.
\end{lemma}
\begin{proof}
Consider the line segment $x=(\begin{bmatrix}\frac{1}{\sqrt{2}}&0&\frac{1}{\sqrt{2}}\\
\frac{\cos\theta}{\sqrt{2}}&-\sin\theta&-\frac{\cos\theta}{\sqrt{2}}\\
\frac{\sin\theta}{\sqrt{2}}&\cos\theta&-\frac{\sin\theta}{\sqrt{2}}
\end{bmatrix}, \xi=\begin{bmatrix}0&\cos\theta&\sin\theta\\ \cos\theta&0&0\\ \sin\theta&0&0\end{bmatrix}), \theta\in[0,\frac{\pi}{2}]$ in $\mu^{-1}(p_3)$. By a direct calculation, we see that $-\mathbf{p}d\mathbf{q}$ restricted to this segment is zero. It is also easy to check that $-\mathbf{p}d\mathbf{q}$ restricted to any $T^{\check{u}_1}$-orbit in $\mu^{-1}(p_3)$ vanishes. Therefore, using the 
invariance of $-\mathbf{p}d\mathbf{q}$ under the $G$-action and the right $\mathbf{W}$-action, we complete the proof. 
\end{proof}

\begin{remark}\label{s_sigma}
A direct consequence of Lemma \ref{horizontal} is one can take a smooth horizontal section of the $T^{\check{\underline{w}}_1}$-bundle over $(\mu_{p,\nu}^{\underline{w}_1})^{-1}(\bigcup\limits_{|c|<\nu}\mathcal{U}_{\sigma,c})$ containing a whole subregular Springer fiber and its image under the Weyl group action (intersecting the section), with respect to the connection form $-\mathbf{p}d\mathbf{q}$. We will fix such a section and denote it by \index{$\mathbf{s}_{\sigma}$}{$\mathbf{s}_{\sigma}$}.
\end{remark}

Suppose we are given a smooth path
$$\rho: (0,\infty)\rightarrow \mathrm{Sympl}^{T^{\check{u}_1}}(M_{p,\nu}^{\underline{w}_1}, \{Q_j\}_{j=1}^3)$$
such that for $s$ sufficiently large, $\rho(s)=id$ away from a neighborhood of the subregular Springer fibers. We assume that the neighborhood deformation retracts onto the subregular Springer fibers as $s\rightarrow \infty$, and we assume that when $s$ is sufficiently large, the induced automorphism on $M_{s\cdot p_3}$ from $\rho(s)$ restricted to $\mathcal{U}_\sigma$ corresponds to the identity element in $B_\mathbf{W}$. Then we have the following. 
\begin{lemma}\label{stretch}
Given any $\rho$ as above, we can stretch the parameter space $\mathbb{R}_+$ enough so that $\rho$ can be lifted to a symplectomorphism $\varphi_{\rho}$ in $\ker\beta_{G, \overset{\circ}{C}_\nu}$.
\end{lemma}
\begin{proof}
 Note that we can always have a lifting $\varphi_{\rho}$ such that $\varphi_{\rho}=id$ away from a neighborhood of the subregular Springer fibers when $s$ is sufficiently large, but we also need to make sure that in the limit $\varphi_\rho$ sends every subregular Springer fiber into itself but not to others in the $T^{\check{\underline{w}}_1}$-orbits. By Remark \ref{s_sigma} and the proof of Proposition \ref{equiv_sympl}, we can start from a lifting $\varphi_\rho$ (as an $T^{\check{\underline{w}}_1}$-equivariant diffeomorphism) such that that $(\varphi_\rho)_s$ preserves $\mathbf{s}_\sigma$, for $s$ sufficiently large, then by (\ref{equiv_3}) we modify $(\varphi_\rho)_s$ by the gauge transformation determined by a Hamiltonian function for the Hamiltonian vector field $\frac{d}{ds}\rho(s)$ (note that $H^1(M_{p,\nu}^{\underline{w}_1},\mathbb{R})=0$). Therefore, we only need that the integral of the length of the vector field $\frac{d}{ds}\rho(s)$ along $\rho(s)(\widetilde{T}_0)$ converges to zero as $s\rightarrow \infty$, where $\widetilde{T}_0$ is any smooth lifting of $T_0$ in $M_{p,\nu}^{\underline{w}_1}$. This can be achieved by sufficiently stretching the parameter space $\mathbb{R}_+$. 
\end{proof}

Now for every $\varphi\in \ker\beta_{G, \overset{\circ}{C}_\nu}$, it is determined by a smooth path 
$${\rho}_{\varphi,\nu}: (0,\infty)\rightarrow \mathrm{Sympl}^{T^{\check{u}_1}}(M_{p,\nu}^{\underline{w}_1}, \{Q_j\}_{j=1}^3)/C^\infty((-\nu,\nu), T^{\check{u}_1}),$$
up to the action of $\mathscr{C}(\overset{\circ}{C}_\nu, T)$. 
By Lemma \ref{stretch}, it is not hard to see that the space of such paths is homotopy equivalent to the space of paths satisfying the following properties:
\begin{enumerate}
 \item[(1)] $\rho_{\varphi,\nu}(s)=id$ for $s$ sufficiently small, 
 \item[(2)] whenever $s\geq N_0$, for some fixed integer $N_0>\frac{1}{\delta_0}$, the induced map of $\rho_{\varphi,\nu}(s)$ on the reduced spaces $M_{s\cdot (p+c\cdot u_1)}, c\in(-\nu, \nu)$, is the identity for $|c|\geq\frac{1}{s}$,
\item[(3)] for $s\geq N_0$ and $|c|<\frac{1}{s}$,  the induced map of $\rho_{\varphi,\nu}(s)$ on the reduced spaces $M_{s\cdot (p+c\cdot u_1)}$ is the identity outside $\phi_{X_{\sigma,c}}^{s}(\mathcal{U}_{\sigma,c})$, and it lies in the identity component of $\widetilde{\mathrm{Sympl}}^c(\mathbb{D},3\mathrm{pts})\simeq \mathrm{Sympl}^c(\mathbb{D}, 3\mathrm{pts})$, where we have chosen a symplectic identification between $(\mathcal{U}_\sigma, \{Q_j\}_{j=1}^3)$ and a 2-disc $\mathbb{D}$ with three marked points, and $\widetilde{\mathrm{Sympl}}^c(\mathbb{D},3\mathrm{pts})$ is defined similarly as $\widetilde{\mathrm{Sympl}}(M_p, \{Q_j\}_{j=1}^3)$ to indicate the special behavior of the automorphisms near the marked points. 
\end{enumerate}
A direct consequence of Proposition \ref{Sympl_contr} and the fact that $\mathrm{Sympl}^c(\mathbb{D})\simeq *$ is the following.

\begin{figure}
\centering
  \begin{overpic}[width=2in]{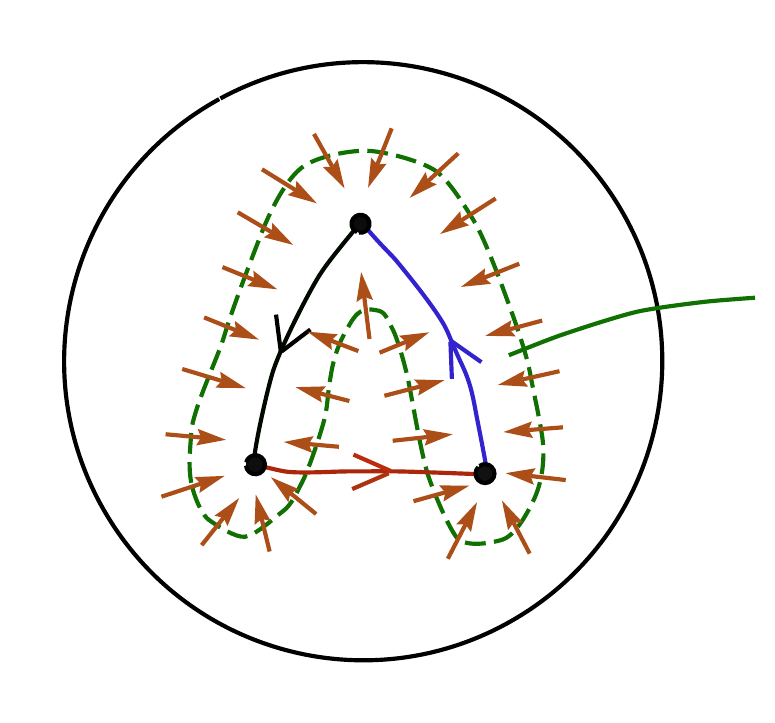}  
  \put(22,32){\small $Q_3$}
  \put(47,65){\small $Q_2$}
  \put(65,31){\small $Q_1$} 
  \put(99,53){\small $\mathcal{U}_\sigma$}
  \put(38,26){\small $P_3$}
  \put(48,26){\small $P_1$}
  \put(25,73){\small $X_\sigma$}  
  \end{overpic}
 \caption{A vector field $X_\sigma$ on $\mathcal{U}_\sigma$ shrinking it towards $\sigma$.} \label{shrink}
\end{figure}

\begin{lemma}\label{C_nu}
$\ker\beta_{G,\overset{\circ}{C}_\nu}/\mathscr{C}(\overset{\circ}{C}_\nu,T)$ is contractible.
\end{lemma}

\begin{lemma}\label{quot_triv}
The quotient map 
$$\ker\beta_{G,\overset{\circ}{C}_\nu}\rightarrow \ker\beta_{G,\overset{\circ}{C}_\nu}/\mathscr{C}(\overset{\circ}{C}_\nu,T)$$
is a trivial fiber bundle with fiber homotopy equivalent to the based loop space $\Omega_*(T)$.
\end{lemma}
\begin{proof}
The kernel of the map is obviously homotopy equivalent to $\Omega_*(T)$. We just need to show that there is a global section to the quotient map. 

For any $\varphi\in \ker\beta_{G,\overset{\circ}{C}_\nu}$, it induces an $\mathbb{R}_+$-family of $T^{\check{u}_1}$-invariant Hamiltonian vector fields on $M_{p,\nu}^{\underline{w}_1}$, where $p$ is any fixed element in $\mathbb{R}_+\cdot\underline{w}_1$, by differentiating its induced actions on $M_{s\cdot p,\nu}^{\underline{w}_1}$. So we get a family of Hamiltonian functions $H_{\varphi, s}$ on $M_{p,\nu}^{\underline{w}_1}$, by requiring that $H_{\varphi,s}(Z_0)=0$ for some chosen $Z_0\in (\mu_{p,\nu}^{\underline{w}_1})^{-1}(0)- \{Q_i\}_{i=1}^3$. On the other hand, given any $\overline{\varphi}\in \ker\beta_{G,\overset{\circ}{C}_\nu}/\mathscr{C}(\overset{\circ}{C}_\nu, T)$,
we can differentiate the actions on the reduced spaces of $T$ and get a family of Hamiltonian functions $\{H_{\overline{\varphi},s}\}_{s\in\mathbb{R}_+}$ up to the addition of a smooth function on $(-\nu,\nu)$ which has value $0$ at the origin. To fix this ambiguity, we can choose a smooth section $(-\nu, \nu)\rightarrow \bigcup\limits_{t\in (-\nu,\nu)}M_{p+t\cdot u_1}$ passing through $Z_0$, which avoids the singularities of the $T^{\check{u}_1}$-action and is submersive to the base, and require each $H_{\overline{\varphi},s}$ restricted to the zero function on this section. 

Once we have obtained $\{H_{\overline{\varphi},s}\}_{s\in\mathbb{R}_+}$, we can integrate their Hamiltonian vector fields along the radial directions and get a family of $T^{\check{u}_1}$-equivariant symplectomorphism $\widetilde{\varphi}_s$ on $M_{s\cdot p,\nu}^{\underline{w}_1}$. For $s\geq N_0$ (this is the bound as in condition (2)), on the complement to the preimage of $\phi_{X_{\sigma,c}}^s(\mathcal{U}_{\sigma,c})$ in $M_{p,\nu}^{\underline{w}_1}$, $\widetilde{\varphi}_s$ is given by an element in $C^\infty(\mathbb{R}_{(-\nu,\nu)}, T^{\check{u}_1})$. 
We can choose a global lifting of the restriction of $\widetilde{\varphi}_s$ there to $C^\infty(\mathbb{R}_{(-\nu,\nu)}, \mathbb{R}\cdot u_1)$, denoted by $\log\widetilde{\varphi}_s$, because $\ker\beta_{G,\overset{\circ}{C}_\nu}/\mathscr{C}(\overset{\circ}{C}_\nu,T)$ is contractible. Since we need $\widetilde{\varphi}_s=id$ (still on that complement) for $s$ sufficently large (we change the bound to $2N_0$), we can fix a smooth bump function $b: \mathbb{R}_+\rightarrow \mathbb{R}$, such that $b(s)=0$ for $s\leq N_0+1$ and $b(s)=1$ for $s\geq 2N_0-1$, and replace $\widetilde{\varphi}_s$ by its composition  with the exponential of $-b(s)\cdot \log\widetilde{\varphi}_s$. 
Alternatively, we can add to $\{H_{\overline{\varphi},s}\}_{s\in\mathbb{R}_+}$ a unique family of Hamiltonian functions $\{F_s(t)\}_{s\in\mathbb{R}_+}$ such that $F_s(0)=0$ for all $s$, and their Hamiltonian vector fields are $\frac{d}{ds}\exp(-b(s)\cdot \log\widetilde{\varphi}_s)$.

Next,  similarly to the proof of Lemma \ref{stretch}, we start from a $T$-equivariant diffeomorphism that lifts $\widetilde{\varphi}$ and preserves $\mathbf{s}_\sigma$, and then apply the procedure in the proof of Proposition \ref{equiv_sympl} to the family of Hamiltonian functions $\{H_{\overline{\varphi},s}+F_s(t)\}_{s\in\mathbb{R}_+}$, to get a $T$-equivariant symplectomorphism $\widetilde{\varphi}$ . We further repeat the process before to make $\varphi=id$ over $\{s\geq 2N_0\}$, away from the preimages of $\phi_{X_{\sigma,c}}^s(\mathcal{U}_{\sigma,c})$.
\end{proof}

\begin{prop}
$\mathrm{ker}\beta_G^\sharp$ is contractible. 
\end{prop}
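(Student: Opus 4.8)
The plan is to identify $\ker\beta_G^\sharp$, up to homotopy, with a path space built from $\mathrm{Sympl}(S^2)$, and to contract it using $\mathrm{Sympl}(S^2)\simeq SO(3)$ and the contractibility of $\mathrm{Sympl}_c(D^2)$. One works over the closed Weyl chamber $W$, decomposed into the two walls $\mathbb{R}_{\geq 0}w_0$, $\mathbb{R}_{\geq 0}w_2$, the singular ray $\mathbb{R}_{\geq 0}w_1$, and the two regular subcones $\overset{\circ}{W}_{01}$, $\overset{\circ}{W}_{12}$. By Lemma \ref{inv_moment} every $\varphi\in\ker\beta_G^\sharp$ preserves $\mu$, hence is determined by its restriction to the slice $\mu^{-1}(W)$ and descends to a family $\{\varphi_p\}_{p\in W}$ of $G_p$-equivariant symplectomorphisms of the reduced spaces $M_p$, with $\varphi_p=\mathrm{id}$ for $p$ near $0$. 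Over $\overset{\circ}{W}_{01}$ and $\overset{\circ}{W}_{12}$ the reduced spaces are all $\mathbb{P}^1\cong S^2$, $\mathbb{R}_+$-equivariantly trivialized by Proposition \ref{W_{01}}; near the walls $M_p$ is a point and $\mu^{-1}$ of a neighborhood is trivialized by Corollary \ref{triv_bdry}; near the singular ray one first performs the real blow-up of Section \ref{blup} and trivializes by Lemma \ref{triv_2}. On each piece Corollary \ref{equiv_sympl} and its analogues show that the fiber of $\varphi\mapsto\{\varphi_p\}$ is a torsor over a space of $C^1$ maps into the torus $T$; the normalization $\varphi=\mathrm{id}$ near $0$ rigidifies these torsors into a contractible space of such maps vanishing near the cone point. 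Thus $\ker\beta_G^\sharp$ fibers with contractible fiber over the space $\mathcal{F}$ of admissible families, and it suffices to show $\mathcal{F}$ is contractible.

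To contract $\mathcal{F}$ I would first dispose of the regular subcones and the walls. Foliating $W$ by the rays through $0$, each ray contributes to an element of $\mathcal{F}$ a path $t\mapsto\varphi_{t\cdot p}$ into the relevant symplectomorphism group starting at $\mathrm{id}$; the space of such based paths is contractible, so the restriction of $\{\varphi_p\}$ to a regular subcone, relative to its values on the two bounding rays, lives in a contractible space of paths in a path space. Over the walls the reduced space is a point, and Corollary \ref{triv_bdry} together with the identity-near-zero-section condition and $G_\mathbb{C}$-equivariance at infinity forces the limiting data into a contractible space. Consequently $\mathcal{F}$ deformation-retracts onto the subspace of families determined, through the filler just described, by their data on a neighborhood of the singular ray $\mathbb{R}_{\geq 0}w_1$.

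On $\mathbb{R}_{>0}w_1$ the reduced space is $S^2$ with the three marked points $Q_1,Q_2,Q_3$, and the hypothesis $\varphi\in\ker\beta_G$ says precisely that $\varphi_p$ fixes each $Q_j$ (with one-sided derivatives in the image of (\ref{direction})) and bounds a contractible triangle, i.e. the family over the ray lies in the contractible space $\widetilde{\mathrm{Sympl}}(M_p,\{Q_j\}_{j=1}^3)\simeq\mathcal{T}_A(M_p,\{Q_j\}_{j=1}^3)$ of Lemma \ref{Sympl(M_p)}. Passing to the real blow-up, Corollary \ref{Sympl_contr} shows the lift of this family together with its $C^1((-\nu,\nu),T^{u_1})$-phase ambiguity is again contractible, using that $\ker\gamma_p$ and $\mathrm{Sympl}^{T^{u_1}}(M_{p,\nu}^{w_1})/C^1((-\nu,\nu),T^{u_1})$ are contractible, and the normalization $\varphi=\mathrm{id}$ near $0$ preserves this. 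Gluing with the previous step over the contractible overlaps — where the Duistermaat--Heckman trivialization of a regular subcone and the blow-up trivialization of $\overset{\circ}{C}_\nu$ differ only by a contractible family of gauge transformations, exactly the $(\tfrac1n,\tfrac1n)$-bookkeeping at the end of the proof of Lemma \ref{Sympl(M_p)} in which the data is rewritten as pairs of paths into $\mathrm{Sympl}(S^2)$ — shows that $\mathcal{F}$, and hence $\ker\beta_G^\sharp$, is contractible.

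The main obstacle runs through the first and third steps: reconciling the three local normal forms (Duistermaat--Heckman over the regular subcones, the cotangent-bundle form near the walls, and the blow-up form near $w_1$) so that matching along the singular ray and the walls costs only a contractible choice, and tracking the fixed-point and contractible-triangle conditions through the real blow-up. This is where the careful $(\tfrac1n,\tfrac1n)$ bookkeeping of Lemma \ref{Sympl(M_p)} is indispensable, and contractibility of the final path space comes down to $\mathrm{Sympl}(S^2)\simeq SO(3)$ and the contractibility of $\mathrm{Sympl}_c(D^2)$.
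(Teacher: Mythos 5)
Your outline broadly tracks the paper's four-step argument (normalize near the walls, contract the data over $\overset{\circ}{C}_\nu$, fiber the rest over that core using $\mathrm{Sympl}(S^2)\simeq SO(3)$, then deal with the torus-phase ambiguity), but there are two places where your account has a genuine gap rather than just an omission of detail.

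First, you write that $\varphi\in\ker\beta_G$ ``says precisely that $\varphi_p$ fixes each $Q_j$'' and then cite the contractibility of $\mathcal{T}_A(M_p,\{Q_j\}_{j=1}^3)$ for a \emph{single} $p$. Fixing the $Q_j$ is automatic for every $G$-equivariant $\varphi$, since the $Q_j$ are exactly the $S^1$-fixed points and $\varphi$ preserves $\mu$; it is not the content of $\ker\beta_G$. The kernel condition is an asymptotic one on the \emph{whole radial family} $s\mapsto\varphi_{s\cdot w_1}$, namely that the braid it traces as $s\to\infty$ is trivial. Accordingly, the space that the paper contracts is not $\mathcal{T}_A(M_p,\{Q_j\})$ but the space $\widehat{\mathcal{T}}_{A,\mathbb{R}_+}$ of $\mathbb{R}_+$-families of parametrized triangles with the right asymptotics. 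Its contractibility (Step 1(a)) is a separate nontrivial argument, built from a shrinking flow $\mathcal{R}_s$ toward $\sigma$ and a deformation onto the compactly supported subspace $\widehat{\mathcal{T}}^c_{A,\mathbb{R}_+}$; neither follows from Lemma \ref{Sympl(M_p)} alone, and without it the ``family over the ray'' claim in your third paragraph does not close.

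Second, the final phase argument. You describe the fiber of $\varphi\mapsto\{\varphi_p\}$ as a torsor over $C^1$ maps into $T$, rigidified by the condition ``$=\mathrm{id}$ near the cone point'' into a contractible space. That is not how the contractibility works. A $C^1$ map $\overset{\circ}{W}\to T$ that is trivial near the cone point, with the asymptotic behavior inherited from $\ker\beta_G^\sharp$, packages into a path $\rho_\varphi:(0,1)\to\Omega(T^2)$; a path in $\Omega(T^2)$ with free endpoints is homotopy equivalent to $\Omega(T^2)$, which is \emph{not} contractible ($\pi_0\cong\mathbb{Z}^2$). What actually kills this is the wall constraints of Step 4: near $\mathbb{R}_+w_0$ the only admissible phases are in the subtorus $T^{\frac13 w_0}$ and near $\mathbb{R}_+w_2$ in $T^{\frac13 w_2}$, and these endpoint conditions force $\rho_\varphi$ into $\Omega^2(T^2)$, which is contractible because $T^2$ is aspherical. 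The ``vanishing near the cone point'' condition by itself is not enough, and your proposal leaves this part unproved. Relatedly, the content of the paper's Step 2 — constructing $\phi_{\Lambda_\varphi}$ from the loop $\Lambda_\varphi$ in $U(2)/\mu_3$ via Lemma \ref{normal} and the Kahler structure on $T^*(U(2)/\mu_3)$, and pushing the region where $\varphi$ agrees with $\phi_{\Lambda_\varphi}$ out to a fixed conical neighborhood of the wall — is precisely what produces those wall constraints, and it is not captured by your appeal to Corollary \ref{triv_bdry} alone.

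Apart from these two points the decomposition and the key lemmas you invoke (Corollary \ref{equiv_sympl}, Lemma \ref{Sympl(M_p)}, Corollary \ref{Sympl_contr}, Lemma \ref{triv_2}, $\mathrm{Sympl}(S^2)\simeq SO(3)$, contractibility of $\mathrm{Sympl}_c(D^2)$) are the right ones and are used in the same way as in the paper.
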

\begin{proof}
In the following,  we use $p$ to denote for a fixed element in $\mathbb{R}_+\cdot w_0$, unless otherwise specified.

Step 1. \emph{A deformation retraction of $\ker\beta_G^\sharp$ supported on (the $G$-orbits of) $\mu^{-1}(\mathrm{Ad}_{G_{p}}(\overset{\circ}{W}_{\pm\epsilon}))$}

Note that $\mathbb{R}_+\cdot w_0$ can be viewed as the image of the moment map $\mu_{w_0}$ for the $T^{\frac{1}{3}iw_0}$-action on $\mu^{-1}(\mathrm{Ad}_{G_{p}}(\overset{\circ}{W}_{\pm\epsilon}))$. Since $\varphi|_{\mu^{-1}(p)}$ is the right multiplication by an element in $U(2)/\mu_3$,  $\varphi|_{\mu^{-1}(\mathbb{R}_+\cdot w_0)}$ corresponds to a loop in $U(2)/\mu_3$. On the other hand, given any $C^\infty$-map $\Lambda: \mathbb{R}_+\cdot w_0\rightarrow U(2)/\mu_3$ with $\Lambda(s)=1$ when $s$ is close to 0  and $\lim\limits_{s\rightarrow\infty}\Lambda(s)=1$, repeating the steps in Proposition \ref{equiv_sympl} from the right action by $\Lambda(p)$ on each reduced space along $\mathbb{R}_+\cdot w_0$, we get a canonical $U(2)$-equivariant symplectomorphism $\phi_\Lambda$ on $\mu^{-1}(\mathrm{Ad}_{G_p}(\overset{\circ}{W}_{\pm\epsilon}))$, such that $\phi_\Lambda|_{\mu^{-1}(p)}=R_{\Lambda(p)}, p\in \mathbb{R}_+\cdot w_0$, where $R_{\Lambda(p)}$ means the right multiplication by $\Lambda(p)$.

Now let $\Lambda_\varphi:  \mathbb{R}_+\cdot w_0\rightarrow U(2)/\mu_3$ be the loop defined by $\varphi$, then $\phi_{\Lambda_\varphi}^{-1}\varphi$ becomes the identity on $\mu^{-1}(\mathbb{R}_+\cdot w_0)$. By Lemma \ref{w_0}, and the fact that $T^*(U(2)/\mu_3)$ has a natural K$\ddot{\text{a}}$hler structure, we can run the same argument as in Lemma \ref{id_zero} to make $\phi_{\Lambda_\varphi}^{-1}\varphi$ isotopic to the identity in a neighborhood of $\mu^{-1}(\mathbb{R}_+\cdot w_0)$. Then composing it back with $\phi_{\Lambda_\varphi}$, we get a symplectomorphism of $\mu^{-1}(\mathrm{Ad}_{G_p}(\overset{\circ}{W}_{\pm\epsilon}))$, which agrees with $\varphi$ near the boundary and is $\phi_{\Lambda_\varphi}$ near $\mu^{-1}(\mathbb{R}_+\cdot w_0)$. Since $\varphi$ is the identity map near the infinity, the region of $\phi_{\Lambda_\varphi}$ contains a conical neighborhood of $\mu^{-1}(\mathbb{R}_+\cdot w_0)$, and we can push it to contain a fixed conical neighborhood of $\mu^{-1}(\mathbb{R}_+\cdot w_0)$, say $\mu^{-1}(\overset{\circ}{W}_{\pm \frac{\epsilon}{2}})$. Similarly, we can deform $\varphi$ over $\mu^{-1}(\mathrm{Ad}_{G_{w_2}}(\overset{\circ}{W'}_{\pm \frac{\epsilon}{2}}))$ in the same way, where $W'_{\pm \frac{\epsilon}{2}}$ is the cone bounded by $\mathbb{R}_{\geq 0}\cdot (w_2\pm \frac{\epsilon}{2}\cdot \mathrm{diag}(0,1,-1))$. 

From now on, we can restrict ourselves to the space of $\varphi\in \ker \beta_G^\sharp$ where $\varphi$ restricted to $\mu^{-1}(\mathrm{Ad}_{G_{w_0}}(\overset{\circ}{W}_{\pm \frac{\epsilon}{2}}))$ is $\phi_{\Lambda_\varphi}$ and it has similar behavior over $\mu^{-1}(\mathrm{Ad}_{G_{w_2}}(\overset{\circ}{W'}_{\pm \frac{\epsilon}{2}}))$. For simplicity, we still denote this space by $\ker\beta_G^\sharp$.

Step 2. \emph{ $\ker\beta_G^\sharp$ is contractible.}
Any $\varphi\in \ker\beta_G^\sharp$ is determined by its restriction to $\mu^{-1}(\mathrm{Ad}_{G_{w_0}}(\overset{\circ}{W}_{\pm \frac{\epsilon}{2}}))$, $\mu^{-1}(\mathrm{Ad}_{G_{w_2}}(\overset{\circ}{W'}_{\pm \frac{\epsilon}{2}}))$, $\mu^{-1}(\overset{\circ}{W}_{01})$, $\mu^{-1}(\overset{\circ}{W}_{12})$ and $\mu^{-1}(\overset{\circ}{C}_\nu)$, with the obvious matching conditions. Using Lemma \ref{quot_triv}, we have $\ker\beta_G^\sharp$ is homotopy equivalent to the fiber product
\begin{align}\label{fiberprod}
&\Omega_*(U(2)/\mu_3)\times_{\mathrm{Sympl}^T(\mu^{-1}(\overset{\circ}{W}_{01}))}\Omega_*(T)\times_{\mathrm{Sympl}^T(\mu^{-1}(\overset{\circ}{W}_{12}))} \Omega_*(U(2)/\mu_3).
\end{align}
Here $\mathrm{Sympl}^T(\mu^{-1}(\overset{\circ}{W}_{ij}))$ (with the obvious restriction  as before on the vertex and infinity of $W_{ij}$) is homotopy equivalent to a fiber bundle over 
$\Omega_*(\mathrm{Sympl}(S^2))\simeq \Omega_*(SO(3))$ (c.f. \cite{Smale}) for $(i,j)=(0,1)$ and $(1,2)$ with the fiber homotopy equivalent to $\Omega_*(T)$. The restriction map $\Omega_*(T)\rightarrow \mathrm{Sympl}^T(\mu^{-1}(\overset{\circ}{W}_{ij}))$ in (\ref{fiberprod}) is homotopic to the inclusion as the fiber over the constant loop. On the other hand the restriction maps $\Omega_*(U(2)/\mu_3)\rightarrow \mathrm{Sympl}^T(\mu^{-1}(\overset{\circ}{W}_{ij}))$ for $(i,j)=(0,1)$ and $(1,2)$ are respectively induced from (in the homotopic sense) the inclusion of the sequence $\Omega_*(T^{\frac{1}{3}iw_0})\rightarrow\Omega_*(U(2)/\mu_3)\rightarrow \Omega_*(SO(3))$ (this sequence is a fibration if we replace $\Omega_*(SO(3))$ by the image of $\Omega_*(U(2)/\mu_3)$ under the quotient map), coming from the quotient map $U(2)/\mu_3\rightarrow SO(3)$ by the center $T^{\frac{1}{3}iw_0}$, via the inclusion $T^{\frac{1}{3}iw_0}\hookrightarrow T$, and $\Omega_*(T^{\frac{1}{3}iw_2})\rightarrow\Omega_*(U(2)/\mu_3)\rightarrow \Omega_*(SO(3))$ via the inclusion $T^{\frac{1}{3}iw_2}\hookrightarrow T$.
Therefore, the resulting space is contractible. 
\end{proof}

\printindex

\end{document}